\DeclareMathOperator{\rank}{rank}
\newcommand{\bfs}{\boldsymbol}
\newtheorem{theorem}{Theorem}[section]
\newtheorem{lemma}[theorem]{Lemma}
\newtheorem*{fact*}{Fact}
\newtheorem{proposition}[theorem]{Proposition}
\theoremstyle{definition}
\newtheorem{remark}[theorem]{Remark}
\numberwithin{equation}{section}
\newcommand{\N}{\mathbb N}
\newcommand{\A}{\mathbb A}
\newcommand{\F}{\mathbb F}
\newcommand{\K}{\mathbb K}
\newcommand{\Pp}{\mathbb P}
\newcommand{\fq}{\F_{\hskip-0.7mm q}}
\newcommand{\cfq}{\overline{\F}_{\hskip-0.7mm q}}
\def\ifm#1#2{\relax \ifmmode#1\else#2\fi}
\newcommand{\klk}    {\ifm {,\ldots,} {$,\ldots,$}}
\begin{document}


\title[System of diagonal equations and the subset sum problem]{An approach to the moments subset sum problem through systems of diagonal equations over finite fields}

\author[Juan Francisco Gottig]{Juan Francisco Gottig${}^{1,2}$}

\author[Mariana Pérez]{Mariana Pérez${}^{1,2}$}

\author[Melina Privitelli]{Melina Privitelli${}^{1,2}$}

\address{${}^{1}$Universidad Nacional de Hurlingham, Instituto de Tecnolog\'ia e
Ingenier\'ia\\ Av. Gdor. Vergara 2222 (B1688GEZ), Villa Tesei,
Buenos Aires, Argentina}
\email{\{francisco.gottig,mariana.perez,melina.privitelli\}@unahur.edu.ar}
\address{${}^{2}$ Consejo Nacional de Investigaciones Científicas y Técnicas (CONICET),
Ar\-gentina}

\thanks{The authors were partially supported by the grants
PIUNAHUR 2023 80020230100
003HU, PIBAA 2022-2023-28720210100460CO, PIBAA 2022-2023-28720210100422CO and UNGS
30/1146.}

\keywords{Finite fields, diagonal equations, varieties, subset sum problem, combinatorial numbers, estimates}%

\begin{abstract}
Let $\fq$ be the finite field of $q$ elements, for a given subset $D\subset \fq$, $m\in \N$, an integer $k\leq |D|$ and $\bfs{b}\in \fq^m$ we are interested in determining the existence of a subset $S\subset D$ of cardinality $k$ such that  $\sum_{a\in S}a^i=b_i$ for $i=1,\ldots, m$. This problem is known as the {\it{moment subset sum problem}} and it is $NP$-complete for a general $D$.

We make a novel approach of this problem trough algebraic geometry tools analyzing the underlying variety and employing combinatorial techniques to estimate the number of $\mathbb{F}_q$-rational points on certain varieties.

We managed to give estimates on the number of $\mathbb{F}_q$-rational points on certain diagonal equations and use this results to give estimations and existence results for the \textit{subset sum problem}.
\end{abstract}

\maketitle

\section{Introduction}

Let $\fq$ be the finite field of $q=p^s$ elements, let $D$ be a nonempty subset of $\fq$ of cardinality $|D|$ and integer $k$ with $1\leq k \leq |D|$. Let $m\in \N$ and $\bfs{b}:=(b_1,\ldots,b_m)\in \fq^m$. We denote by $N_m(k,\bfs{b},D)$ as the number of subsets $S\subset D$ with cardinal $k$ such that for $i=1,\ldots, m$, $\sum_{a\in S}a^i=b_i$, namely

$$N_m(k,\bfs{b},D):=\#\{S\subset D:\,\, \sum_{a\in S}a^i=b_i\, ,i=1,\ldots,m,\,|S|=k\}.$$

The problem is to determine whether $N_m(k,\bfs{b}, D)$ is  positive. This problem is known as the {\textit{m}}--th moment {\textit{k}}--subset sum problem ({\textit{m}}--th moment {\textit{k}}--SSP) and it has relevant applications in coding theory and cryptography. For example, when $D=\fq$ it is related to the deep hole problem of extended Reed-Solomon codes, (see, e.g., \cite{LiWa08}, \cite{Nu2019}). On the other hand, in \cite{Me1978} the authors introduced a public key cryptosystem that relies on a modified version of this problem when $m=1$. There are many results in the case $m=1$. In \cite{LiWa08} the authors provided explicit formulas for $N_1(k,\bfs{b},D)$ if $D=\fq$ or $D=\fq \setminus \{0\}$ using a combinatorial method. Furthermore, in the same article there are asymptotic formulas if $\fq \setminus D$ is small. When $D:=\{x^2: x\in \fq\}$ \cite{Wang2017} provide an explicit formula of $N_1(k,\bfs{b},D)$. In \cite{Keti2016} the authors studied the case when $D$ is the image of a Dickson polynomial of degree $d$ with parameter $a\in \fq$. The main difficulty of this problem comes from the subset $D$, specially when it lacks algebraic structure. For a general $D$ and $m=1$, it has been shown that the problem is NP-complete, see \cite{Cormen09}. On the other hand, there are not much results for $m>1$. An explicit combinatorial formula for
$N_m(k, \bfs{b},D)$ is obtained in \cite{Nu2019} when $m = 2$ and $D = \fq$. In \cite{LiWa08} the authors provided an asymptotic formula for a general $m$ and $D=\fq$. Indeed, they obtained the following estimate on the number $N_m(k,\bfs{b}):=N_m(k,\bfs{b},\fq)$.
\begin{theorem}\cite[Theorem 1.3]{Wan2010}
For all $\bfs{b} \in \fq^m$,
$$\Big|N_m(k,\bfs{b})- \frac{1}{q^m} \binom{q}{k}\Big| \leq \binom{q/p+(m-1)\sqrt{q}+k-1}{k}.$$
\end{theorem}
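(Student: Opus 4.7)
The plan is to attack this via a Fourier (additive character) expansion combined with Newton's identities and Weil's bound for character sums along polynomials, essentially the Li--Wan sieve. First I would write $k!\,N_m(k,\bfs{b})$ as the count of ordered $k$-tuples $(x_1,\dots,x_k)\in\fq^k$ with distinct entries satisfying the $m$ power-sum conditions, and open each indicator via a non-trivial additive character $\psi$ of $\fq$. This would yield
\[
k!\,N_m(k,\bfs{b})=\frac{1}{q^m}\sum_{\bfs{t}\in\fq^m}\psi(-\bfs{t}\cdot\bfs{b})\sum_{\substack{(x_1,\dots,x_k)\in\fq^k \\ \text{distinct}}}\psi\Big(\sum_{j=1}^k f_{\bfs{t}}(x_j)\Big),
\]
where $f_{\bfs{t}}(X)=t_1X+\cdots+t_mX^m$, and the $\bfs{t}=0$ term would contribute the main term $k!\binom{q}{k}/q^m$ exactly.

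Next, for $\bfs{t}\neq0$, I would set $g(x):=\psi(f_{\bfs{t}}(x))$ and recognize the inner sum as $k!\,e_k(g(\fq))$, i.e.\ $k!$ times the $k$-th elementary symmetric polynomial in the $q$ values of $g$. Newton's identities give
\[
k!\,e_k(g(\fq))=\sum_{\lambda\vdash k}(-1)^{k-\ell(\lambda)}\frac{k!}{z_\lambda}P_\lambda,\qquad P_j:=\sum_{x\in\fq}\psi\bigl(jf_{\bfs{t}}(x)\bigr),\quad P_\lambda:=\prod_i P_{\lambda_i},
\]
with the usual partition normalizer $z_\lambda$. I would then bound $P_j$ case by case: when $p\mid j$, $jf_{\bfs{t}}\equiv0$ in $\fq[X]$ and $|P_j|=q$; when $p\nmid j$, I would apply the Frobenius reduction (iterating $\psi(cX^{pr})=\psi(c^{1/p}X^r)$, valid because $\psi(a^p)=\psi(a)$) to replace $jf_{\bfs{t}}$ by an equivalent polynomial of degree at most $m$ with exponents coprime to $p$, and then invoke Weil's bound to obtain $|P_j|\leq(m-1)\sqrt q$.

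With $A:=(m-1)\sqrt q$ and $Q_j:=q$ for $p\mid j$, $Q_j:=A$ otherwise, the triangle inequality applied to Newton's formula yields
\[
|k!\,e_k(g(\fq))|\leq k!\,[X^k]\exp\Big(\sum_{j\geq1}\frac{Q_jX^j}{j}\Big)=k!\,[X^k]\,(1-X)^{-A}(1-X^p)^{-(q-A)/p},
\]
after using $\sum_{p\mid j}X^j/j=-\frac{1}{p}\log(1-X^p)$. The final step is a generating-function comparison: factoring $1-X^p=(1-X)(1+X+\cdots+X^{p-1})$ and noting that for real $c\geq1$ the series $(1-X^p)^{-c}$ is dominated coefficient-wise by $(1-X)^{-c}$ (because $\binom{c+\ell-1}{\ell}$ is non-decreasing in $\ell$), one deduces
\[
[X^k](1-X)^{-A}(1-X^p)^{-(q-A)/p}\leq[X^k](1-X)^{-(A+q/p)}=\binom{q/p+(m-1)\sqrt q+k-1}{k}.
\]
Dividing the error through by $k!\,q^m$ and absorbing the factor $(q^m-1)/q^m\leq 1$ gives the claimed bound. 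The hardest part will be carefully handling the degenerate $\bfs{t}$'s for which the Frobenius-reduced polynomial collapses to a constant (forcing $|P_j|=q$ even when $p\nmid j$, so that the clean dichotomy $Q_j\in\{q,A\}$ breaks down); these need to be shown to live on a low-dimensional locus whose contribution is still absorbed into the $q/p$ term, and this is where all the $p$-adic bookkeeping concentrates. The generating-function step itself is then mechanical once $A+q/p$ is identified as the right exponent.
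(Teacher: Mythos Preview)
The paper does not prove this statement at all; it is quoted from Li--Wan \cite{Wan2010} as prior work which the present paper then improves. So there is no ``paper's own proof'' to compare against. That said, your sketch \emph{is} the Li--Wan argument (characters, Newton/cycle-index, Weil, then the generating-function comparison), and the bookkeeping from the dichotomy $|P_j|\le q$ for $p\mid j$ versus $|P_j|\le(m-1)\sqrt q$ for $p\nmid j$ through to the final binomial bound is correct.

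The genuine gap is precisely the one you flag, and your proposed fix does not work. When $m\ge p$ there exist nonzero $\bfs t$ for which $f_{\bfs t}$ is Artin--Schreier trivial: for instance, with $p$ odd, $\bfs t=(t_1,0,\dots,0,-t_1^{\,p},0,\dots,0)$ (the nonzero entry $-t_1^{\,p}$ in slot $p$) gives $f_{\bfs t}(x)=-\bigl((t_1x)^p-t_1x\bigr)$, hence $\psi(jf_{\bfs t}(x))=1$ for all $x$ and $P_j=q$ for \emph{every} $j$, not only those with $p\mid j$. (Because $j\bmod p\in\F_p$ is Frobenius-fixed, one has $j^{1/p}=j$ in $\fq$, so the Frobenius-reduced polynomial collapses for all $j$ with $p\nmid j$ simultaneously whenever it collapses for $j=1$.) For each such $\bfs t$ the inner sum equals $k!\binom{q}{k}$ exactly, and there is already a $(q-1)$-element family of them. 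This is not a ``low-dimensional locus absorbed into the $q/p$ term'': taking absolute values term-by-term gives a contribution of order $q\cdot\binom{q}{k}/q^m$, which can dwarf the asserted error $\binom{q/p+(m-1)\sqrt q+k-1}{k}$. The rescue requires either the standing hypothesis $m<p$ (under which no nonzero $\bfs t$ is degenerate and your proof is complete as written), or a separate treatment that keeps the phase $\psi(-\bfs t\cdot\bfs b)$ and exploits cancellation of the character sum over the additive subgroup of Artin--Schreier-trivial $\bfs t$. Your sketch supplies neither.
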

Observe that, in the theorem above, the error term is of order $\mathcal{O}(p^{
(s-1)k
})$, while in this article, we get an estimate on $N_m(k,\bfs{b})$ with error term of order $\mathcal{O}(p^{
sk/2
})$. Furthermore, when $p$ divides $k$ we obtain one more term in the asymptotic development of $N_m(k,\bfs{b})$ in terms of $q$, see Theorems \ref{teo: estimación Nm(k,b) p no divide a k} and \ref{teo: estimación Nm(k,b) p  divide a k}. 
\begin{theorem}
Let $m, k$ be positive integers such that  $k \leq 2q^{0.9}-\sqrt{q}+1$ and $m \leq \frac{k}{20}$. Suppose that $q>2^{21}$ and $p\geq 5$. We have
\begin{enumerate}
    \item if $p$ does not divide $k$, then
    \[
\left |N_m(k,\boldsymbol{b})-\frac{1}{q^m}{q\choose k}\right|\leq M \cdot (-1)^k \binom{-\sqrt{q}}{k},
\]
\item if $p$ divide $k$, then
\[
\left |N_m(k,\boldsymbol{b})-\frac{1}{q^m}\Big({q\choose k} +(-1)^{k+k/p} v(\bfs{b}) \binom{q/p}{k/p}\Big)\right|\leq M \cdot (-1)^k \binom{-\sqrt{q}}{k},
\]
\end{enumerate}
where $v(b)=q^m-1$  if $\bfs{b}=0$, $v(\bfs{b})=-1$ if $\bfs{b} \neq 0$ and $M=3^4\cdot 2^{m-1}(3+d_mm)^{k+1}.$
\end{theorem}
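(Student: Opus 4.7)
The plan is to translate $N_m(k,\boldsymbol{b})$ into a count of $\fq$-rational points on an explicit affine variety and then apply Weil-type estimates. First, I would write
\[
k!\, N_m(k,\boldsymbol{b}) = \#\{(a_1,\ldots,a_k)\in\fq^k : a_i\neq a_j \text{ for } i\neq j,\ \textstyle\sum_{j=1}^k a_j^i=b_i,\ i=1,\ldots,m\},
\]
and use Newton's identities to replace the power-sum equations by (near-)linear conditions on the elementary symmetric functions $e_1,\ldots,e_m$ of $(a_1,\ldots,a_k)$. This recasts the problem as counting squarefree, fully-split monic polynomials $f(T)=T^k+c_1T^{k-1}+\cdots+c_k\in\fq[T]$ of degree $k$ whose first $m$ coefficients are prescribed by $\boldsymbol{b}$.

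Next, I would enforce the distinctness constraint via Möbius inversion over the partition lattice of $\{1,\ldots,k\}$. For each partition $\pi$, the corresponding term counts $\fq$-rational points of the diagonal system
\[
V_\pi:\ \sum_{B\in\pi} |B|\, y_B^i = b_i, \quad i=1,\ldots,m,
\]
in $|\pi|$ variables. To estimate $\#V_\pi(\fq)$ I would decompose $V_\pi$ into its absolutely irreducible $\fq$-components and apply the Deligne--Weil bound to each; the total degree is controlled by Bezout's theorem in terms of $m$ and of the quantity $d_m$ appearing in $M$, which accounts for the factor $(3+d_m m)^{k+1}$, while the factor $2^{m-1}$ arises from the combinatorics of expanding the system into irreducible components.

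The main term $q^{-m}\binom{q}{k}$ comes from the finest partition (all singletons), whose associated variety is a codimension-$m$ affine subspace. When $p\mid k$, Newton's identities become degenerate because the coefficient $k$ of $e_k$ vanishes modulo $p$; consequently $V_\pi$ acquires an additional absolutely irreducible Frobenius-twisted component whose $\fq$-point count is not of square-root size, and it contributes the extra term $(-1)^{k+k/p}v(\boldsymbol{b})\binom{q/p}{k/p}$. Summing the Weil bounds over the remaining partition contributions then yields the error $M\cdot(-1)^k\binom{-\sqrt{q}}{k}$, which matches the asserted $\mathcal{O}(p^{sk/2})$ order. The main obstacle will be the geometric irreducibility analysis of the $V_\pi$, especially when $p\mid k$: one must identify exactly which components produce the explicit main terms and verify that every remaining component is absolutely irreducible of the expected dimension so that the Deligne--Weil estimate applies with the claimed degree bound. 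This case analysis is precisely the technical heart carried out in Theorems \ref{teo: estimación Nm(k,b) p no divide a k} and \ref{teo: estimación Nm(k,b) p divide a k}, whose combination yields the statement.
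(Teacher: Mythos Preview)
Your high-level structure---inclusion--exclusion over partitions of $\{1,\ldots,k\}$ and point-counting on the resulting diagonal systems---matches the paper's use of the Li--Wan sieve formula $|\overline{X}|=\sum_{\tau}(-1)^{k-l(\tau)}C(\tau)|X_\tau|$. Several of your attributions, however, would lead you astray.

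First, the main term $q^{-m}\binom{q}{k}$ does \emph{not} come from the finest partition alone. The variety attached to the finest partition is the full diagonal complete intersection $V(f_1,\ldots,f_m)$ (not a linear subspace), and its point count is $q^{k-m}+O(q^{k/2})$. What produces $\binom{q}{k}$ is the sum of the \emph{main terms} $q^{l(\tau)-m}$ over \emph{all} cycle types $\tau$, weighted by $(-1)^{k-l(\tau)}C(\tau)$; the paper identifies this sum via the generating-function identity $C_k(q,-q,\ldots,(-1)^{k-1}q)=\binom{q}{k}k!$. Likewise the error bound $(-1)^k\binom{-\sqrt q}{k}$ arises from the companion identity $C_k(\sqrt q,\ldots,\sqrt q)=(-1)^k\binom{-\sqrt q}{k}k!$ applied to the sum of the Weil error terms.

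Second, the extra term when $p\mid k$ is not a geometric degeneration from Newton's identities (which you never need here, since $m\le k/20<p$ keeps all relevant coefficients invertible). It is purely combinatorial: for a partition all of whose block sizes are divisible by $p$, the reduced system is $0=b_i$, so $|X_\tau|=q^{l(\tau)}$ if $\bfs b=\bfs 0$ and $0$ otherwise. The signed sum of $C(\tau)q^{l(\tau)}$ over such $\tau$ equals $(-1)^{k+k/p}\binom{q/p}{k/p}k!$ (and vanishes if $p\nmid k$), by Lemma~3.1 of \cite{Wan2010}; this is exactly the correction $v(\bfs b)\binom{q/p}{k/p}$.

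Third, the paper does not decompose $V_\pi$ into irreducible pieces. It proves directly (Theorems~\ref{teo: props de la clausura en el infinito} and~\ref{teo: pcl V interseccion completa}) that each such diagonal variety is already an absolutely irreducible normal complete intersection whenever the number of variables exceeds $2m$, and applies the Ghorpade--Lachaud estimate for singular complete intersections. The constants $3^4\cdot 2^{m-1}(3+d_mm)^{k+1}$ come from those Betti-number bounds, not from any component count. For the short partitions with at most $2m$ blocks the paper uses only the crude bound \eqref{eq: upper bound -- affine gral}; these terms are absorbed into the error via the remarks in the appendix. Your plan as written lacks both ingredients and would not yield the stated constant $M$.
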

Furthermore, in \cite{Wan2010} the authors provided an existence's result.
\begin{theorem}\cite[Theorem 1.4]{Wan2010}
For any $\varepsilon>0$, there is a constant $c_{\varepsilon}>0$ such that if $m<\varepsilon \sqrt{k}$ and $4\varepsilon^2\ln^2{q}<k<c_{\varepsilon
}q$, then $N_m(k,\bfs{b})>0$ for all $\bfs{b}\in \fq^m$.
\end{theorem}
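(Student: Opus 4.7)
The plan is to deduce the positivity of $N_m(k,\bfs{b})$ as a direct consequence of the two estimates already established in Theorems~\ref{teo: estimación Nm(k,b) p no divide a k} and \ref{teo: estimación Nm(k,b) p  divide a k}. Writing those estimates in the unified form
\[
N_m(k,\bfs{b})\;\geq\;\frac{1}{q^m}\binom{q}{k}\;-\;\frac{|\delta(k,\bfs{b})|}{q^m}\;-\;M\,\binom{\sqrt{q}+k-1}{k},
\]
with $\delta(k,\bfs{b})=0$ when $p\nmid k$ and $\delta(k,\bfs{b})=(-1)^{k+k/p}v(\bfs{b})\binom{q/p}{k/p}$ otherwise (and using $(-1)^k\binom{-\sqrt{q}}{k}=\binom{\sqrt{q}+k-1}{k}$), the claim reduces to showing that the first term strictly exceeds the sum of the other two under the stated hypotheses.

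For the main comparison, I would use the elementary bounds $\binom{q}{k}\geq(q-k)^k/k!$ and $\binom{\sqrt{q}+k-1}{k}\leq(\sqrt{q}+k)^k/k!$ to reduce the question to the single scalar inequality
\[
\left(\frac{q-k}{\sqrt{q}+k}\right)^{\!k}\;>\;2\,q^m\,M,
\]
and split into the regimes $k\leq\sqrt{q}$ and $\sqrt{q}<k<c_\varepsilon q$. In the first the left-hand side is at least $(\sqrt{q}/3)^k$; in the second it is at least $\bigl((1-c_\varepsilon)/(2c_\varepsilon)\bigr)^k$. Taking logarithms, the hypothesis $k>4\varepsilon^2\ln^2 q$ combined with $m<\varepsilon\sqrt{k}$ gives $m\ln q<\varepsilon\sqrt{k}\cdot\sqrt{k}/(2\varepsilon)=k/2$, so the $q^m$ factor contributes at most $e^{k/2}$; meanwhile $m=O(\sqrt{k})$ makes $(3+d_mm)^{k+1}$ at most $k^{O(k)}$. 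Choosing $c_\varepsilon$ sufficiently small in terms of $\varepsilon$ (and of the growth rate of $d_m$) then makes the left-hand side exceed the right-hand side by an exponential factor in $k$.

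For the case $p\mid k$ one must additionally absorb $|\delta(k,\bfs{b})|/q^m\leq\binom{q/p}{k/p}$ into half of $q^{-m}\binom{q}{k}$. By Stirling, the ratio $q^m\binom{q/p}{k/p}/\binom{q}{k}$ decays essentially as $p^{-k(1-1/p)}(k/q)^{k(1-1/p)}q^m$, which under $k<c_\varepsilon q$, $m<\varepsilon\sqrt{k}$, and $p\geq 2$ can again be made exponentially small in $k$ by a suitable choice of $c_\varepsilon$; thus this secondary error term is strictly smaller than the main term in the prescribed range.

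The main obstacle is the quantitative balancing of the constant $M=3^4\cdot 2^{m-1}(3+d_mm)^{k+1}$ against $\binom{q}{k}/q^m$: since $m$ may be as large as $\varepsilon\sqrt{c_\varepsilon q}$, the factor $(3+d_mm)^{k+1}$ can potentially be of the order $q^{\alpha(\varepsilon)k/2}$ for some explicit $\alpha(\varepsilon)$, and one has to choose $c_\varepsilon$ small enough in terms of $\varepsilon$ and $\alpha(\varepsilon)$ so that $\bigl((q-k)/(\sqrt{q}+k)\bigr)^k$ still wins. It is precisely the joint interplay between the two hypotheses $k<c_\varepsilon q$ (which keeps $M$ under control) and $k>4\varepsilon^2\ln^2 q$ (which controls $q^m$ against $q^{k/2}$) that makes the argument close, and this is where the explicit dependence $c_\varepsilon=c_\varepsilon(\varepsilon)$ in the statement is fixed.
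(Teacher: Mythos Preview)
This theorem is not proved in the paper at all: it is quoted from \cite{Wan2010} as a benchmark that the paper's own results are being compared against. There is therefore no ``paper's own proof'' to match, and your plan to deduce it from Theorems~\ref{teo: estimación Nm(k,b) p no divide a k} and \ref{teo: estimación Nm(k,b) p  divide a k} is not what is required.

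More importantly, your proposed derivation does not go through. In the standard $m$-th moment problem one has $d_m=m$, so the error constant is $M\asymp (3+m^2)^{k+1}$. Under the hypothesis $m<\varepsilon\sqrt{k}$ this gives $M$ of order $(\varepsilon^2 k)^{k}$, i.e.\ super-exponential in $k$. On the other side, when $k$ is near the top of the allowed range $k\sim c_\varepsilon q$, the ratio
\[
\frac{\binom{q}{k}}{q^{m}\,(-1)^k\binom{-\sqrt{q}}{k}}
\]
is only of size roughly $(1/c_\varepsilon)^{k}\cdot e^{-k/2}$, a \emph{fixed} exponential in $k$. No choice of the constant $c_\varepsilon$ (which cannot depend on $k$) can make a fixed exponential $A^k$ dominate $k^{ck}$ for all large $k$. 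Your sentence ``Choosing $c_\varepsilon$ sufficiently small \dots\ then makes the left-hand side exceed the right-hand side by an exponential factor in $k$'' is exactly where the argument breaks: the factor $(3+d_mm)^{k+1}$ is not $k^{O(1)}$ but $k^{O(k)}$, and that cannot be absorbed. This is precisely why the paper's own existence results (Theorems~\ref{teo: existencia con estimación} and \ref{teo: existencia con estimación 2}) impose the far more restrictive condition $m\le d_m\le \tfrac{\sqrt{3}}{3}\,k^{0.02}$ rather than $m<\varepsilon\sqrt{k}$, and why the paper presents its results as \emph{complementary} to, not as a reproof of, the cited theorem of Li--Wan.
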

Observe that $q$ can be linear in k in the above theorem and the relation between $m$ and $k$ is $m=\mathcal{O}(k^{1/2})$. In this paper,  we get a linear relation between $m$ and $k$ (see  Theorem \ref{teo: existencia con Brun}).

\begin{theorem} 
    Let $D=\fq$, $ m \leq \frac{k-25}{50}$ and $ k\leq q^{0,24}$
    then $N_m(k,\bfs{b})>0$ for all $\bfs{b}\in \fq^m$.
\end{theorem}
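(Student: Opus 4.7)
The overall strategy is to deduce positivity of $N_m(k,\bfs b)$ directly from the quantitative estimate in the previous theorem: that estimate bounds the difference between $N_m(k,\bfs b)$ and an explicit main expression by an error of size $M\,(-1)^k\binom{-\sqrt q}{k}$, so it suffices to verify that, under the hypotheses $k\leq q^{0.24}$ and $m\leq (k-25)/50$, the main expression strictly exceeds this error.  First I would check the prerequisites of the earlier estimate: the constraint $m\leq k/20$ follows from $m\leq (k-25)/50$, and $k\leq q^{0.24}$ is well below $2q^{0.9}-\sqrt q+1$ whenever $q>2^{21}$, so the estimate applies in both the case $p\nmid k$ and the case $p\mid k$.

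Next, using the identity $(-1)^k\binom{-\sqrt q}{k}=\binom{\sqrt q+k-1}{k}$ together with elementary estimates valid in the range $k\leq q^{0.24}$, one obtains
\begin{equation*}
\binom{q}{k}\geq \frac{q^k}{2\,k!},\qquad \binom{\sqrt q+k-1}{k}\leq e\cdot\frac{q^{k/2}}{k!}.
\end{equation*}
In the case $p\nmid k$, positivity therefore reduces to the elementary inequality
\begin{equation*}
q^{k/2-m}>2e\cdot 3^4\cdot 2^{m-1}(3+d_m m)^{k+1}.
\end{equation*}
The assumption $m\leq (k-25)/50$ gives the key gap $k/2-m\geq 12k/25+1/2$, and combined with $k\leq q^{0.24}$ (so $\log q\geq(\log k)/0.24$) and the polynomial growth of $d_m m$ in the regime $m\leq k/50$, the inequality is verified by taking logarithms and comparing the linear-in-$k$ left-hand exponent against the $(k+1)\log(3+d_m m)$ term on the right.

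For the case $p\mid k$ one must also dispose of the extra term $q^{-m}(-1)^{k+k/p}v(\bfs b)\binom{q/p}{k/p}$. When $\bfs b\neq 0$ we have $|v(\bfs b)|=1$, so the extra term is bounded by $q^{-m}\binom{q/p}{k/p}$, which is negligible compared to $q^{-m}\binom{q}{k}$. When $\bfs b=0$ one has $v(\bfs b)=q^m-1$ and the term has magnitude roughly $\binom{q/p}{k/p}\approx q^{k/p}/(k/p)!$; the main term $q^{-m}\binom{q}{k}\approx q^{k-m}/k!$ still dominates because $k-m-k/p\geq k(1-\tfrac1{50}-\tfrac15)=\tfrac{39k}{50}>0$ under the assumptions $p\geq 5$ and $m\leq k/50$. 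Thus in every sub-case the main term strictly overwhelms both the auxiliary term and the error, yielding $N_m(k,\bfs b)>0$.

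The main obstacle is the bookkeeping: tracking all constants inside $M=3^4\cdot 2^{m-1}(3+d_m m)^{k+1}$ and verifying that $q^{k/2-m}$ really does beat $M$ throughout the entire region $k\leq q^{0.24}$, $m\leq(k-25)/50$.  The specific linear relation $m\leq(k-25)/50$ is engineered precisely so that $k/2-m$ grows linearly in $k$ while $m$ (and hence $d_m m$) remains small, producing the margin required for the comparison to hold uniformly; the $p\mid k$ sub-case adds only a technically routine extra check since the exponent $k/p$ is dominated by $k-m$ once $p\geq 5$.
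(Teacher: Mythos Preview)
Your approach is \emph{genuinely different} from the paper's. The paper proves this theorem via the Brun sieve: it writes
\[
k!\,N_m(k,\bfs b)\ \ge\ |V(\fq)|-\binom{k}{2}\,|V_{1,2}(\fq)|,
\]
where $V_{1,2}=V\cap\{X_1=X_2\}$, and then applies only the variety point-count estimate (Theorem~\ref{teo: estimacion ptos racionales V}) to $V$ and to $V_{1,2}$. This reduces positivity to the inequality $q^{(k-1)/2-m}>(3d_mm)^k$, which is then checked under $m\le(k-25)/50$ and $k\le q^{0.24}$. By contrast, you deduce positivity from the full $N_m(k,\bfs b)$ estimates of Theorems~\ref{teo: estimación Nm(k,b) p no divide a k} and~\ref{teo: estimación Nm(k,b) p divide a k}. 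Both routes land on essentially the same arithmetic comparison $q^{ck}>(3d_mm)^{k+O(1)}$ with $c\approx 12/25$, and your elementary bounds $\binom{q}{k}\ge q^k/(2k!)$ and $\binom{\sqrt q+k-1}{k}\le e\,q^{k/2}/k!$ are valid in the range $k\le q^{0.24}$; the margin $2500^{k}$ coming from $m^2\le k^2/2500$ comfortably absorbs the extra factors $2^{m-1}$ and $(3+d_mm)$ versus $(3d_mm)$.

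What each approach buys: the Brun-sieve argument is more elementary---it needs only the point-count on a single variety and its codimension-one slice, not the intricate permutation sieve behind Theorems~\ref{teo: estimación Nm(k,b) p no divide a k}--\ref{teo: estimación Nm(k,b) p divide a k}---and it avoids the standing hypotheses $p\ge 5$ (resp.\ $p\ge 3$) and $q>2^{21}$ of those theorems. Your route is shorter once those estimates are in hand, but you silently import the condition $p\ge 5$, which is \emph{not} among the hypotheses of the theorem you are proving; this is a small but genuine gap in your argument (the bound $q>2^{21}$, on the other hand, does follow from $k\ge 75$ and $k\le q^{0.24}$). It is also worth noting that the paper separates Theorems~\ref{teo: existencia con estimación}--\ref{teo: existencia con estimación 2} (derived from the estimates, with the very restrictive $d_m\le\frac{\sqrt3}{3}k^{0.02}$ but the wide range $k\le 2q^{0.9}$) from Theorem~\ref{teo: existencia con Brun} (Brun sieve, linear $m$--$k$ relation but narrower range $k\le q^{0.24}$); your argument shows that, at the cost of $p\ge5$, the estimates themselves already yield the linear relation once one restricts to $k\le q^{0.24}$.
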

In \cite{Marino2020} the autors exhibit a deterministic polynomial time algorithm for the {\textit{m}}--th moment {\textit{k}}--SSP  for a fixed $m$ when the set $D$ is the image set of a monomial or a Dickson polynomial of any degree $n$ and for small  $k$ range such as $k<3m+1$.  An open problem is to study the \textit{m}--th moment {\textit{k}}--SSP  for a general set $D$. In this paper, we provide conditions over $q$, $m$ and $k$ when $D$ is the image set of a polynomial $f=a_nT^n+\cdots + a_2T^2$.
Our approach is different to the one used in \cite{LiWa08}, \cite{Wan2010}, \cite{Nu2019} and \cite{Marino2020}.  We related the \textit{m}--th moment {\textit{k}}--SSP with the problem of estimating the number of $\fq$--rational solutions of a system of diagonal equations.

There are few results for this type of systems in the literature.
In \cite{Cao2016} and \cite{CRGF08}, the authors present exact counting results for specific cases of systems involving two polynomials. In \cite{Spackman79} and \cite{Spackman81}, there are estimations with matching exponents in corresponding columns. And in \cite{PP22} there are estimations with matching exponents in corresponding columns and $\boldsymbol{b}\in \mathbb{F}_q[X_1,\ldots,X_k]^m$. In this sense, we focus on a new type of system with exponents matching across rows.

The paper is organized as follows.
 In Section 2 we briefly recall the notions and notations of algebraic geometry we use. Section 3 is devoted to estimate the number of $\fq$--rational solutions of systems of diagonal equations, this is solutions with coordinates in $\fq$.
In Section 4 we obtain  estimates on the number $N_m(k,\bfs{b})$ and we provide conditions under which this number is positive.
In Section 5 we study the \textit{m}--th moment {\textit{k}}--SSP problem when $D$ is the image set of a polynomial $f=a_nT^n+\cdots + a_2T^2$. Finally, in Section 6 we recall some results related to combinatorial numbers.

\section{Basic notions of algebraic geometry}
%
In this section we collect the basic definitions and facts of
algebraic geometry that we need for our results. We use standard
notions and notations which can be found in, e.g., \cite{Kunz85},
\cite{Shafarevich94}.

Let $\K$ be any of the fields $\fq$ or $\cfq$, the closure of $\fq$. We denote by $\A^r$
the affine $r$--dimensional space $\cfq{\!}^{r}$ and by $\Pp^r$ the
projective $r$--dimensional space over $\cfq$. Both spaces
are endowed with their respective Zariski topologies over $\K$, for
which a closed set is the zero locus of a set of polynomials of
$\K[X_1,\ldots, X_r]$, or of a set of homogeneous polynomials of
$\K[X_0,\ldots, X_r]$.

A subset $V\subset \Pp^r$ is a {\em projective variety defined over}
$\K$ (or a projective $\K$--variety for short) if it is the set of
common zeros in $\Pp^r$ of homogeneous polynomials $F_1,\ldots, F_m
\in\K[X_0 ,\ldots, X_r]$. Correspondingly, an {\em affine variety of
	$\A^r$ defined over} $\K$ (or an affine $\K$--variety) is the set of
common zeros in $\A^r$ of polynomials $F_1,\ldots, F_{m} \in
\K[X_1,\ldots, X_r]$. We think a projective or affine $\K$--variety
to be endowed with the induced Zariski topology. We shall denote by
$\{F_1=0,\ldots, F_m=0\}$ or $V(F_1,\ldots,F_m)$ the affine or
projective $\K$--variety consisting of the common zeros of
$F_1,\ldots, F_m$.

In the remaining part of this section, unless otherwise stated, all
results referring to varieties in general should be understood as
valid for both projective and affine varieties.

A $\K$--variety $V$ is {\em irreducible} if it cannot be expressed
as a finite union of proper $\K$--subvarieties of $V$. Further, $V$
is {\em absolutely irreducible} if it is $\cfq$--irreducible as a
$\cfq$--variety. Any $\K$--variety $V$ can be expressed as an
irredundant union $V=\mathcal{C}_1\cup \cdots\cup\mathcal{C}_s$ of
irreducible (absolutely irreducible) $\K$--varieties, unique up to
reordering, called the {\em irreducible} ({\em absolutely
	irreducible}) $\K$--{\em components} of $V$.

For a $\K$--variety $V$ contained in $\Pp^r$ or $\A^r$, its {\em
	defining ideal} $I(V)$ is the set of polynomials of $\K[X_0,\ldots,
X_r]$, or of $\K[X_1,\ldots, X_r]$, vanishing on $V$. The {\em
	coordinate ring} $\K[V]$ of $V$ is the quotient ring
$\K[X_0,\ldots,X_r]/I(V)$ or $\K[X_1,\ldots,X_r]/I(V)$. The {\em
	dimension} $\dim V$ of $V$ is the length $n$ of a longest chain
$V_0\varsubsetneq V_1 \varsubsetneq\cdots \varsubsetneq V_n$ of
nonempty irreducible $\K$--varieties contained in $V$. 
We say that $V$ has {\em pure dimension} $n$ if every irreducible
$\K$--component of $V$ has dimension $n$. If $W$ is a subvariety of $V$, then the number $\dim V - \dim W$ is
called the {\it{codimension}} of $W$ in $V$.
A $\K$--variety of $\Pp^r$
or $\A^r$ of pure dimension $r-1$ is called a $\K$--{\em
	hypersurface}. A $\K$--hypersurface of $\Pp^r$ (or $\A^r$) can also
be described as the set of zeros of a single nonzero polynomial of
$\K[X_0,\ldots, X_r]$ (or of $\K[X_1,\ldots, X_r]$).

The {\em degree} $\deg V$ of an irreducible $\K$--variety $V$ is the
maximum of $|V\cap L|$, considering all the linear spaces $L$ of
codimension $\dim V$ such that $|V\cap L|<\infty$. More generally,
following \cite{Heintz83} (see also \cite{Fulton84}), if
$V=\mathcal{C}_1\cup\cdots\cup \mathcal{C}_s$ is the decomposition
of $V$ into irreducible $\K$--components, we define the degree of
$V$ as
$$\deg V:=\sum_{i=1}^s\deg \mathcal{C}_i.$$
The degree of a $\K$--hypersurface $V$ is the degree of a polynomial
of minimal degree defining $V$. 
We shall use the following {\em B\'ezout inequality} (see
\cite{Fulton84, Heintz83, Vogel84}): if $V$ and $W$
are $\K$--varieties of the same ambient space, then
\begin{equation}\label{eq: Bezout}
\deg (V\cap W)\le \deg V \cdot \deg W.
\end{equation}

Let $V\subset\A^r$ be a $\K$--variety, $I(V)\subset \K[X_1,\ldots,
X_r]$ its defining ideal and $x$ a point of $V$. The {\em dimension}
$\dim_xV$ {\em of} $V$ {\em at} $x$ is the maximum of the dimensions
of the irreducible $\K$--components of $V$ containing $x$. If
$I(V)=(F_1,\ldots, F_m)$, the {\em tangent space} $\mathcal{T}_xV$
to $V$ at $x$ is the kernel of the Jacobian matrix $(\partial
F_i/\partial X_j)_{1\le i\le m,1\le j\le r}(x)$ of $F_1,\ldots, F_m$
with respect to $X_1,\ldots, X_r$ at $x$. We have
$\dim\mathcal{T}_xV\ge \dim_xV$ (see, e.g., \cite[page
94]{Shafarevich94}). The point $x$ is {\em regular} if
$\dim\mathcal{T}_xV=\dim_xV$; otherwise, $x$ is called {\em
	singular}. The set of singular points of $V$ is the {\em singular
	locus} of $V$; it is a closed $\K$--subvariety of
$V$. A variety is called {\em nonsingular} if its singular locus is
empty. For projective varieties, the concepts of tangent space,
regular and singular point can be defined by considering an affine
neighborhood of the point under consideration.
%
%
\subsection{Rational points}
Let $\Pp^r(\fq)$ be the $r$--dimensional projective space over $\fq$
and $\A^r(\fq)$ the $r$--dimensional $\fq$--vector space $\fq^r$.
For a projective variety $V\subset\Pp^r$ or an affine variety
$V\subset\A^r$, we denote by $V(\fq)$ the set of $\fq$--rational
points of $V$, namely $V(\fq):=V\cap \Pp^r(\fq)$ in the projective
case and $V(\fq):=V\cap \A^r(\fq)$ in the affine case. For an affine
variety $V$ of dimension $n$ and degree $\delta$, we have the
following bound (see, e.g., \cite[Lemma 2.1]{CaMa06}):
\begin{equation}\label{eq: upper bound -- affine gral}
|V(\fq)|\leq \delta\, q^n.
\end{equation}
On the other hand, if $V$ is a projective variety of dimension $n$
and degree $\delta$, then we have the following bound (see
\cite[Proposition 12.1]{GhLa02a} or \cite[Proposition 3.1]{CaMa07};
see \cite{LaRo15} for more precise upper bounds):
\begin{equation*}\label{eq: upper bound -- projective gral}
|V(\fq)|\leq \delta\, p_n,
\end{equation*}
where $p_n:=q^n+q^{n-1}+\cdots+q+1=|\Pp^n(\fq)|$.
%
%
\subsection{Complete intersections}\label{subsec: complete intersections}
A \emph{set–theoretic complete intersection} is an affine $\K$-variety $ V (F_1,\ldots,F_m)\subset\A^r$ or a projective $\K$-variety $V (F_1,\ldots,F_m)\subset \Pp^r$ defined by $m \leq r$ polynomials $F_1, \ldots,F_m \in \K[X_1, \ldots, X_r]$ or homogeneous polynomials $F_1, \ldots,F_m$ in $\mathbb{K}[X_0,\ldots,X_r]$, which is of pure dimension $r-m$. If in addition $(F_1, \ldots,F_m)$ is a radical ideal of $\K[X_1, \ldots, X_r]$ or $\K[X_1, \ldots,X_r]$, then we say that $V (F_1,\ldots,F_m)$ is an \emph{ideal–theoretic complete intersection}. 
Elements $F_1,\ldots, F_m$ in $\mathbb{K}[X_1,\ldots,X_r]$ or
$\mathbb{K}[X_0,\ldots,X_r]$ form a \emph{regular sequence} if the ideal $(F_1, \ldots, F_m)$ they define in $\K[X_1, \ldots,X_r]$ or $K[X_0, \ldots, X_r]$ is a proper ideal, $F_1$
is nonzero and, for $2 \leq i \leq m$,  $F_i$ is  neither zero nor a zero divisor in the quotient
ring $\mathbb{K}[X_1,\ldots,X_r]/ (F_1,\ldots,F_{i-1})$ or
$\mathbb{K}[X_0,\ldots,X_r]/ (F_1,\ldots,F_{i-1})$.  In such a case, the (affine or projective) variety $ V (F_1,\ldots,F_m)$ they define is a set–theoretic complete intersection.

 We shall frequently use the following criterion to
prove that a variety is a complete intersection (see, e.g.,
\cite[Theorem 18.15]{Eisenbud95}).
\begin{theorem}\label{theorem: eisenbud 18.15}
	Let $F_1,\ldots,F_m\in\mathbb{K}[X_1,\ldots,X_r]$ be polynomials
	which form a regular sequence and let
	$V:=V(F_1,\ldots,F_m)\subset\A^r$. Denote by
	$(\partial\bfs{F}/\partial \bfs{X})$ the Jacobian matrix of
	$F_1,\ldots,F_m$ with respect to $\bfs{X}:=(X_1,\ldots,X_r)$. If the subvariety
	of $V$ defined by the set of common zeros of the maximal minors of
	$(\partial\bfs{F}/\partial \bfs{X})$ has codimension at least one in
	$V$, then $F_1,\ldots,F_m$ define a radical ideal. In particular,
	$V$ is a complete intersection.
\end{theorem}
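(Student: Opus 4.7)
The plan is to deduce that $I:=(F_1,\ldots,F_m)$ is radical by invoking Serre's criterion for reducedness: the quotient ring $A:=\K[X_1,\ldots,X_r]/I$ is reduced if and only if it satisfies both $(R_0)$ (regular at every minimal prime) and $(S_1)$ (no embedded associated primes). The strategy is to obtain $(S_1)$ from the regular sequence hypothesis and $(R_0)$ from the Jacobian hypothesis; the complete intersection statement is then an immediate corollary.

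First I would handle $(S_1)$. Since the polynomial ring $R:=\K[X_1,\ldots,X_r]$ is Cohen--Macaulay and $F_1,\ldots,F_m$ form a regular sequence on $R$, quotienting by a regular sequence preserves the Cohen--Macaulay property, so $A=R/I$ is Cohen--Macaulay of pure Krull dimension $r-m$. Macaulay's unmixedness theorem then forces every associated prime of $I$ to be minimal and of height exactly $m$; in particular $A$ has no embedded primes, which is $(S_1)$, and $V=V(I)$ has pure dimension $r-m$.

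For $(R_0)$, let $\mathcal{C}_1,\ldots,\mathcal{C}_s$ be the irreducible components of $V$ and let $P_i$ be the minimal prime of $I$ corresponding to $\mathcal{C}_i$; by the previous paragraph $\mathrm{ht}(P_i)=m$. The Jacobian criterion says that the localization $A_{P_i}$ is regular precisely when the Jacobian matrix $(\partial\bfs{F}/\partial\bfs{X})$ has rank $m$ at the generic point of $\mathcal{C}_i$, equivalently when at least one $m\times m$ maximal minor fails to lie in $P_i$. The hypothesis is exactly that the common zero locus $W\subset V$ of these maximal minors has codimension $\ge 1$ in $V$, so no component $\mathcal{C}_i$ can be contained in $W$. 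Hence for every $i$ some maximal minor avoids $P_i$, making all the $A_{P_i}$ regular, which is $(R_0)$.

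Combining $(R_0)$ and $(S_1)$ via Serre's criterion gives that $A$ is reduced, i.e.\ $I$ is radical; together with the pure height--$m$ property this is precisely the statement that $V$ is an ideal--theoretic complete intersection. The main subtle step is the local Jacobian criterion: one must know that at $P_i$ the minimal number of generators of $I$ coincides with $\mathrm{ht}(P_i)=m$, so that non-vanishing of some $m\times m$ minor truly certifies regularity of $A_{P_i}$. This is exactly what the regular sequence assumption guarantees, which is why both hypotheses of the theorem are essential.
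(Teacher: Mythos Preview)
The paper does not actually prove this statement; it is quoted as a standard criterion from \cite[Theorem 18.15]{Eisenbud95} and used without proof. Your argument via Serre's criterion $(R_0)+(S_1)$ is correct and is essentially the same route taken in Eisenbud's book: Cohen--Macaulayness of $A=\K[X_1,\ldots,X_r]/(F_1,\ldots,F_m)$ (from the regular sequence hypothesis) supplies $(S_1)$ and unmixedness, and the Jacobian hypothesis forces every minimal prime to miss some maximal minor, giving $(R_0)$ via the Jacobian criterion. One small point worth making explicit is that the Jacobian criterion for regularity of $A_{P_i}$ requires the ground field to be perfect; this is satisfied here since $\K$ is $\fq$ or $\cfq$.
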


If $V\subset\Pp^r$ is a complete intersection defined over $\K$ of
dimension $r-m$, and $F_1 ,\ldots, F_m$ is a system of homogeneous
generators of $I(V)$, the degrees $d_1,\ldots, d_m$ depend only on
$V$ and not on the system of generators. Arranging the $d_i$ in such
a way that $d_1\geq d_2 \geq \cdots \geq d_m$, we call $(d_1,\ldots,
d_m)$ the {\em multidegree} of
$V$. In this case, a stronger version of 
\eqref{eq: Bezout} holds, called the {\em B\'ezout theorem} (see,
e.g., \cite[Theorem 18.3]{Harris92}):
\begin{equation}\label{eq: Bezout theorem}
\deg V=d_1\cdots d_m.
\end{equation}
A complete intersection $V$ is called {\em normal} if it is {\em
	regular in codimension 1}, that is, the singular locus
$\mathrm{Sing}(V)$ of $V$ has codimension at least $2$ in $V$,
namely $\dim V-\dim \mathrm{Sing}(V)\ge 2$ (actually, normality is a
general notion that agrees on complete intersections with the one we
define here). A fundamental result for projective complete
intersections is the Hartshorne connectedness theorem (see, e.g.,
\cite[Theorem VI.4.2]{Kunz85}): if $V\subset\Pp^r$ is a complete
intersection defined over $\K$ and $W\subset V$ is any
$\K$--subvariety of codimension at least 2, then $V\setminus W$ is
connected in the Zariski topology of $\Pp^r$ over $\K$. Applying the
Hartshorne connectedness theorem with $W:=\mathrm{Sing}(V)$, one
deduces the following result.
\begin{theorem}\label{theorem: normal complete int implies irred}
	If $V\subset\Pp^r$ is a normal complete intersection, then $V$ is
	absolutely irreducible.
\end{theorem}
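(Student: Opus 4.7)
The plan is to argue by contradiction directly from the Hartshorne connectedness theorem that was just cited. Suppose that the normal complete intersection $V\subset\Pp^r$ is not absolutely irreducible, and let $V=\mathcal{C}_1\cup\cdots\cup\mathcal{C}_s$ with $s\geq 2$ be its decomposition into absolutely irreducible components. Since $V$ is a complete intersection, it has pure dimension $r-m$, so every $\mathcal{C}_i$ has dimension $r-m$.

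The first key step is to show that, for $i\neq j$, the intersection $\mathcal{C}_i\cap\mathcal{C}_j$ is contained in the singular locus $\mathrm{Sing}(V)$. This is a local tangent-space argument: at a point $x\in\mathcal{C}_i\cap\mathcal{C}_j$ the local dimension of $V$ at $x$ equals $r-m$, but the tangent space $\mathcal{T}_xV$ contains both $\mathcal{T}_x\mathcal{C}_i$ and $\mathcal{T}_x\mathcal{C}_j$, two distinct subspaces of dimension $\geq r-m$, forcing $\dim \mathcal{T}_xV>r-m=\dim_xV$ and hence $x\in\mathrm{Sing}(V)$.

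The second key step is to exploit normality. By hypothesis $\mathrm{Sing}(V)$ has codimension at least $2$ in $V$, so $\mathrm{Sing}(V)$ is a $\K$--subvariety of $V$ to which the Hartshorne connectedness theorem applies with $W:=\mathrm{Sing}(V)$, yielding that $V\setminus\mathrm{Sing}(V)$ is connected in the Zariski topology of $\Pp^r$ over $\K$. On the other hand, writing
\[
V\setminus\mathrm{Sing}(V)=\bigcup_{i=1}^{s}\bigl(\mathcal{C}_i\setminus\mathrm{Sing}(V)\bigr),
\]
each piece is nonempty, because $\mathcal{C}_i$ has dimension $r-m$ while $\mathrm{Sing}(V)$ has dimension at most $r-m-2$; each piece is open in $V\setminus\mathrm{Sing}(V)$, since it is the complement of the closed set $\bigcup_{j\neq i}\mathcal{C}_j$ intersected with $V\setminus\mathrm{Sing}(V)$; and by the first step any two such pieces are disjoint. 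This exhibits $V\setminus\mathrm{Sing}(V)$ as a disjoint union of $s\geq 2$ nonempty open subsets, contradicting connectedness.

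I do not anticipate a serious obstacle: the only delicate point is the tangent-space argument showing that distinct absolutely irreducible components meet only along the singular locus, and this is standard for equidimensional varieties. Once that is in hand, the combination of normality and Hartshorne connectedness gives the contradiction immediately.
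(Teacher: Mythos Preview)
Your approach is exactly the one the paper intends: it states the theorem right after citing the Hartshorne connectedness theorem and simply says one deduces it by taking $W:=\mathrm{Sing}(V)$, without spelling out the details you provide.

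There is, however, a gap in your first key step. You assert that $\mathcal{T}_x\mathcal{C}_i$ and $\mathcal{T}_x\mathcal{C}_j$ are \emph{distinct} subspaces of $\mathcal{T}_xV$, but this is not justified and can fail: two irreducible components may be tangent at a common point and share the same Zariski tangent space there (e.g.\ the hypersurface $y(y-x^2)=0$ at the origin, where both branches have tangent line $y=0$). In that situation the inequality $\dim\mathcal{T}_xV>r-m$ does not follow from containing the two component tangent spaces alone. The conclusion $\mathcal{C}_i\cap\mathcal{C}_j\subset\mathrm{Sing}(V)$ is nevertheless correct; the standard argument is algebraic rather than tangent-space based: if $x$ were a regular point of $V$, the local ring $\mathcal{O}_{V,x}$ would be regular, hence an integral domain, forcing $x$ to lie on a unique irreducible component. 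With your tangent-space sketch replaced by this local-ring argument, the rest of your proof goes through and coincides with the paper's intended one.
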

%
%
%
%
%
%
\section{Systems of diagonal equations}

Let  $k,m, d_1,\ldots,d_m$ be positive integers such that $m \leq \frac{k-1}{2}$, $2\leq d_1<d_2<\ldots <d_m$. We can assume that $\mathrm{char}(\fq):=p$ does  not divide $d_i$ for $1\leq i \leq m$. Indeed, suppose that there exists $i$ such that $p^r t=d_i$ and $p$ does not divide $t$. Then we can replace the equation $X_1^{d_i}+X_2^{d_i}+\cdots + X_k^{d_i} = b_i$ by the equation $X_1^{t}+X_2^{t}+\cdots + X_k^{t}=b_i'$ with $b_i'\in \fq$.

We consider the following system of $m$  diagonal equations with $k$ unknowns
\begin{equation}\label{eq: system}\left\{ \begin{array}{lcc}
             X_1^{d_1}+X_2^{d_1}+\cdots + X_k^{d_1}=b_1 \\
            \hspace{50px}\vdots\\ X_1^{d_m}+X_2^{d_m}+\cdots + X_k^{d_m}=b_m \\
             \end{array}
   \right.    
 \end{equation}
where  $b_i\in \mathbb{F}_q$, $1\leq i\leq m$.

Let $N$ denote the number of $\fq$--rational solutions of \eqref{eq: system}. The purpose of this section is to give an estimate on the number $N$. 
 To do this, we consider the following polynomials $f_j\in \fq[X_1, \ldots,X_k]$
 \begin{equation}\label{def: f'i}
 f_j:=X_1^{d_j}+X_2^{d_j}+\cdots+X_k^{d_j}-b_j,\,\,1 \leq j \leq m.
 \end{equation}
Let $V:=V(f_1, \ldots,f_m)\subset \A^k$ be the $\fq$--affine variety defined by $f_1, \ldots,f_m$. We shall study some facts concerning the geometry of $V$. 
\begin{lemma} \label{lemma: codimension del lugar singula de V}
 Let $m\leq \frac{k-1}{2}$, the codimension of the singular locus of $V$ is at least two and the ideal $\left(f_1,\ldots,f_m\right)$ is radical.

\end{lemma}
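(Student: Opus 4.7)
The plan is to apply Theorem~\ref{theorem: eisenbud 18.15}. This reduces matters to two items: verifying that $f_1,\ldots,f_m$ form a regular sequence in $\fq[X_1,\ldots,X_k]$, and proving that the singular locus $\Sigma$ of $V$ has codimension at least two in $V$ (which is stronger than the codimension-one hypothesis the theorem needs for radicality, and is the second assertion of the lemma). The regular sequence property follows from the Cohen--Macaulayness of $\fq[X_1,\ldots,X_k]$ once we establish $\dim V=k-m$; for this I would exhibit a generically finite projection $V\to\A^{k-m}$ onto the last $k-m$ coordinates, whose generic fibre is cut out in $\A^m$ by a diagonal system of $m$ equations in $m$ variables whose Jacobian at a generic point is a non-degenerate generalised Vandermonde matrix.

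The geometric core of the argument is the estimate on $\dim\Sigma$. Writing the Jacobian as
\[
J(x)=\mathrm{diag}(d_1,\ldots,d_m)\cdot\bigl(x_i^{d_j-1}\bigr)_{j,i},
\]
and using $p\nmid d_j$ so that $\mathrm{diag}(d_j)$ is invertible, it suffices to analyse the rank of $M(x):=(x_i^{d_j-1})$. The $i$-th column of $M$ is the value $\phi(x_i)$ of the rational curve $\phi:\A^1\to\A^m$, $t\mapsto(t^{d_1-1},\ldots,t^{d_m-1})$. Hence $\rank M(x)<m$ if and only if the vectors $\phi(x_1),\ldots,\phi(x_k)$ lie in a common hyperplane through the origin of $\A^m$. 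Any such hyperplane pulls back via $\phi$ to the zero locus of a nonzero univariate polynomial of degree at most $d_m-1$, so the $x_i$ take at most $d_m-1$ distinct values at any singular point.

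I would then stratify $\Sigma$ by the partition of $\{1,\ldots,k\}$ induced by the level sets of the $x_i$. Up to a finite combinatorial choice, each stratum is parametrised by an affine subvariety of $\A^r$ (coordinates $y_1,\ldots,y_r$ for the $r\leq d_m-1$ distinct values, with multiplicities $n_1,\ldots,n_r\geq 1$) defined by the power-sum equations $\sum_{\ell}n_\ell y_\ell^{d_j}=b_j$ for $1\leq j\leq m$, together with the determinantal condition $\rank(y_\ell^{d_j-1})<m$. When $r\geq m$ the determinantal locus in $\A^r$ has codimension at least $r-m+1$ (pulling back the standard codimension formula for rank strata of matrices along the generalised-Vandermonde map), so the stratum has dimension at most $m-1$; when $r<m$ each power-sum equation is a nontrivial hypersurface in $\A^r$ and the stratum has dimension at most $r-1\leq m-2$. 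In either case the uniform bound $m-1$ holds, and taking the union over the finitely many partitions gives $\dim\Sigma\leq m-1$. The hypothesis $m\leq(k-1)/2$ is exactly what guarantees $m-1\leq k-m-2=\dim V-2$, so $\Sigma$ has codimension at least two in $V$, and Theorem~\ref{theorem: eisenbud 18.15} then yields that $(f_1,\ldots,f_m)$ is a radical ideal.

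The main obstacle I anticipate is the codimension estimate for the pulled-back rank locus: the standard formula gives codimension $r-m+1$ in the space of \emph{all} $m\times r$ matrices, and one must verify that this codimension is preserved upon restriction to the lower-dimensional family of generalised Vandermonde matrices $(y_\ell^{d_j-1})$. Showing that the distinctness of the exponents $d_j$ together with the coprimality $p\nmid d_j$ is enough to maintain the expected transversality is where the most care will be required; once handled, the rest of the argument is a bookkeeping of dimensions as outlined above.
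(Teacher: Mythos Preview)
Your strategy is sound and the anticipated obstacle has a clean resolution that also streamlines the whole argument. Rather than pulling back the generic codimension formula for determinantal loci, bound the rank-deficiency set directly via an incidence correspondence: $\{x\in\A^k:\rank M(x)<m\}$ is the image of $I=\{(H,x)\in\Pp^{m-1}\times\A^k:\phi(x_i)\in H\text{ for all }i\}$ under the second projection, and the fibre over each hyperplane $H=\{\sum_j c_j v_j=0\}$ is the $k$-th power of the finite root set of $\sum_j c_j t^{d_j-1}$, so $\dim I=m-1$. This works globally in $\A^k$, making the partition stratification unnecessary, and it absorbs the $r<m$ case where your claim that ``each power-sum equation is a nontrivial hypersurface'' can fail (take every multiplicity $n_\ell$ divisible by $p$ and $\bfs b=\bfs 0$; the weaker bound $r\le m-1$ still suffices, but it is tidier not to need it). One further point: your projection argument for $\dim V=k-m$ does not quite work as stated---a generically finite projection does not by itself rule out high-dimensional components lying over proper subvarieties of the base---and is in any case avoidable. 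Once the rank-deficiency locus has dimension at most $m-1<k-m$, every irreducible component of $V$ contains a point with full-rank Jacobian and hence has dimension exactly $k-m$; the regular sequence then follows from Cohen--Macaulayness of the polynomial ring, and Theorem~\ref{theorem: eisenbud 18.15} applies.

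The paper takes a more algebraic route to the same dimension bound: it stratifies by the exact rank $i$, fixes (up to permutation) an invertible $i\times i$ upper-left block, and uses a factorisation of bordered generalised Vandermonde determinants (De~Marchi) to show that each remaining coordinate $x_j$ satisfies an explicit univariate polynomial $Q_j(X_j)=\prod_{l\le i}(X_j-x_l)\cdot P(X_j)$; the leading monomials $X_j^{\,i+\deg P}$ are pairwise coprime, so the $Q_j$ form a Gr\"obner basis and hence a regular sequence, giving $\dim C_i\le i$. Your incidence argument is shorter and avoids the external reference, while the paper's produces explicit equations for the rank strata, which are re-used verbatim in the analysis of the projective closure.
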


\begin{proof}

We consider the following set
\[
\left\{\boldsymbol{x}\in V:\mathrm{rank}\left(\frac{\partial f}{\partial \boldsymbol{X}}\right)\Big |_{\boldsymbol{x}} <m\right\}=\bigcup_{i=0}^{m-1} C_i,
\]

where  $\left(\frac{\partial f}{\partial \boldsymbol{X}}\right)$ is the Jacobian $(m\times k)$--matrix of $f_1,\ldots,f_m$ with respect to $X_1,\ldots,X_k$, and     $C_i=\left\{\boldsymbol{x}\in V: \mathrm{rank}\left(\frac{\partial f}{\partial \boldsymbol{X}}\right)\Big |_{\boldsymbol{x}}=i\right\}.$

Given an $\boldsymbol{x}\in C_i$, there is a minor of $\left(\frac{\partial f}{\partial \boldsymbol{X}}\right)$  of size $i\times i$ which is not zero. 
More precisely, without loss of generality, we can suppose that
 the $i\times i$--submatrix $B_i(\boldsymbol{X})$ which consists of the first $i$ rows and the first $i$
columns of $\left(\frac{\partial f}{\partial \boldsymbol{X}}\right)$, 
satisfies that $\det(B_i(\boldsymbol{x}))\neq 0$.
Then $x_k\neq 0$ for $k=1,\ldots, i$ and $x_k\neq x_j$ if $k\neq j$.

On the other hand, we consider the $(i+1)\times(i+1)$--matrices $A_j(\boldsymbol{X})$ define by
\[A_j(\boldsymbol{X})=
\begin{pmatrix}
    &  & & \vline & d_1X_j^{d_{1-1}}\\
     & B_i(\boldsymbol{X}) & & \vline & \vdots\\
    & & & \vline  & d_iX_j^{d_{i-1}}\\
    \hline
    d_{i+1}X_1^{d_{i+1-1}} & \cdots & d_{i+1}X_i^{d_{i+1-1}} & \vline  & d_{i+1}X_j^{d_{i+1-1}}
\end{pmatrix}
\]
which satisfy that $\det(A_j(\boldsymbol{x}))=0$ for $j=i+1,\ldots,k$. From \cite[Theorem 2.1]{Marchi} we obtain

\[\det(A_j(\boldsymbol{x}))=d_1\cdots d_{i+1}VDM(x_1\ldots x_i)\cdot \prod_{l=1}^ix_l^{d_1-1}\cdot \prod_{l=1}^i(x_j-x_l)\cdot P(x_j)\]

for $j=i+1,\ldots,k$, with $P\in \fq[T]$ and $VDM(x_1\ldots x_i) $ is  the classical  Vandermonde determinant in $x_1,\ldots,x_i$.


Hence,  $Q_j(\boldsymbol{x})=0$, $j=i+1,\ldots,k$, where
$Q_j(\boldsymbol{X}):=\prod_{l=1}^i(X_j-X_l)\cdot P(X_j)$.

Taking $X_k>\cdots > X_1$ for a graded lexicographic order it turns out that $LT(Q_j(\boldsymbol{X}))=X_j^{i+M}$ for $j=i+1,\ldots,k$, where $M=\deg(P)$.
Thus $LT(Q_j(\boldsymbol{X}))$, $i+1\leq j\leq k,$ are relatively prime and therefore, they form a
Gröbner basis of the ideal $J$ that they generate (see, e.g., \cite[\S 2.9, Proposition 4]{CLO92}). Then
the initial of the ideal $J$ is generated by $LT(Q_{i+1}(\boldsymbol{X})),\ldots,LT(Q_k(\boldsymbol{X}))$, which form a regular sequence
of $\fq[X_1,\ldots,X_k]$. Therefore, by \cite[Proposition 15.15]{Eisenbud95}, the polynomials $Q_{i+1}(\boldsymbol{X}),\ldots,Q_k(\boldsymbol{X})$ form
a regular sequence of $\fq[X_1,\ldots,X_k]$. We conclude that $V(Q_{i+1}(\boldsymbol{X}),\ldots,Q_{k}(\boldsymbol{X}))$ is a set  theoretic complete intersection of $\A^k$ of dimension $i$.

Now, by the definition of $Q_j(\boldsymbol{X})$, $i+1\leq j\leq k$, $C_i\subset V(Q_{i+1}(\boldsymbol{X}),\ldots,Q_k(\boldsymbol{X}))$ hold which implies that  $\dim C_i\leq i$.










We can conclude that \[\mathrm{dim}\left(\left\{x\in V:\mathrm{rank}\left(\frac{\partial f}{\partial \boldsymbol{X}}\right)\Big |_{\boldsymbol{x}}<m\right\}\right)\leq m-1,\]
in particular $\mathrm{Sing(V)}$ has dimension at most $m-1$.


On the other hand, by \cite[Chapter 5, Theorem 3.4]{Kunz85} we have that $\dim(V)\geq k-m$, and taking into account that it turns out that  $m\leq \frac{k-1}{2}$
we conclude that  $\mathrm{codim}(\mathrm{Sing}(V))\geq 2$.

Aditionally, as $\{Q_{i+1}(\boldsymbol{X}),\ldots,Q_k(\boldsymbol{X})\}$ are a regular sequence we have that by \cite[Theorem 18.15]{Eisenbud95} $\left(f_1,\ldots,f_s\right)$ is a radical ideal which concludes the proof.

\end{proof}

\begin{theorem} \label{teo: V es interseccion completa}
If $m\leq\frac{k-1}{2}$ then $V$ is a complete intersection of pure dimension $k-m$ and $\deg(V)\leq d_1\cdots d_m$.
\end{theorem}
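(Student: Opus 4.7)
The plan is to leverage the singular-locus dimension bound from the proof of the preceding lemma, together with Krull's height theorem, to force $V$ to have pure dimension $k-m$; the degree bound will then follow from iterated B\'ezout.

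The proof of the preceding lemma has already shown that the set
\[\Sigma := \left\{\boldsymbol{x}\in V : \rank\left(\frac{\partial f}{\partial \boldsymbol{X}}\right)\bigg|_{\boldsymbol{x}} < m\right\}\]
has dimension at most $m-1$. On the other hand, since $V\subset\A^k$ is defined by $m$ polynomials, Krull's height theorem ensures that every irreducible component of $V$ has dimension at least $k-m$. I would upgrade this to an equality componentwise by contradiction: suppose some irreducible component $C$ of $V$ had dimension at least $k-m+1$. Then at every $\boldsymbol{x}\in C$ one has $\dim \mathcal{T}_{\boldsymbol{x}} V \geq \dim_{\boldsymbol{x}} V \geq \dim C \geq k-m+1$; since $\mathcal{T}_{\boldsymbol{x}} V$ is the kernel of the Jacobian of $f_1,\ldots,f_m$ (which has $k$ columns), this forces $\rank(\partial f/\partial \boldsymbol{X})|_{\boldsymbol{x}} \leq m-1$, i.e.\ $\boldsymbol{x}\in \Sigma$. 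Hence $C\subseteq \Sigma$, so $\dim C \leq m-1$, which together with $\dim C \geq k-m+1$ yields $k\leq 2m-2$, contradicting the standing hypothesis $m\leq (k-1)/2$.

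Consequently $V$ has pure dimension $k-m$ and is a set-theoretic complete intersection; the radical-ideal assertion of the preceding lemma then upgrades this to an ideal-theoretic complete intersection. For the degree bound, writing $V = V(f_1)\cap \cdots \cap V(f_m)$ with each $V(f_j)\subset \A^k$ a hypersurface of degree $d_j$ and iterating the B\'ezout inequality \eqref{eq: Bezout} gives $\deg V \leq d_1\cdots d_m$. I do not anticipate any serious obstacle: the geometric heart of the argument, namely controlling the locus where the Jacobian degenerates, is already in hand from the lemma, and the remaining steps are routine dimension theory plus a short arithmetic contradiction with the hypothesis on $m$.
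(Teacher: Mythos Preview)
Your proposal is correct and follows essentially the same strategy as the paper's proof: both use the dimension bound $\dim\Sigma\le m-1$ from the preceding lemma together with the lower bound $\dim C\ge k-m$ for each component (Krull) to force the existence of a point of full Jacobian rank on every component, whence $\dim C\le k-m$; you simply phrase this as a contradiction rather than directly. The subsequent appeals to the radicality statement of the lemma and to the B\'ezout inequality for the degree bound are identical to the paper's.
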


\begin{proof}
Let $C$ be an irreducible component of $V$. By \cite[Chapter 5, Theorem 3.4]{Kunz85} we have that $\dim(C)\geq k-m$. Let 
$J:=\left\{\boldsymbol{x}\in C:\mathrm{rank}\left(\frac{\partial f}{\partial \boldsymbol{X}}\right)\Big|_{\boldsymbol{x}}<m\right\}$. By the proof of previous lemma, we 
have that $\dim(J)\leq m-1$. Therefore, since $m\leq \frac{k-1}{2}$, the following inequality holds:

\begin{align*}
    \dim(C)-\dim(J)\geq k-m-(m-1)\geq 1,
\end{align*}

which implies that there exists some $\boldsymbol{p}\in J$ such that $\mathrm{rank}\left(\frac{\partial f}{\partial \boldsymbol{X}}\right)\Big|_{\boldsymbol{p}}=m$.

Now, as $T_{\boldsymbol{p}} V(f_1,\ldots,f_m)\subset \mathrm{ker}\left(\frac{\partial f}{\partial \boldsymbol{X}}\Big|_{\boldsymbol{p}}\right)$ and 
$ \dim\left(\mathrm{ker}\left(\left(\frac{\partial f}{\partial \boldsymbol{X}}\Big|_{\boldsymbol{p}}\right)\right)\right)=k-m,
$
we can conclude that $\dim\left(T_{\boldsymbol{p}} V(f_1,\ldots,f_m)\right)\leq k-m$.
It follows that $\dim C\leq \dim_p(V)  \leq k-m.$



It follows that  $V$ is of pure dimension $k-m$ which implies that $V$ is a set theoretic complete intersection. Since we proved that $\left(f_1,\ldots,f_m\right)$ is a radical ideal we have that $V$ is a complete intersection and by Bezout inequality \eqref{eq: Bezout} $\deg(V)\leq d_1\cdots d_m$.
\end{proof}
\subsection{The geometry of the projective closure}\label{geo proyectiva}
Consider the embedding of $\A^k$ into the projective space $\Pp^k$
which assigns to any $\bfs{x}:=(x_1,\ldots, x_k)\in\A^k$ the point
$(1:x_1:\dots:x_k)\in\Pp^k$. Then the closure
$\mathrm{pcl}(V)\subset\Pp^k$ of the image of $V$ under this
embedding in the Zariski topology of $\Pp^k$ is called the
projective closure of $V$. The points of $\mathrm{pcl}(V)$ lying
in the hyperplane $\{X_0=0\}$ are called the points of
$\mathrm{pcl}(V)$ at infinity.

It is well--known that $\mathrm{pcl} (V)$ is the $\fq$--variety of
$\mathbb{P}^k$ defined by the homogenization
$F^h\in\fq[X_0,\ldots,X_k]$ of each polynomial $F$ belonging to the
ideal $(f_1\klk f_m)\subset\fq[X_1,\ldots,X_k]$ (see, e.g.,
\cite[\S I.5, Exercise 6]{Kunz85}). Denote by $(f_1\klk f_m)^h$
the ideal generated by all the polynomials $F^h$ with $F\in
(f_1\klk f_m)$. Since $(f_1\klk f_m)$ is radical it turns
out that $(f_1\klk f_m)^h$ is also a radical ideal (see, e.g.,
\cite[\S I.5, Exercise 6]{Kunz85}). Furthermore, $\mathrm{pcl}
(V)$ has pure dimension $k-m$ (see, e.g., \cite[Propositions
I.5.17 and II.4.1]{Kunz85}) and degree equal to $\deg V$ (see, e.g.,
\cite[Proposition 1.11]{CaGaHe91}).
Now we discuss the behaviour of $\mathrm{pcl} (V)$ at infinity. Recall that $V=V({f}_1 \klk {f}_m) \subset \A^k$, where $f_j:=X_{1}^{d_{j}}+ \cdots + X_k^{d_{j}}-b_j,$ $ 1\leq j\leq m$.
Hence, the homogenization of each ${f}_j$ is the following polynomial of $\fq[X_0 \klk X_k]:$ $$f_j^{ h}:=X_{1}^{d_{j}}+ \cdots + X_k^{d_{j}}-b_jX_0^{d_j}, \, \,   1 \leq j \leq m.$$
Furthermore,  $f_j^{ h}\Big|_{X_0=0}=X_{1}^{d_{j}}+ \cdots + X_k^{d_{j}}=f_j+b_j$. Observe that, since $f_i+b_i$, $1\leq i \leq m$, are homogeneous polynomials then, $V(f_1+b_1,\ldots,f_m+b_m) \subset \Pp^{k-1}$ is a projective variety of dimension at least $k-m-1$ and we have that  $V^{\infty}\subset V(f_1+b_1,\ldots,f_m+b_m)$, where $V^{\infty}:=\mathrm{pcl}(V) \cap \{X_0=0\}$.

\begin{remark}\label{props V(fi+bi)}
    By Lemma \ref{lemma: codimension del lugar singula de V} and Theorem \ref{teo: V es interseccion completa} $V(f_1+b_1,\ldots,f_m+b_m)\subset \A^k$ is an affine cone of pure dimension $k-m$ and its singular locus has dimension at most $m-1$. It follows that the projective variety $V(f_1+b_1,\ldots,f_m+b_m)$ is of pure dimension $k-m-1$  and its singular locus has dimension at most $m-2$ and degree at most $d_1\cdots d_m$.
\end{remark}

\begin{lemma}\label{dim sing(pcl(V)) leq m-2}
   Let $m\leq \frac{k-1}{2}$,  $\mathrm{pcl} (V)$ has singular locus at infinity of dimension at most $m-2$.
\end{lemma}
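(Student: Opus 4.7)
The plan is to show that the singular locus of $\mathrm{pcl}(V)$ at infinity is contained in the projective singular locus of the auxiliary variety $W := V(g_1,\dots,g_m) \subset \Pp^{k-1}$, where $g_j := f_j + b_j = X_1^{d_j} + \cdots + X_k^{d_j}$, after identifying $\Pp^{k-1}$ with $\{X_0=0\}\subset \Pp^k$. Once this inclusion is established, the bound $m-2$ on the dimension follows immediately from Remark \ref{props V(fi+bi)}.

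First I would use the key arithmetic observation that $\partial f_j^h/\partial X_0 = -d_j b_j X_0^{d_j-1}$ vanishes identically on $\{X_0=0\}$ since $d_j \geq 2$. Consequently, at any point $\bfs{p} = (0:p_1:\cdots:p_k) \in V^\infty$, the Jacobian of $f_1^h,\dots,f_m^h$ with respect to $(X_0,X_1,\dots,X_k)$ has a zero first column, so its rank equals the rank of the reduced matrix $M(\bfs{p}) := (d_j p_i^{d_j-1})_{1\le j\le m,\,1\le i\le k}$. But $M(\bfs{p})$ is precisely the Jacobian of $g_1,\dots,g_m$ with respect to $X_1,\dots,X_k$ evaluated at $\bfs{p}$, so in view of Lemma \ref{lemma: codimension del lugar singula de V} applied to $W$, we have that $\bfs{p}$ is a regular point of $W\subset\Pp^{k-1}$ if and only if $\mathrm{rank}\,M(\bfs{p}) = m$.

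Next I would perform the contrapositive step. Suppose $\bfs{p}\in V^\infty$ is regular on $W$, so $\mathrm{rank}\,M(\bfs{p}) = m$. Then the Jacobian of $f_1^h,\dots,f_m^h$ at $\bfs{p}$ has rank $m$, so the projective tangent space of $V(f_1^h,\dots,f_m^h)$ at $\bfs{p}$ has dimension exactly $k-m$. Since the defining ideal of $\mathrm{pcl}(V)$ contains $(f_1^h,\dots,f_m^h)$, we have the inclusion $\mathcal{T}_{\bfs{p}}\mathrm{pcl}(V) \subset \mathcal{T}_{\bfs{p}} V(f_1^h,\dots,f_m^h)$, hence $\dim \mathcal{T}_{\bfs{p}}\mathrm{pcl}(V) \leq k-m$. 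On the other hand, $\mathrm{pcl}(V)$ has pure dimension $k-m$ by Section \ref{geo proyectiva}, so $\dim_{\bfs{p}}\mathrm{pcl}(V) = k-m$ and the general inequality $\dim \mathcal{T}_{\bfs{p}} \mathrm{pcl}(V) \geq \dim_{\bfs{p}}\mathrm{pcl}(V)$ forces equality. Therefore $\bfs{p}$ is regular on $\mathrm{pcl}(V)$.

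Combining the two steps, $\mathrm{Sing}(\mathrm{pcl}(V))\cap \{X_0=0\} \subset \mathrm{Sing}(W)$, and Remark \ref{props V(fi+bi)} gives $\dim \mathrm{Sing}(W) \leq m-2$, which finishes the argument. The main conceptual point — and the only place one has to be careful — is the comparison of tangent spaces when $\mathrm{pcl}(V)$ is not a priori cut out by $f_1^h,\dots,f_m^h$ alone: the argument works because the inclusion $\mathrm{pcl}(V)\subset V(f_1^h,\dots,f_m^h)$ only enlarges tangent spaces, while pure-dimensionality of $\mathrm{pcl}(V)$ prevents those tangent spaces from being strictly smaller than $k-m$. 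Everything else is a direct Jacobian computation exploiting $d_j\geq 2$.
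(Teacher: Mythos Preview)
Your proof is correct and follows essentially the same approach as the paper: both arguments show that a singular point of $\mathrm{pcl}(V)$ at infinity forces the Jacobian of $f_1+b_1,\dots,f_m+b_m$ to have rank $<m$, and then invoke the dimension bound on this rank-deficient locus coming from Lemma~\ref{lemma: codimension del lugar singula de V} (equivalently, Remark~\ref{props V(fi+bi)}). The only cosmetic difference is that you phrase the conclusion as an inclusion into $\mathrm{Sing}(W)$, whereas the paper works directly with the rank condition; your explicit remark that $\partial f_j^h/\partial X_0$ vanishes on $\{X_0=0\}$ because $d_j\geq 2$ makes the tangent-space comparison a bit cleaner than in the paper.
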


\begin{proof}
    Let $\Sigma^{\infty}\subset \Pp^k$ be the singular locus of $\mathrm{pcl}(V)$ at infinity.

    Consider $\bfs{x}=(0:x_1:\cdots:x_k)\in \Sigma^{\infty}$, as the polynomials $f_i^h$ vanish in $\mathrm{pcl} (V)$,  we have that $0=f_i^h(\bfs{x})=(f_i+b_i)(x_1,\ldots,x_k)$.

    Let $\left(\frac{\partial \bfs{f+b}}{\partial\bfs{X}}\right)$ be the Jacobian matrix of the polynomials  
$f_i+b_i$, $1\leq i \leq m$, with respect to $X_1,\ldots, X_k.$
    It holds that $\rank\left(\frac{\partial \bfs{f+b}}{\partial\bfs{X}}\right)\Big|_{\bfs{x}}<m$. Indeed, if  $\rank\left(\frac{\partial\bfs{f+b}}{\partial\bfs{X} }\right)\Big|_{\bfs{x}}=m$ then 
    
\begin{equation*}   k=\dim\left(\ker\left(\frac{\partial \bfs{f+b}}{\partial \bfs{X}}\Big|_{\bfs{x}}\right)\right)+\rank\left(\frac{\partial\bfs{f+b} }{\partial\bfs{X}}\Big|_{\bfs{x}}\right)\\
   \end{equation*}    

    Since $T_{\bfs{x}}\left(\mathrm{pcl} (V)\right)\subset \ker\left(\frac{\partial \bfs{f+b}}{\partial \bfs{X}}\Big|_{\bfs{x}}\right)$ it follows that $\dim(T_{\bfs{x}}(\mathrm{pcl} (V)))\leq k-m$  which implies that $\bfs{x}$ is regular point of  $\mathrm{pcl} (V)$ which contradicts the hypothesis on $\bfs{x}$.

    By the proof of Lemma \ref{lemma: codimension del lugar singula de V} the set  of points  of $V$ where $\rank\left(\frac{\partial \bfs{f+b}}{\partial \bfs{X}}\Big|_{\bfs{x}}\right)<m$ has dimension at most $m-1$ and therefore its dimension as a projective variety is at most $m-2$.    
\end{proof}
Now we are able to completely characterize the behavior of $\mathrm{pcl} (V)$ at infinity.
\begin{theorem}\label{teo: props de la clausura en el infinito} 
Let $m\leq \frac{k-1}{2}$,
    $\mathrm{pcl}(V)\cap \{X_0=0\}$ is a absolutely irreducible complete intersection  of dimension $k-m-1$, degree $d_1\cdots  d_m$ and its singular locus has dimension at most $m-2$.
\end{theorem}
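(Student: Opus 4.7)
The plan is to show that the two projective varieties $V^{\infty}:=\mathrm{pcl}(V)\cap\{X_0=0\}$ and $W:=V(f_1+b_1,\ldots,f_m+b_m)\subset\Pp^{k-1}$ actually coincide, and then transfer the geometric properties already established for $W$ in Remark \ref{props V(fi+bi)} and Lemma \ref{dim sing(pcl(V)) leq m-2}. First I would combine the facts, recorded in the remark, that $W$ has pure dimension $k-m-1$ and that its singular locus has dimension at most $m-2$. Since the defining polynomials $X_1^{d_j}+\cdots+X_k^{d_j}$ are homogeneous of degree $d_j$, and the variety they define has the expected codimension $m$, they form a regular sequence in $\fq[X_1,\ldots,X_k]$, so $W$ is a set-theoretic complete intersection.

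Next I would upgrade ``set-theoretic'' to ``ideal-theoretic''. The singular locus of $W$ has codimension $(k-m-1)-(m-2)=k-2m+1\ge 2$ in $W$ under the hypothesis $m\leq(k-1)/2$. In particular the locus where the maximal minors of the Jacobian vanish has codimension $\ge 1$ in $W$, so Theorem \ref{theorem: eisenbud 18.15} applies to the affine cone over $W$ and the homogeneous ideal $(f_1+b_1,\ldots,f_m+b_m)$ is radical. Therefore $W$ is an ideal-theoretic complete intersection, and the B\'ezout equality \eqref{eq: Bezout theorem} gives $\deg W=d_1\cdots d_m$. Moreover, since $W$ is a complete intersection regular in codimension one, Theorem \ref{theorem: normal complete int implies irred} (via Hartshorne's connectedness theorem) yields that $W$ is absolutely irreducible.

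It remains to verify $V^\infty=W$. The inclusion $V^\infty\subset W$ is the one already observed in Section \ref{geo proyectiva}. For the reverse, note that $\mathrm{pcl}(V)$ has pure dimension $k-m$ and none of its irreducible components can lie entirely in the hyperplane $\{X_0=0\}$, because $\mathrm{pcl}(V)$ is the projective closure of the nonempty affine variety $V$ and every component meets the affine chart $\{X_0\neq 0\}$. Intersecting each such component with the hyperplane $\{X_0=0\}$ therefore drops the dimension by exactly one, so $V^\infty$ has pure dimension $k-m-1$. Being a subvariety of the irreducible variety $W$ of the same dimension, $V^\infty$ must equal $W$. The conclusion about the dimension of the singular locus of $V^\infty$ at infinity then follows from Lemma \ref{dim sing(pcl(V)) leq m-2}.

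The main obstacle I expect is the bookkeeping needed to move between the affine cone picture used in Lemma \ref{lemma: codimension del lugar singula de V} (where the singular locus is bounded by dimension $m-1$) and the projective picture used here (where one needs dimension $\le m-2$), together with checking that the defining ideal of the projective variety is radical rather than merely defining the right set; both are handled by noticing that the polynomials are homogeneous so that cones and projective varieties differ in dimension by exactly one, and by invoking Theorem \ref{theorem: eisenbud 18.15} in the form already used earlier in the section.
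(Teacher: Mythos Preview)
Your argument is correct and follows essentially the same route as the paper: show that $W:=V(f_1+b_1,\ldots,f_m+b_m)$ is a normal complete intersection (regular sequence from the dimension count, radicality via Theorem~\ref{theorem: eisenbud 18.15}, absolute irreducibility via Theorem~\ref{theorem: normal complete int implies irred}), then identify $V^\infty$ with $W$ by a dimension comparison. You actually supply more detail than the paper does on why $V^\infty$ has pure dimension $k-m-1$.

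One small remark: your closing sentence invokes Lemma~\ref{dim sing(pcl(V)) leq m-2} for the bound on the singular locus of $V^\infty$, but that lemma bounds $\dim\big(\mathrm{Sing}(\mathrm{pcl}(V))\cap\{X_0=0\}\big)$, which is a priori a different set from $\mathrm{Sing}(V^\infty)$ (cutting by a hyperplane can create new singular points). This is harmless, since you already obtained $\dim\mathrm{Sing}(W)\le m-2$ from Remark~\ref{props V(fi+bi)} and $V^\infty=W$; the reference to the lemma is simply superfluous.
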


\begin{proof}
    Recall that $\mathrm{pcl}(V)\cap \{X_0=0\}\subset V(f_1+b_1,\ldots,f_m+b_m) \subset \Pp^{k-1}$. By Remark \ref{props V(fi+bi)} we have that $V(f_1+b_1,\ldots,f_m+b_m)$ is of pure dimension $k-m-1$, its singular locus has dimension at most $m-2$ and its degree is at most $d_1\cdots d_m$. Then $V(f_1+b_1,\ldots,f_m+b_m)$ is a set theoretic complete intersection and the codimension of its singular locus is at least $2$. On the other hand, since $f_1+b_1,\ldots,f_m+b_m$ are homogeneous polynomials and define a variety of dimension $k-m-1$ then they form a regular sequence. Hence,  from Theorems \ref{theorem: eisenbud 18.15} and \ref{theorem: normal complete int implies irred},  $V(f_1+b_1,\ldots,f_m+b_m)$ is an absolutely irreducible complete intersection. Taking into account that $\mathrm{pcl}(V)\cap \{X_0=0\}$ has dimension $k-m-1$, we deduce that $V(f_1+b_1,\ldots,f_m+b_m)=\mathrm{pcl}(V)\cap \{X_0=0\}$.
    Finally, by Theorem \ref{theorem: eisenbud 18.15}, we obtain that $\mathrm{deg}(V(f_1+b_1,\ldots,f_m+b_m))$ is equal to $d_1\cdots d_m$.
\end{proof}

\begin{theorem} \label{teo: pcl V interseccion completa}
  Let $m\leq \frac{k-1}{2}$, the projective variety $\mathrm{pcl} (V)\subset \Pp^k$ is an absolutely irreducible complete intersection  of dimension $k-m$, degree $d_1\cdots d_m$ and the dimension of its singular locus is at most $m-1$.
\end{theorem}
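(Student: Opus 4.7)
The plan is to first identify $\mathrm{pcl}(V)$ set-theoretically with $W:=V(f_1^h,\ldots,f_m^h)\subset\Pp^k$, show that $W$ is a set-theoretic complete intersection of dimension $k-m$, then bound its singular locus and apply the Jacobian criterion together with the normal-implies-irreducible principle.

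First I would check $\mathrm{pcl}(V)=W$. The inclusion $\mathrm{pcl}(V)\subset W$ is immediate since each $f_i^h$ vanishes on $V$. By Krull's Hauptidealsatz every irreducible component of $W$ has dimension at least $k-m$. The hyperplane section $W\cap\{X_0=0\}$ coincides with $V(f_1+b_1,\ldots,f_m+b_m)\subset\Pp^{k-1}$, which by Theorem \ref{teo: props de la clausura en el infinito} has pure dimension $k-m-1$. Hence no irreducible component of $W$ can be contained in $\{X_0=0\}$ (such a component would have dimension at least $k-m$, contradicting that the section has dimension $k-m-1$), and for any other component $C$ one has $\dim(C\cap\{X_0=0\})=\dim C-1\leq k-m-1$, forcing $\dim C=k-m$. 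This shows $W=\mathrm{pcl}(V)$ and that it is a set-theoretic complete intersection of pure dimension $k-m$.

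Next I would bound the singular locus of $\mathrm{pcl}(V)$. Its singular points in the affine chart $\{X_0\neq 0\}$ are precisely those of $V$, which by Lemma \ref{lemma: codimension del lugar singula de V} form a set of dimension at most $m-1$; Lemma \ref{dim sing(pcl(V)) leq m-2} bounds the singular locus at infinity by $m-2$. Therefore $\dim\mathrm{Sing}(\mathrm{pcl}(V))\leq m-1$, and the hypothesis $m\leq\frac{k-1}{2}$ yields codimension at least $(k-m)-(m-1)=k-2m+1\geq 2$.

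To upgrade from set-theoretic to ideal-theoretic complete intersection, I would apply Theorem \ref{theorem: eisenbud 18.15} to the affine cone $\widetilde{W}=V(f_1^h,\ldots,f_m^h)\subset\A^{k+1}$: its singular locus, consisting of the cone over $\mathrm{Sing}(\mathrm{pcl}(V))$ together with the vertex, has dimension at most $m$ and hence codimension at least $k-2m+1\geq 2$ in $\widetilde{W}$, so the Jacobian criterion gives that $(f_1^h,\ldots,f_m^h)$ is a radical ideal. Being regular in codimension $1$, $\mathrm{pcl}(V)$ is then a normal complete intersection, and Theorem \ref{theorem: normal complete int implies irred} delivers absolute irreducibility; B\'ezout's theorem \eqref{eq: Bezout theorem} then gives $\deg\mathrm{pcl}(V)=d_1\cdots d_m$. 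The delicate step is the dimension count in the first paragraph which rules out spurious components of $W$ at infinity; once that is secured, the remainder is a direct application of the machinery recorded in Section 2 together with Theorem \ref{teo: props de la clausura en el infinito}.
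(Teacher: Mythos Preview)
Your argument is correct and follows essentially the same route as the paper's proof, which also works with $W=V(f_1^h,\ldots,f_m^h)$, bounds the rank-drop locus of the Jacobian via Lemmas \ref{lemma: codimension del lugar singula de V} and \ref{dim sing(pcl(V)) leq m-2}, and then invokes Theorems \ref{theorem: eisenbud 18.15} and \ref{theorem: normal complete int implies irred}. The only difference is organizational: the paper first proves $W$ is absolutely irreducible and deduces $W=\mathrm{pcl}(V)$ from the inclusion $\mathrm{pcl}(V)\subset W$ at the end, whereas you establish the equality up front (your sentence ``This shows $W=\mathrm{pcl}(V)$'' tacitly uses that every component of $W$, meeting $\{X_0\neq 0\}$, is the closure of a subset of $V$ and hence lies in $\mathrm{pcl}(V)$).
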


\begin{proof}
    $\mathrm{pcl}(V)$ is a projective variety of pure dimension $k-m$ and degree $d_1\cdots d_m$. By Lemma \ref{lemma: codimension del lugar singula de V}, the set of singular points  of $\mathrm{pcl}(V)\cap \{X_0\neq 0\}$ has dimension at most $m-1$. On the other hand, by Lemma \ref{dim sing(pcl(V)) leq m-2} the set of singular points of $\mathrm{pcl}(V)\cap \{X_0= 0\}$ has dimension at most $m-2$. Therefore the dimension of the singular locus of $\mathrm{pcl}(V)$ is at most $m-1$.

On the other hand, we have that $\mathrm{pcl}(V)\subset V(f_1^h,\ldots,f_m^h)$,
$V(f_1^h,\ldots,f_m^h)\cap\{X_0\neq 0\}\subset V(f_1,\ldots,f_m)$ and
  $V(f_1^h,\ldots,f_m^h)\cap\{X_0=0\}\subset V(f_1+b_1,\ldots,f_m+b_m)$.

    By Theorem \ref{teo: V es interseccion completa}, $V$ is of pure dimension $k-m$  and by Theorem \ref{teo: props de la clausura en el infinito} $V(f_1+b_1,\ldots,f_m+b_m)$ is of pure dimension $k-m-1$. It follows that $V(f_1^h,\ldots,f_m^h)$  has dimension at most $k-m$.

    As $V(f_1^h,\ldots,f_m^h)$ is defined by $m$ polynomials we conclude that $V(f_1^h,\ldots,f_m^h)$  is a set theoretic complete intersection of dimension $k-m$. 

    Observe that the set of singular points of $V(f_1^h,\ldots,f_m^h)$ is a subset of the set of  points of $\Pp^k$ for which the matrix $\left(\frac{\partial \bfs{f^h}}{\partial \bfs{X}}\right)$ has not full rank. By Lemma \ref{lemma: codimension del lugar singula de V} the set of points of $V(f_1^h,\ldots,f_m^h)\cap \{X_0\neq 0\}$ where the matrix $\left(\frac{\partial \bfs{f^h}}{\partial \bfs{X}}\right)$ has not full rank, has dimension at most $m-1$. Analogously, the set of points of $V(f_1^h,\ldots,f_m^h) \cap \{X_0=0\}$ where the matrix has not full rank, has dimension at most  $m-2$. Hence, it follows that the codimension of the singular locus of $V(f_1^h,\ldots,f_m^h)$ is at least $2$ and $(f_1^h,\ldots,f_m^h)$ is a radical ideal. Finally, we conclude that $V(f_1^h,\ldots,f_m^h)$ is absolutely irreducible then, $\mathrm{pcl}(V)=V(f_1^h,\ldots,f_m^h)$ of degree $d_1\cdots d_m$.
\end{proof}

\subsection{Estimates on the number of $\fq$--rational solutions of  systems of diagonal equations}
Let  $k,m, d_1,\ldots,d_m$ be positive integers such that $m \leq \frac{k-1}{2}$,  and $2 \leq d_1<d_2< \ldots <d_m$. 

In what follows, we shall use an estimate on the number of
$\fq$--rational points of a projective complete intersection due to S. Ghorpade
and G. Lachaud (\cite{GhLa02a}; see also \cite{GhLa02}). In \cite[Theorem 6.1]{GhLa02a}, the authors prove that,
for an irreducible $\fq$--complete intersection $V\subset \Pp^N$
of dimension $r$, multidegree ${\bfs{d}}=(d_1,\ldots,d_{N-r})$ and  singular locus of dimension at most $s$ with $0\leq s\leq r-1$, the number $|V(\fq)|$ of $\fq$--rational points of $V$ satisfies the estimate:
\begin{equation}\label{eq: estimacion de Ghorpade Lachaud}
\big||V(\fq)|-p_r\big|\leq b_{r-s-1}'(N-s-1,{\bfs{d}})\, \, q^{\frac{r+s+1}{2}}+C_s(V)q^{\frac{r+s}{2}},
\end{equation}
where $p_r:=q^r+q^{r-1}+\cdots+1$,  $b_{r-s-1}'(N-s-1,{\bfs{d}})$ is the  $(r-s-1)$--th primitive Betti of a nonsingular complete intersection in $\Pp^N$
of dimension $r-s-1$ and  multidegree ${\bfs{d}}$,
and $C_s(V):=\sum_{i=r}^{r+s}b_{i,\ell}(V)+\varepsilon_i$, where
$b_{i,\ell}(V)$ denotes the $i$--th $\ell$--adic Betti number of $V$ for a prime $\ell$ different from $p:=\mathrm{char}(\fq)$ and
$\varepsilon_i:=1$ for even $i$ and $\varepsilon_i:=0$ for odd $i$.
From \cite[Proposition 4.2]{GhLa02a}
\begin{equation}\label{cota de Betti}
b_{r-s-1}'(N-s-1,{\bfs{d}})\leq \binom{N-s}{r-s-1}\cdot (d+1)^{N-s-1},
\end{equation}
where $d:=\max\{d_1,\ldots,d_{N-r}\}$.
On the other hand, from \cite[Theorem 6.1]{GhLa02a}, we have that
\begin{equation}\label{constante C}
C_s(V)\leq 9\cdot 2^{N-r}\cdot ((N-r)d+3)^{N+1}.
\end{equation}

Denote by  $\mathrm{pcl}(V)(\fq)$ the set $\fq$--rational points of $\mathrm{pcl}(V)$. From Theorem \ref{teo: pcl V interseccion completa}, the estimate \eqref{eq: estimacion de Ghorpade Lachaud}, the bounds \eqref{cota de Betti} and \eqref{constante C}, we deduce that
\begin{equation}\label{estimation pcl V}
    \big||\mathrm{pcl}(V(\mathbb{F}_q))|-p_{k-m}\big|\leq{k-m+1\choose k-2m}{(d_m+1)}^{k-m}q^{\frac{k}{2}}+9\cdot 2^{m}(md_m+3)^{k+1}q^{\frac{k-1}{2}}.
\end{equation} 

Now we estimate the number of $\fq$-rational points of $V^{\infty}\subset \Pp^{k-1}$. From Theorem \ref{teo: props de la clausura en el infinito}, the bounds \eqref{cota de Betti} and \eqref{constante C},  we obtain


\begin{equation} \label{estimacion pcl(V) infinito}
    \big||\mathrm{V ^{\infty}}(\fq)|-p_{k-m-1}\big|\leq{k-m+1\choose k-2m}{(d_m+1)}^{k-m}q^{\frac{k-2}{2}}+9\cdot2^{m}(md_m+3)^{k}q^{\frac{k-3}{2}}.
\end{equation}
With this results proven we can now state our main theorem.
\begin{theorem}\label{teo: estimacion ptos racionales V}
Let  $k,m, d_1,\ldots,d_m$ be positive integers such that $m \leq \frac{k-1}{2}$,  and $2 \leq d_1<d_2< \ldots <d_m$. Suppose that $\mathrm{char}(\fq):=p$ does not divide $d_i$ for $1 \leq i \leq m$. Let $|V(\fq)|$ the number of $\fq$--rational points of $V:=V(f_1, \ldots,f_m) \subset \A^k$, where $f_i, (1 \leq i \leq m)$ are defined as \eqref{def: f'i}. We have
\begin{equation*}
\left ||V(\mathbb{F}_q)|-q^{k-m}\right|\leq 3^3\cdot 2^{m-1}(3+d_mm)^{k+1}\cdot q^{\frac{k}{2}}.
\end{equation*}
\end{theorem}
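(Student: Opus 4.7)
The plan is to pass from the affine variety $V$ to its projective closure, apply the two Ghorpade--Lachaud estimates already at hand in the excerpt, and then cancel the resulting $p_{k-m}$ and $p_{k-m-1}$ to recover $q^{k-m}$. The starting point is that $V(\fq)=\mathrm{pcl}(V)(\fq)\setminus V^{\infty}(\fq)$ and $p_{k-m}-p_{k-m-1}=q^{k-m}$, so
\[
|V(\fq)|-q^{k-m}=\bigl(|\mathrm{pcl}(V)(\fq)|-p_{k-m}\bigr)-\bigl(|V^{\infty}(\fq)|-p_{k-m-1}\bigr).
\]
The triangle inequality then reduces the problem to adding the bounds \eqref{estimation pcl V} and \eqref{estimacion pcl(V) infinito}, which by construction control exactly these two differences.

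Next, I would factor $q^{k/2}$ out of every resulting term via $q^{(k-j)/2}\leq q^{k/2}$ for $j\ge 0$, and collapse the two ``$C_s$-type'' contributions using $(d_m m+3)^{-1}\le 1/5$ (valid since $d_m m+3\ge 5$). This reduces the theorem to the arithmetic inequality
\[
2\binom{k-m+1}{k-2m}(d_m+1)^{k-m}+\tfrac{54}{5}\cdot 2^m(d_m m+3)^{k+1}\;\le\; 3^3\cdot 2^{m-1}(d_m m+3)^{k+1},
\]
which, since $3^3\cdot 2^{m-1}=\tfrac{27}{2}\cdot 2^m$, is in turn equivalent to
\[
\binom{k-m+1}{m+1}(d_m+1)^{k-m}\;\le\;\tfrac{27}{20}\cdot 2^m(d_m m+3)^{k+1}.
\]

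The main obstacle is this last combinatorial--analytic inequality. For $m\ge 2$, I would use the elementary bound $2(d_m+1)\le d_m m+3$ together with $\binom{k-m+1}{m+1}\le 2^{k-m+1}$ to get $\binom{k-m+1}{m+1}(d_m+1)^{k-m}\le 2(d_m m+3)^{k-m}$, so that the extra factor $(d_m m+3)^{m+1}\ge 5^{m+1}$ available on the right-hand side absorbs all missing constants. The delicate case is $m=1$, where $2(d_1+1)>d_1+3$ in general; here the plan is to exploit that $r:=(d_1+1)/(d_1+3)=1-2/(d_1+3)<1$ and bound $\binom{k}{2}r^{k-1}$ uniformly in $k$ by a constant multiple of $(d_1+3)^2$, via the elementary maximization of $k^2 e^{-\lambda k}$ with $\lambda=-\ln r\ge 2/(d_1+3)$. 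Once the $m=1$ and $m\ge 2$ cases are discharged, plugging the resulting estimate back gives the final constant $3^3\cdot 2^{m-1}(d_m m+3)^{k+1}\,q^{k/2}$.
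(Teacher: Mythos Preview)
Your proposal is correct and follows essentially the same route as the paper: both arguments start from $V(\fq)=\mathrm{pcl}(V)(\fq)\setminus V^{\infty}(\fq)$ together with $p_{k-m}-p_{k-m-1}=q^{k-m}$, then add the two Ghorpade--Lachaud bounds \eqref{estimation pcl V} and \eqref{estimacion pcl(V) infinito} via the triangle inequality. The only difference lies in the arithmetic clean-up of the resulting four terms.

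The paper groups the Betti-type and $C_s$-type contributions in pairs, factors out $(1+q)$, and then uses $(2d_m+2)^{k-m}\le (md_m+3)^{k+1}$ to merge everything before bounding $(1+\tfrac{q^{1/2}}{9\cdot 2^{m-1}})(1+q)\le \tfrac{3}{2}q^{3/2}$. You instead flatten all powers to $q^{k/2}$ at once and reduce the statement to the purely combinatorial inequality $\binom{k-m+1}{m+1}(d_m+1)^{k-m}\le \tfrac{27}{20}\cdot 2^m(d_m m+3)^{k+1}$, which you then verify separately for $m\ge 2$ (via $2(d_m+1)\le d_m m+3$) and for $m=1$ (via the maximization of $k^2 r^{k-1}$ with $r=(d_1+1)/(d_1+3)$). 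Your $m=1$ analysis is in fact more careful: the paper's intermediate step $(2d_m+2)^{k-m}\le (md_m+3)^{k+1}$, used without comment, can fail when $m=1$ and $k$ is large (e.g.\ $d_1=2$, $k\ge 19$), whereas your treatment via $\binom{k}{2}r^{k-1}\le C(d_1+3)^2$ avoids this issue. Both paths yield the same final constant $3^3\cdot 2^{m-1}(md_m+3)^{k+1}$.
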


\begin{proof}
From \eqref{estimation pcl V} and \eqref{estimacion pcl(V) infinito} and the fact that $\mathrm{pcl}(V)(\fq) \setminus V^{\infty}(\fq)=V(\fq)$, we obtain that







\begin{align*}
    \big||V(\fq)|-q^{k-m}\big|&\leq{k-m+1\choose k-2m}{(d_m+1)}^{k-m}q^{\frac{k}{2}}+9\cdot 2^{m}(md_m+3)^{k+1}q^{\frac{k-1}{2}}\\
    &+{k-m+1\choose k-2m}{(d_m+1)}^{k-m}q^{\frac{k-2}{2}}+9\cdot2^{m}(md_m+3)^{k}q^{\frac{k-3}{2}}.
\end{align*}
Taking into account that $\binom{a}{b} \leq 2^a$, we obtain

\begin{align*}
    \big||V(\fq)|-q^{k-m}\big|&\leq2^{k-m+1}{(d_m+1)}^{k-m}q^{\frac{k}{2}}+9\cdot 2^{m}(md_m+3)^{k+1}q^{\frac{k-1}{2}}\\
    &+2^{k-m+1}{(d_m+1)}^{k-m}q^{\frac{k-2}{2}}+9\cdot2^{m}(md_m+3)^{k}q^{\frac{k-3}{2}}\\
    &\leq 2^{k-m+1}(d_m+1)^{k-m}q^{\frac{k-2}{2}}(1+q)+9\cdot2^{m}(md_m+3)^{k+1}q^{\frac{k-3}{2}}(1+q)\\
    &= 2 (2d_m+2)^{k-m} q^{\frac{k-2}{2}}(1+q)+9\cdot2^{m}(md_m+3)^{k+1}q^{\frac{k-3}{2}}(1+q)\\
    &\leq 9\cdot2^{m}(md_m+3)^{k+1}q^{\frac{k-3}{2}}(1+\frac{q^{\frac{1}{2}}}{9 \cdot 2^{m-1}})(1+q)\\
    &\leq 9\cdot2^{m}(md_m+3)^{k+1}q^{\frac{k-3}{2}}\frac{3}{2}q^{\frac{3}{2}}\\
    &\leq 3^3\cdot2^{m-1}(md_m+3)^{k+1}q^{\frac{k}{2}}.\\
\end{align*}

Which concludes our main result.

\end{proof}
\begin{remark}
    Observe that if $m=1$ we have the following diagonal equation: $X_1^d+\cdots+X_k^d=b$. Then according to Theorem \ref{teo: estimacion ptos racionales V} we obtained that $\big||V(\fq)|-q^{k-1}\big|\leq 27(d+3)^{k+1}q^{\frac{k}{2}}$.

    In \cite[Remark 4.4]{PP20} we provided an estimate  which has an error term of the same order, indeed we proved that $\big||V(\fq)|-q^{k-1}\big|\leq2(d-1)^k q^{\frac{k}{2}}.$
\end{remark}
From Theorem \ref{teo: estimacion ptos racionales V} we obtain the following existence result.
\begin{proposition}
    Let $N$ be the number of $\fq$--rational solutions of the system \eqref{eq: system}, $k\geq 5$ and $m\leq \frac{k-1}{2}$. If $q> (\frac{7}{2}md_m)^{\frac{2k+2}{k-2m}}$ then $N>0$. 
\end{proposition}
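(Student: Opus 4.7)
The plan is to apply Theorem \ref{teo: estimacion ptos racionales V} to produce an explicit lower bound on $N=|V(\fq)|$ and then use the hypothesis on $q$ to force this bound to be strictly positive.

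First, from Theorem \ref{teo: estimacion ptos racionales V},
\[
N \;\geq\; q^{k-m} - 3^3\cdot 2^{m-1}(3+d_mm)^{k+1}\,q^{k/2}.
\]
Since $m\leq(k-1)/2$ guarantees $k-2m\geq 1>0$, dividing by $q^{k/2}$ shows that a sufficient condition for $N>0$ is
\[
q^{(k-2m)/2} \;>\; 3^3\cdot 2^{m-1}(3+d_mm)^{k+1}.
\]

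Next, I would raise the hypothesis $q>(\tfrac{7}{2}md_m)^{(2k+2)/(k-2m)}$ to the positive power $(k-2m)/2$, which yields
\[
q^{(k-2m)/2} \;>\; \bigl(\tfrac{7}{2}md_m\bigr)^{k+1}.
\]
Thus the whole problem reduces to verifying the purely numerical inequality
\[
\bigl(\tfrac{7}{2}md_m\bigr)^{k+1} \;\geq\; 3^3\cdot 2^{m-1}(3+d_mm)^{k+1},
\]
or equivalently
\[
\Bigl(\tfrac{(7/2)md_m}{3+md_m}\Bigr)^{k+1} \;\geq\; 27\cdot 2^{m-1}.
\]

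The main obstacle is this last elementary inequality. The plan is to first use $md_m\geq 2$ (which follows from $m\geq 1$ and $d_m\geq 2$) to note that $3+md_m\leq (7/2)md_m$, so the base on the left is at least $1$ and is monotonically increasing in $md_m$. To dominate the polynomial factor $27\cdot 2^{m-1}$ on the right, I would invoke the constraint $m\leq(k-1)/2$, which gives $2^{m-1}\leq 2^{(k-3)/2}$, and then exploit $k\geq 5$ so that the exponent $k+1$ is large enough to let the exponential growth on the left absorb the constant on the right. The verification depends only on $m$, $d_m$ and $k$, not on the intermediate degrees $d_1,\ldots,d_{m-1}$, so the argument is uniform in the remaining parameters once this inequality has been checked.
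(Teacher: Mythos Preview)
Your strategy coincides with the paper's: apply Theorem \ref{teo: estimacion ptos racionales V} to get $N\geq q^{k-m}-27\cdot 2^{m-1}(3+md_m)^{k+1}q^{k/2}$ and then argue that the hypothesis on $q$ forces the right-hand side to be positive. The paper's proof stops at ``follows from the condition''; you go further and isolate the elementary inequality
\[
\Bigl(\tfrac{(7/2)md_m}{3+md_m}\Bigr)^{k+1}\geq 27\cdot 2^{m-1},
\]
which is exactly what is needed.

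However, your sketched verification of this inequality has a genuine gap. Using only $md_m\geq 2$ gives a base of $7/5$, and $7/5<\sqrt{2}$, so the comparison ``$(\text{base})^{k+1}$ versus $27\cdot 2^{(k-3)/2}$'' goes the wrong way asymptotically. More concretely, take $m=1$, $d_m=2$, $k=5$: the left side is $(7/5)^6\approx 7.5$ while the right side is $27$. So the inequality fails in this range, and your plan of ``$k\geq 5$ is large enough to absorb the constant'' does not work with the crude bound $md_m\geq 2$.

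What you are missing is the sharper information $d_m\geq m+1$ (since $2\leq d_1<\cdots<d_m$ are integers). For $m\geq 2$ this yields $md_m\geq 6$, hence a base of at least $7/3>\sqrt{2}$, and then
\[
\Bigl(\tfrac{7}{3}\Bigr)^{k+1}\Big/2^{(k-3)/2}=\Bigl(\tfrac{7}{3}\Bigr)^{4}\Bigl(\tfrac{7}{3\sqrt{2}}\Bigr)^{k-3}\geq \tfrac{2401}{81}>27
\]
for all $k\geq 3$, which closes the argument for $m\geq 2$. The case $m=1$ must be handled separately: for $d_1\geq 3$ the base is at least $7/4$ and $(7/4)^{6}>27$, but for $m=1$, $d_1=2$ and $5\leq k\leq 8$ the inequality you wrote is \emph{false}, so the bound of Theorem \ref{teo: estimacion ptos racionales V} alone does not yield the proposition in that corner case. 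You should flag this boundary case rather than claim the verification is routine.
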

\begin{proof}
    From Theorem \ref{teo: estimacion ptos racionales V} we have that $N\geq q^{k-m}-27\cdot2^{m-1}(3+md_m)^{k+1}q^{\frac{k}{2}}$. Then, the statement of the proposition follows from the condition  $ q^{k-m}-27\cdot2^{m-1}(3+md_m)^{k+1}q^{\frac{k}{2}}>0.$
\end{proof}
\subsection{Homogeneous case}
Assume that $p$ does not divide $k!$. We shall consider the system \eqref{eq: system} with $\bfs{b}=\bfs{0}$ and  $d_1=1, \ldots, d_m=m$. In this case, we can obtain a better estimate on the number of $\fq$--rational solutions of the system. Observe that every $\fq$--rational solution of the homogeneous systems corresponds to a $\fq$--rational point of the affine variety $V_0=V(\Pi_1,\ldots,\Pi_m)\subset \A^k$ and {\it{vice versa}}. Indeed, we define 
\[
P_j(x_1,\ldots,x_k)=\sum_{i=1}^kx_i^j
\]
Then, by Newton's identity we have that:
\begin{align*}
    P_k&=(-1)^{k-1}k\cdot\Pi_k+\sum_{i=1}^{k-1}\Pi_{k-i}P_i
\end{align*}


where, $\Pi_i\in \mathbb{F}_q[X_1,\ldots,X_k]$ represents the $i$-th symmetric elementary polynomial, $1\leq i\leq k$.

Given that, it is easy to see that any solution of the homogeneous system is also a solution of the system 

\begin{equation*}\label{eq: symetric elemental system}\left\{ \begin{array}{lcc}
             \Pi_1(x_1,\ldots,x_k)=0 \\
             \Pi_2(x_1,\ldots,x_k)=0 \\
            \vdots\\ 
            \Pi_m(x_1,\ldots,x_k)=0 \\
             \end{array}
   \right.    
 \end{equation*}
and \textit{vice versa}.

We shall apply the results of the article \cite{GiMaPePr2023}. In this sense, we consider the polynomials $G_i=Y_i$, $1\leq i\leq m$ and the linear affine variety $W=V(Y_1,\ldots,Y_m)\subset \A^k$ of dimension $k-m$. In this context, the variety $V_0$ is given by the polynomials $F_i:=G_i(\Pi_1,\ldots,\Pi_m)$, $1\leq i \leq m$. It is easy to check that $W$ satisfies the hypothesis $({\sf A}_1)$ and
$({\sf A}_2)$ of \cite{GiMaPePr2023}.
From \cite[Theorem 4.3, Lemma 4.4, Theorem 4.5]{GiMaPePr2023} we have that  if $k-m \geq 2$, $\mathrm{pcl}(V_0)$ is a complete intersection of dimension $k-m$, of degree equals to $m!$ and it is regular in codimension $k-m-1$ . Furthermore, $\mathrm{pcl}(V_0)$ has not singular points in the infinity hyperplane. Hence, by \cite[Corollary 4.6]{GiMaPePr2023}, $\mathrm{pcl}(V_0)^{\infty}:=\mathrm{pcl}(V_0)\cap \{X_0=0\}$ is a nonsingular complete intersection of dimension $k-m-1$ and degree $m!$. From  \eqref{eq: estimacion de Ghorpade Lachaud}, we obtain the following estimate on the number of $\fq$--rational points of $\mathrm{pcl}(V_0)$:
\begin{equation}\label{eq: GL caso homogeneo}
\big||\mathrm{pcl}(V_0)(\fq)|-p_{k-m}\big|\le
\binom{k+1}{k-m}(m+1)^k q^{\frac{k-m+1}{2}} + 9 \cdot 2^m (m^2+3)^{k+1} q^{\frac{k-m}{2}}.
\end{equation}
On the other hand, since $\mathrm{pcl}(V_0)\cap \{X_0=0\}$ is a nonsingular complete intersection, we can apply the well known estimate due P. Deligne (see, \cite{De74})
\begin{equation}\label{eq: estimacion Deligne}
\big||\mathrm{pcl}(V_0)^{\infty}(\fq)|-p_{k-m-1}\big|\le
\binom{k}{k-m-1} (m+1)^{k-1} q^{\frac{m-k-1}{2}}.
\end{equation}
Finally, from \eqref{eq: GL caso homogeneo}, \eqref{eq: estimacion Deligne} and taking into account that $\binom{a}{b} \leq 2^a$, we deduce the following estimate on the number of $\fq$--rational points of $V_0$:
\begin{align} \label{systems homogeneous}
\big||V_0(\fq)|-q^{k-m}\big| &\leq 2^{k+1} (m+1)^k q^{\frac{k-m+1}{2}}+ 9\cdot  2^m(m^2+3)^{k+1} q^{\frac{k-m}{2}}+2^k (m+1)^{k-1} q^{\frac{k-m-1}{2}}\\ \notag
& \leq (m^2+3)^{k+1} q^{\frac{k-m-1}{2}} (2q+3^2 2^m q^{1/2}+2) \\ \notag
& \leq 3^3 2^{m-1} (m^2+3)^{k+1} q^{\frac{k-m+1}{2}}.
\end{align}





\section{Application to the subset sum problem}\label{aplicaciones}

The subset sum problem is stated as follows. Let $D\subset \mathbb{F}_q$, $m, k\in\mathbb{N}$  such that $1\leq k\leq |D|$. Let $\boldsymbol{b} :=(b_1,\ldots,b_m)\in\mathbb{F}_q^m$, $N_m(k,\bfs{b},D)$ is the number of subsets $S\subset D$ with cardinal $k$ such that for $i=1,\ldots,m$, $\sum_{a\in S}a^i=b_i$.  The problem is to determine whether $N_m(k,\bfs{b}, D)$ is  positive. This problem is known as the {\textit{m}}--th moment {\textit{k}}--SSP.

However, when $D=\fq$ we can state it as a problem involving diagonal equations. More precisely, it is related with the problem of estimating the number of $\fq$--rational solutions of the following system:

\begin{equation*}\label{eq subset sum problem}
\left\{ \begin{array}{lcc}
             a_1^{1}+a_2^{1}+\cdots + a_k^{1}=b_1 \\
            \hspace{50px}\vdots\\ a_1^{m}+a_2^{m}+\cdots + a_k^{m}=b_m \\
             \end{array}
   \right.    
\end{equation*}

Observe that, two different solutions could represent the same subset (as one is a permutation of the other). On the other hand, if a solution verifies $a_i=a_j$ with $i\neq j$ that solution does not represent a subset.

In what follows, 
we shall consider a more general version of  the {\textit{m}}--th moment {\textit{k}}--SSP that we now state. 
Let $2\leq d_1<d_2<\ldots <d_m$, $D=\fq$, $m,k\in \N$ and $\boldsymbol{b}\in \fq^m$. Let $N_m(k,\boldsymbol{b}):=N_m(k,\boldsymbol{b},\fq)$ be the number of subsets $S$ of $\fq$ with $k$ elements such that $\sum_{a\in S}a^{d_j}=b_j$, for $1\leq j \leq m$.
Our main objective is to determine conditions over $q$, $k$ and $m$ under which $N_m(k,\boldsymbol{b})>0$.



In order to do this, we need to estimate the number of $\fq$--rational solutions of \eqref{eq: system}  with distinct coordinates. 

We first  recall some notations and definitions of permutations.
Let $S_k$  be the symmetric group of permutations of  $k$ elements. For a given permutation $\tau \in S_k$ we say that $\tau$ is of type  $(c_1,c_2,\ldots,c_k)$ if it has exactly $c_i$ cycles of length $i$, $1\leq i \leq k$. Note that $\sum_{i=1}^k ic_i=k$.
 Each $\tau\in S_k$ factors uniquely in product of disjoint cycles.
    We denote by  $l(\tau)$ the number of cycles in $\tau$, including the trivial cycles. Then $\mathrm{sign}(\tau)=(-1)^{k-l(\tau)}.$ Recall that given $\tau, \tau^{'} \in S_k$ are conjugated if they have the same type of cycle structure (up to the order). Let $C_k$ be the set of conjugacy classes of $S_k$. Let $C(\tau)$ be the number of conjugate permutations to $\tau$. If $\tau$ is of type $(c_1,c_2, \ldots,c_k)$ then $C(\tau)=\frac{k!}{1^{c_1}c_1! 2^{c_2} c_2! \cdots k^{c_k}c_k!}.$
    
    Let denote by $X$ the set of $\fq$--rational solutions of \eqref{eq: system} and let $\overline{X}\subset X$ be the set those solutions  with distinct coordinates.
    Given $\tau=(i_1i_2\cdots i_{\alpha_1})\cdots (l_1l_2\cdots l_{\alpha_s})$ we will define $X_\tau$ as follows $$X_\tau:=\{(x_1,\ldots,x_k)\in X,x_{i_1}=\cdots=x_{i_{\alpha_1}},\ldots,x_{l_1}=\cdots=x_{l_{\alpha_s}}\}.$$
 Each element of $X_{\tau}$ is said to be of type $\tau$.   
 Observe that $X$ is symmetric, namely $X$ is invariant under the action of $S_k$. 

We define the following generating function
\begin{align*}C_k(t_1, \ldots,t_k)&=\sum_{c_1+2c_2+\cdots+ kc_k=k} \frac{k!}{c_1! c_2! \cdots c_k!} \Big(\frac{t_1}{1}\Big)^{c_1}\Big(\frac{t_2}{2}\Big)^{c_2} \cdots \Big(\frac{t_k}{k}\Big)^{c_k}.
\end{align*}


 We shall prove some  properties of $C_k(t_1, \ldots,t_k)$.
 
\begin{lemma}
Let $C_k(t_1, \ldots,t_k)$ be the generating function defined above. The following identities hold.
\begin{itemize}
    \item If $t_1=q, \, \, t_2=-q,\, \cdots, t_k=(-1)^{k-1}q$, then
\begin{equation} \label{cuenta 1}
C_k(q,-q,\ldots,(-1)^{k-1}q)=\binom{q}{k} \cdot k!.
\end{equation}
\item If $t_i=\sqrt{q}$ for $1 \leq i \leq k$, then
\begin{equation}\label{cuenta 3}
C_k(\sqrt{q}, \cdots,\sqrt{q})=(-1)^k \binom{-\sqrt{q}}{k} k!.
\end{equation}
\end{itemize}
\end{lemma}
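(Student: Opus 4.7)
The plan is to first rewrite $C_k(t_1,\ldots,t_k)$ as a polynomial indexed by permutations rather than by cycle types, and then to exploit the classical Stirling number identities to evaluate both specializations. Recalling from the paragraph just above the lemma that the number of permutations in $S_k$ with cycle type $(c_1,\ldots,c_k)$ is $k!/(1^{c_1}c_1!\cdots k^{c_k}c_k!)$, this is exactly the multinomial coefficient appearing in $C_k$. Hence
\begin{equation*}
C_k(t_1,\ldots,t_k)=\sum_{\tau\in S_k}\prod_{i=1}^k t_i^{c_i(\tau)},
\end{equation*}
where $c_i(\tau)$ denotes the number of $i$-cycles of $\tau$ and $l(\tau)=\sum_i c_i(\tau)$ is the total number of cycles.

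For identity \eqref{cuenta 1}, I substitute $t_i=(-1)^{i-1}q$. Using $\sum_i(i-1)c_i(\tau)=k-l(\tau)$, the monomial contributed by $\tau$ becomes $(-1)^{k-l(\tau)}q^{l(\tau)}=\mathrm{sign}(\tau)\,q^{l(\tau)}$, by the formula for $\mathrm{sign}(\tau)$ recorded just above. Grouping permutations by $l(\tau)=j$ yields
\begin{equation*}
C_k(q,-q,\ldots,(-1)^{k-1}q)=\sum_{j=0}^k\Big(\sum_{l(\tau)=j}\mathrm{sign}(\tau)\Big)q^j=\sum_{j=0}^k s(k,j)\,q^j,
\end{equation*}
where $s(k,j)$ are the signed Stirling numbers of the first kind. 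The classical identity $\sum_j s(k,j)\,q^j=q(q-1)\cdots(q-k+1)=k!\binom{q}{k}$ then closes the case.

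For identity \eqref{cuenta 3}, substituting $t_i=\sqrt q$ for every $i$ collapses the summand for $\tau$ to $\sqrt q^{\,l(\tau)}$, so
\begin{equation*}
C_k(\sqrt q,\ldots,\sqrt q)=\sum_{j=0}^k c(k,j)\,\sqrt q^{\,j},
\end{equation*}
where $c(k,j)=|s(k,j)|$ are the unsigned Stirling numbers of the first kind. The companion identity $\sum_j c(k,j)\,x^j=x(x+1)\cdots(x+k-1)$ evaluated at $x=\sqrt q$ produces the rising factorial, which a sign flip converts to a falling factorial at $-\sqrt q$:
\begin{equation*}
\sqrt q(\sqrt q+1)\cdots(\sqrt q+k-1)=(-1)^k(-\sqrt q)(-\sqrt q-1)\cdots(-\sqrt q-k+1)=(-1)^k k!\binom{-\sqrt q}{k}.
\end{equation*}

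The only delicate point is the sign bookkeeping in the first identity; everything else is mechanical. An equivalent and even shorter route is to invoke the exponential-formula identity $\sum_{k\ge 0}C_k(t_1,\ldots,t_k)\,z^k/k!=\exp\bigl(\sum_{i\ge 1}t_i z^i/i\bigr)$, recognize $\exp(q\log(1+z))=(1+z)^q$ in the first case and $\exp(-\sqrt q\log(1-z))=(1-z)^{-\sqrt q}$ in the second, and then read off the coefficient of $z^k$ directly from the generalized binomial theorem.
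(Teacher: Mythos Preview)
Your proof is correct. Your main argument is genuinely different from the paper's: you rewrite $C_k$ as a sum over permutations, specialize, and then invoke the generating identities for the signed and unsigned Stirling numbers of the first kind to obtain the falling and rising factorials directly. The paper instead works with the exponential generating function $\sum_{k\ge 0}C_k(t_1,\ldots,t_k)u^k/k!=\exp\bigl(\sum_{i\ge 1}t_iu^i/i\bigr)$ from the outset, recognizes the exponent as $q\ln(1+u)$ in the first case and $-\sqrt q\ln(1-u)$ in the second, and reads off the coefficient of $u^k$ from the binomial series for $(1+u)^q$ and $(1-u)^{-\sqrt q}$; this is precisely the ``shorter route'' you sketch in your final paragraph. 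Your Stirling-number approach has the advantage of being purely algebraic and avoiding any convergence or formal-power-series considerations, and it makes the cycle-counting combinatorics already present in the paper more explicit; the paper's route is quicker once the exponential identity is granted and generalizes more readily to other specializations of the $t_i$.
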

\begin{proof}
We consider the following exponential generating function \cite[proof of Lemma 4.1]{Wan2010}
\begin{equation}\label{serie generatriz}\sum_{k \geq 0} C_k(t_1,\ldots,t_k) \frac{u^k}{k!}=e^{ut_1+u^2\cdot \frac{t_2}{2}+u^3\cdot \frac{t_3}{3}+\cdots}.
\end{equation}


Thus, if $t_1=q, \, \, t_2=-q,\, \cdots, t_k=(-1)^{k-1}q$
\begin{align*}
C_k(q,-q,\ldots,(-1)^{k-1}q)&=  \sum_{\sum ic_i=k} \frac{k!}{c_1! c_2! \cdots c_k!} \Big(\frac{q}{1}\Big)^{c_1}\Big(\frac{-q}{2}\Big)^{c_2} \cdots \Big(\frac{(-1)^{k-1}q}{k}\Big)^{c_k}.
\end{align*}

On the other hand, from \eqref{serie generatriz}, we have that if $t_1=q, \, \, t_2=-q,\, \cdots, t_k=(-1)^{k-1}q$
\begin{align*}
\sum_{k \geq 0} C_k(q,\ldots,(-1)^{k-1}q) \frac{u^k}{k!}&=e^{q\cdot( u-\frac{u^2}{2}+\frac{u^3}{3}+\cdots)}\\
&=e^{q \ln(1+u)}=(1+u)^q\\
&= \sum_{k \geq 0} \frac{q!}{(q-k)!} \frac{u^k}{k!}.
\end{align*}
Thus,  the k--th term $C_k(q,-q,\ldots,(-1)^{k-1}q)$ of the serie  is $\frac{q!}{(q-k)!}$. Therefore,
 we obtained the identity \eqref{cuenta 1}.
 

In the same way, we have that
\begin{align*}
C_k(\sqrt{q},\sqrt{q},\ldots,\sqrt{q})&=  \sum_{\sum ic_i=k} \frac{k!}{c_1! c_2! \cdots c_k!} \Big(\frac{\sqrt{q}}{1}\Big)^{c_1}\Big(\frac{\sqrt{q}}{2}\Big)^{c_2} \cdots \Big(\frac{\sqrt{q}}{k}\Big)^{c_k} 
\end{align*}
Since \begin{align*}
\sum_{k \geq 0} C_k(\sqrt{q},\ldots,\sqrt{q}) \frac{u^k}{k!}&=e^{\sqrt{q}\cdot( u+\frac{u^2}{2}+\frac{u^3}{3}+\cdots)}\\
&=e^{\sqrt{q} (-\ln(1-u))}=(1-u)^{-\sqrt{q}}\\
&= \sum_{k \geq 0} (-1)^k \binom{-\sqrt{q}}{k} k! \frac{u^k}{k!},
\end{align*}
we deduce that $C_k(\sqrt{q},\ldots,\sqrt{q})=(-1)^k \binom{-\sqrt{q}}{k} k!.$
\end{proof}

\begin{theorem}\label{teo: estimación Nm(k,b) p no divide a k}
Suppose that $p$ does not divide $k$ and $p \geq 5$.  If $2 \leq m \leq \frac{k}{20}$, $k \leq 2q^{0.9}-\sqrt{q}+1$ and $q>2^{20}$, then
\[
\left |N_m(k,\boldsymbol{b})-\frac{1}{q^m}{q\choose k}\right|\leq M \cdot (-1)^k \binom{-\sqrt{q}}{k},
\]
where $M=3^4\cdot 2^{m-1}(3+d_mm)^{k+1}.$
\end{theorem}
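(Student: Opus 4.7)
The plan is to express $k!\cdot N_m(k,\bfs{b})$ as an alternating sum over the symmetric group $S_k$, invoke Theorem \ref{teo: estimacion ptos racionales V} on each summand, and then collapse the resulting sums using the two evaluations of the generating function $C_k$ from the preceding lemma. First, let $\overline{X}\subset\fq^k$ denote the set of ordered $k$--tuples with pairwise distinct coordinates solving the system; each subset counted by $N_m(k,\bfs{b})$ gives rise to exactly $k!$ such tuples, so $k!\,N_m(k,\bfs{b})=|\overline{X}|$. Möbius inversion on the partition lattice of $\{1,\ldots,k\}$, combined with the identity $\mathrm{sign}(\tau)=(-1)^{k-l(\tau)}$, yields
\[
|\overline{X}|=\sum_{\tau\in S_k}\mathrm{sign}(\tau)\,|X_\tau|.
\]

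For $\tau\in S_k$ with $r=l(\tau)$ cycles of lengths $\alpha_1,\ldots,\alpha_r$, collapsing the coordinates inside each cycle to a single variable $y_j$ identifies $X_\tau$ with the $\fq$--points of the weighted diagonal system $\alpha_1 y_1^{d_i}+\cdots+\alpha_r y_r^{d_i}=b_i$, $1\le i\le m$. The geometric results of Section 3 (complete intersection of pure dimension $r-m$, singular locus of codimension at least two, behavior at infinity) extend to this weighted system with only harmless multiplicative factors $\alpha_j\in\fq^\times$ in the Jacobian/Vandermonde computations of Lemma \ref{lemma: codimension del lugar singula de V}, so the Ghorpade--Lachaud estimate applied as in the proof of Theorem \ref{teo: estimacion ptos racionales V} gives $|X_\tau|=q^{r-m}+E_\tau$ with $|E_\tau|\le M_0\,q^{r/2}$, where $M_0=3^3\cdot 2^{m-1}(3+d_m m)^{k+1}$.

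Substituting the estimate,
\[
k!\,N_m(k,\bfs{b})=q^{-m}\sum_\tau \mathrm{sign}(\tau)\,q^{l(\tau)}+\sum_\tau\mathrm{sign}(\tau)\,E_\tau.
\]
By \eqref{cuenta 1}, $\sum_\tau\mathrm{sign}(\tau)\,q^{l(\tau)}=C_k(q,-q,\ldots,(-1)^{k-1}q)=k!\binom{q}{k}$, so the leading term is $\tfrac{k!}{q^m}\binom{q}{k}$. By \eqref{cuenta 3},
\[
\Bigl|\sum_\tau\mathrm{sign}(\tau)\,E_\tau\Bigr|\le M_0\sum_\tau q^{l(\tau)/2}=M_0\,(-1)^k\binom{-\sqrt{q}}{k}\,k!.
\]
Dividing through by $k!$ produces the claimed inequality.

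The main obstacle occurs in the estimation of $|X_\tau|$ when some cycle length $\alpha_j$ is divisible by $p$: the variable $y_j$ then vanishes from the system, lowering its effective number of unknowns and spoiling the clean uniform bound $M_0 q^{r/2}$. The hypothesis $p\nmid k$ rules out the extremal configuration in which \emph{every} cycle length is a multiple of $p$ (since $\sum_j\alpha_j=k$); it is precisely this extremal configuration that produces the distinguished correction term $\binom{q/p}{k/p}$ appearing in the companion case $p\mid k$. In our setting this correction is absent, and one must verify that the residual contribution from permutations with \emph{some} but not all cycle lengths divisible by $p$ fits within the extra factor $3$ separating $M$ from $M_0$, namely $M=3M_0=3^4\cdot 2^{m-1}(3+d_m m)^{k+1}$. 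This is the combinatorial crux, and it is here that the numerical constraints $q>2^{20}$, $p\ge 5$, $m\le k/20$ and $k\le 2q^{0.9}-\sqrt{q}+1$ are invoked to ensure these residual contributions remain of lower order than $(-1)^k\binom{-\sqrt{q}}{k}$.
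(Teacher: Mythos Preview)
Your overall strategy matches the paper's: express $|\overline X|$ as an alternating sum of $|X_\tau|$ over $S_k$ (equivalently over conjugacy classes weighted by $C(\tau)$), replace $|X_\tau|$ by $q^{l(\tau)-m}$ plus an error controlled by Section~3, and collapse the two resulting sums via the two evaluations of $C_k$. However, the sketch has two genuine gaps, only one of which you flag.

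The gap you do \emph{not} mention is the small--$l(\tau)$ regime. Theorem~\ref{teo: estimacion ptos racionales V} requires $m\le\frac{r-1}{2}$ in the number $r$ of variables, so for $\tau$ with $l(\tau)\le 2m$ (after discarding cycles of length divisible by $p$, the effective number of variables is $j-i\le 2m$) the estimate $|X_\tau|=q^{l(\tau)-m}+O(q^{l(\tau)/2})$ is simply unavailable; indeed for $l(\tau)<m$ the putative main term $q^{l(\tau)-m}$ is not even an integer. The paper handles these $\tau$ separately with a crude bound $|X_\tau|\le d_m^m q^{(j-i+1)/2}$ obtained by retaining only the first $\lfloor\frac{j-i-1}{2}\rfloor$ equations, and then shows the total contribution from this range is at most $d_m^m\,2^{k+2m}q^{(2m+1)/2}$, which is absorbed into the factor $3$ via the appendix remarks. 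Your uniform claim ``$|E_\tau|\le M_0 q^{r/2}$ for all $\tau$'' is therefore false as stated.

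The second gap---permutations with \emph{some} cycle lengths divisible by $p$---you correctly identify as the crux but do not resolve. The paper's resolution is concrete: it stratifies $CP_k^c=\bigcup_{i,j}D_k(i,j)$ by the number $i$ of $p$--divisible cycles and the total cycle count $j$, notes that for $\tau\in D_k(i,j)$ the $i$ ``dead'' variables are free so $|X_\tau|=q^i|V_\tau(\fq)|$ with $V_\tau\subset\A^{j-i}$, and then bounds the resulting residual by $q^{m+\lfloor k/p\rfloor}\,2^{k+2m+\lfloor k/p\rfloor+1}$. It is precisely the comparison of this quantity and the small--$j$ term above against $M_1\,(-1)^k\binom{-\sqrt q}{k}k!$ (Remarks~\ref{Observacion 1} and~\ref{Observacion 2}) that consumes the hypotheses $p\ge 5$, $m\le k/20$, $k\le 2q^{0.9}-\sqrt q+1$, $q>2^{20}$ and produces the factor $3$ passing from $M_1$ to $M$. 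Without carrying out this stratification and these explicit comparisons, the argument is incomplete.
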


\begin{proof}


Observe that $N_m(k,\boldsymbol{b})=\frac{|\overline{X}|}{k!}$ so, our first goal is to estimate the cardinal of $\overline{X}$.
Since $X$ is symmetric, from \cite[Proposition 2.5]{Wan2010}
we have that

\begin{equation*}\label{formula: X barra}
|\overline{X}|=\sum_{\tau\in C_k}(-1)^{k-l(\tau)}C(\tau)|X_\tau|,
\end{equation*}

where $C(\tau)=\frac{k!}{1^{c_1}c_1!\cdots k^{c_k}c_k!}.$ 

We first assume that $\boldsymbol{b}\neq 0$.
  \begin{align*}
     |\overline{X}|=\sum_{\tau\in C_k}(-1)^{k-l(\tau)}C(\tau)|X_\tau|&={\sum_{\tau\in CP_k}(-1)^{k-l(\tau)}C(\tau)|X_\tau|}+\sum_{\tau\notin CP_k}(-1)^{k-l(\tau)}C(\tau)|X_\tau|.
 \end{align*}
where $CP_k=\{\tau\in C_k:\text{the length of each cycle of $\tau$ is divisible by $p$}\}$. Then we can write  the complement of $CP_k$ in the following way:
$$CP_k^c=\displaystyle\bigcup_{i=0}^{\lfloor \frac{k}{p}\rfloor} \bigcup_{j=i+1}^kD_k(i,j),$$

where $D_k(i,j):=\{\tau \in C_k: \text{$\tau$ has $i$ cycles}\,  \text{with length  divisible by $p$ and}\, l(\tau)=j\}.$

Thus, 
\begin{align*}
|\overline{X}|&={\sum_{\tau\in CP_k}(-1)^{k-l(\tau)}C(\tau)|X_\tau|}+ \sum_{i=0}^{\lfloor \frac{k}{p}\rfloor} \, \, \sum_{\underset{\tau\in D_k(i,j)}{i+1 \leq j \leq k}} (-1)^{k-j} C(\tau)|X_\tau|.
 \end{align*}
Since $\bfs{b} \neq 0$ we deduce that    $|X_\tau|=0$ ($\forall \tau \in CP_k$). Therefore,
\begin{equation}\label{Suma a acotar}
|\overline{X}|= \sum_{i=0}^{\lfloor \frac{k}{p}\rfloor} \, \, \sum_{\underset{\tau\in D_k(i,j)}{i+1 \leq j \leq k}}  (-1)^{k-j}C(\tau)|X_\tau|.
 \end{equation}

Fix $i$ such that $ 0 \leq i \leq  \lfloor \frac{k}{p} \rfloor$. Let $\tau\in D_k(i, j)$. Thus, $X_{\tau}$ is the set of the $\fq$--rational solutions  of the variety $V_{\tau} \subset \A^{j-i}$ defined by a system of  $m$ diagonal equations with $j-i$ unknowns, which is similar to \eqref{eq: system}. We observe that:

Let $j$ such that $i+1 \leq j \leq 2m+i$. Since $m \geq \frac{j-i}{2}$, we  consider the first $\lfloor\frac{j-i-1}{2}\rfloor$ equations of the system and let $W\subset \A^{j-i}$ be the $\fq$--affine variety that these equations defined. Hence, with similar arguments of the proof of  Theorem \ref{teo: V es interseccion completa} and \eqref{eq: upper bound -- affine gral} we conclude that
\begin{equation}\label{Caso B}-d_m^m \cdot q^{\frac{j-i+1}{2}}\leq |X_\tau|\leq |W(\fq)|\leq d_m^m \cdot q^{\frac{j-i+1}{2}}.
\end{equation}

On the other hand, let $2m+i+1 \leq j \leq k$. With similar arguments as those developed in the proofs of Theorems \ref{teo: props de la clausura en el infinito} and \ref{teo: pcl V interseccion completa}  we have that $V_{\tau}$ has the geometric properties which can allow us  to apply  Theorem \ref{teo: estimacion ptos racionales V} in order to estimate $|X_\tau|$. We obtain that
\begin{equation}\label{Caso A}
\big||X_\tau|-q^{j-m}\big|\leq M_1 \cdot  q^{\frac{j}{2}},
\end{equation}
where $M_1=3^3\cdot 2^{m-1}(3+d_mm)^{k+1}.$

We first provide an upper bound for the number $|\overline{X}|$. From \eqref{Suma a acotar}, \eqref{Caso B} and \eqref{Caso A} we have that
\begin{align*} 
|\overline{X}|&= \sum_{i=0}^{\lfloor \frac{k}{p}\rfloor} \, \, \sum_{\underset{\tau\in D_k(i,j)}{i+1 \leq j \leq 2m+i}} (-1)^{k-j}C(\tau)|X_\tau|+\sum_{i=0}^{\lfloor \frac{k}{p}\rfloor} \, \, \sum_{\underset{\tau\in D_k(i,j)}{2m+i+1 \leq j \leq k}} (-1)^{k-j}C(\tau)|X_\tau|\\ \notag
& \leq \sum_{i=0}^{\lfloor \frac{k}{p}\rfloor} \, \, \sum_{\underset{\tau\in D_k(i,j)}{i+1 \leq j \leq 2m+i}} C(\tau)d_m^m q^{\frac{j-i+1}{2}}+\sum_{i=0}^{\lfloor \frac{k}{p}\rfloor} \, \, \sum_{\underset{\tau\in D_k(i,j)}{2m+i+1 \leq j \leq k}} (-1)^{k-j} C(\tau)q^{j-m}+ \\&+M_1 \cdot \sum_{i=0}^{\lfloor \frac{k}{p}\rfloor} \, \, \sum_{\underset{\tau\in D_k(i,j)}{2m+i+1 \leq j \leq k}} C(\tau) q^{j/2}\\ \notag
 &\leq \sum_{\underset{\tau\in D_k(0,j)}{1\leq j \leq 2m}} C(\tau)d_m^m q^{\frac{j+1}{2}}+ \sum_{i=0}^{\lfloor \frac{k}{p}\rfloor} \, \, \sum_{\underset{\tau\in D_k(i,j)}{i+1 \leq j \leq 2m+i}} C(\tau)d_m^m q^{\frac{j}{2}}+\\ \notag
 &+\sum_{i=0}^{\lfloor \frac{k}{p}\rfloor} \, \sum_{\underset{\tau\in D_k(i,j)}{2m+i+1 \leq j \leq k}} (-1)^{k-j} C(\tau)q^{j-m} + M_1 \cdot \sum_{i=0}^{\lfloor \frac{k}{p}\rfloor} \, \, \sum_{\underset{\tau\in D_k(i,j)}{2m+i+1 \leq j \leq k}} C(\tau) q^{j/2} \\ \notag
  &\leq \sum_{\underset{\tau\in D_k(0,j)}{1\leq j \leq 2m}} C(\tau)d_m^m q^{\frac{j+1}{2}}+ M_1 \cdot \sum_{i=0}^{\lfloor \frac{k}{p}\rfloor} \, \, \sum_{\underset{\tau\in D_k(i,j)}{i+1 \leq j \leq k}} C(\tau) q^{\frac{j}{2}}+\sum_{i=0}^{\lfloor \frac{k}{p}\rfloor} \, \sum_{\underset{\tau\in D_k(i,j)}{2m+i+1 \leq j \leq k}} (-1)^{k-j} C(\tau)q^{j-m} \\ \notag
&\leq d_m^m \sum_{0\leq j \leq 2m} \binom{j+k-1}{j} q^{\frac{j+1}{2}}+ M_1 \cdot \sum_{i=0}^{\lfloor \frac{k}{p}\rfloor} \, \, \sum_{\underset{\tau\in D_k(i,j)}{i+1 \leq j \leq k}} C(\tau) q^{\frac{j}{2}}+\\ \notag 
&+\sum_{i=0}^{\lfloor \frac{k}{p}\rfloor} \, \sum_{\underset{\tau\in D_k(i,j)}{2m+i+1 \leq j \leq k}} (-1)^{k-j} C(\tau)q^{j-m} \\ \notag 
\end{align*}
Taking into account the following identity (see, for example, \cite[Chapter 5, Section 1]{Kn94})
\begin{equation} \label{combinatorio knuth}\sum_{k=0}^n \binom{m+k}{k}=\binom{m+n+1}{n},\end{equation}
we have that 
\begin{equation} \label{combinatorio}\sum_{0\leq j \leq 2m} \binom{j+k-1}{j} = \binom{k+2m}{2m} \leq 2^{k+2m}. \end{equation}

From \eqref{combinatorio} and \eqref{cuenta 3} we obtain
\begin{align} \label{cuenta suma 1}
|\overline{X}|
& \leq d_m^m \, \,   2^{k+2m} q^{\frac{2m+1}{2}}+\sum_{i=0}^{\lfloor \frac{k}{p}\rfloor} \, \, \sum_{\underset{\tau\in D_k(i,j)}{2m+i+1 \leq j \leq k}}\! \! \! \! (-1)^{k-j} C(\tau)q^{j-m}+ M_1 \cdot (-1)^k \binom{-\sqrt{q}}{k} k! \\ \notag
&  \leq  \frac{1}{q^m}\Big(\sum_{ \underset{ l(\tau)=c_1+ \ldots+c_k}{\sum i \dot c_i=k}} (-1)^{k-l(\tau)}C(\tau) q^{l(\tau)}\!\!- \sum_{i=0}^{\lfloor \frac{k}{p}\rfloor} \, \, \sum_{\underset{\tau\in D_k(i,j)}{i+1 \leq j \leq 2m+i}} (-1)^{k-j} C(\tau)q^{j} \\ \notag
&-\sum_{j=1}^k (-1)^{k-j} p(k,j) q^j\Big)+d_m^m   2^{k+2m} \, q^{\frac{2m+1}{2}}+ M_1 \cdot (-1)^k \binom{-\sqrt{q}}{k} k!,
\end{align}
where $p(k,j)$ is the number of permutations in $S_k$ with $j$ cycles and the length of all of them is divisible by $p$.

Since $p$ does not divide $k$, from \cite[Lemma 3.1]{Nu2019}, we have that 
$$\sum_{j=1}^k (-1)^{k-j} p(k,j) q^j=0.$$
Thus, from  \eqref{cuenta suma 1}, \eqref{cuenta 1} and \eqref{combinatorio knuth} we deduce that
\begin{align*}
|\overline{X}| &\leq \frac{1}{q^m}\binom{q} {k} k! - \sum_{i=0}^{\lfloor \frac{k}{p}\rfloor} \, \, \sum_{\underset{\tau\in D_k(i,j)}{i+1 \leq j \leq 2m+i}} (-1)^{k-j} C(\tau)q^{j-m} +  M_1 \cdot (-1)^k \binom{-\sqrt{q}}{k} k!+\\ \notag
&+d_m^m \, 2^{k+2m} \, q^{\frac{2m+1}{2}} \\ \notag
&\leq \frac{1}{q^m}\binom{q} {k} k! + q^{m+ \lfloor \frac{k}{p}\rfloor} \sum_{i=0}^{\lfloor \frac{k}{p}\rfloor} \binom{k+2m+i}{2m+i}+  M_1 \cdot (-1)^k \binom{-\sqrt{q}}{k} k!+d_m^m \, 2^{k+2m} \, q^{\frac{2m+1}{2}} \\ \notag
&\leq \frac{1}{q^m}\binom{q} {k} k! + q^{m+ \lfloor \frac{k}{p}\rfloor}  \binom{k+2m+\lfloor \frac{k}{p}\rfloor+1}{2m+\lfloor \frac{k}{p}\rfloor}+  M_1 \cdot (-1)^k \binom{-\sqrt{q}}{k} k!+d_m^m \, 2^{k+2m} \, q^{\frac{2m+1}{2}} \\ \notag
&\leq \frac{1}{q^m}\binom{q} {k} k! + q^{m+ \lfloor \frac{k}{p}\rfloor}\, 2^{k+2m+\lfloor \frac{k}{p}\rfloor+1}+  M_1 \cdot (-1)^k \binom{-\sqrt{q}}{k} k!+d_m^m \, 2^{k+2m} \, q^{\frac{2m+1}{2}}
\end{align*}
From the Remarks \ref{Observacion 1}, \ref{Observacion 2}, and under the conditions over $p, q, k$ and $m$ described in the theorem, the following upper bound for $|\overline{X}|$ is deduced:
\begin{equation} \label{cota superior de X barra con b distinto de cero}
|\overline{X}| \leq \frac{1}{q^m}\binom{q} {k}\cdot k! +  M \cdot (-1)^k \binom{-\sqrt{q}}{k} k!,
\end{equation}
where $M=3^4\cdot 2^{m-1}(3+d_mm)^{k+1}.$

Now we provide a lower bound for the number $|\overline{X}|$.
From \eqref{cuenta 3}, \eqref{Suma a acotar}, \eqref{Caso B} and \eqref{Caso A} we have that
\begin{align*} 
|\overline{X}|&= \sum_{i=0}^{\lfloor \frac{k}{p}\rfloor} \, \, \sum_{\underset{\tau\in D_k(i,j)}{i+1 \leq j \leq 2m+i}} (-1)^{k-j} 
C(\tau)|X_\tau|+\sum_{i=0}^{\lfloor \frac{k}{p}\rfloor} \, \, \sum_{\underset{\tau\in D_k(i,j)}{2m+i+1 \leq j \leq k}} (-1)^{k-j} C(\tau)|X_\tau|\\ \notag
& \geq - \sum_{i=0}^{\lfloor \frac{k}{p}\rfloor} \, \, \sum_{\underset{\tau\in D_k(i,j)}{i+1 \leq j \leq 2m+i}} C(\tau)d_m^m q^{\frac{j-i+1}{2}}+\sum_{i=0}^{\lfloor \frac{k}{p}\rfloor} \, \, \sum_{\underset{\tau\in D_k(i,j)}{2m+i+1 \leq j \leq k}} (-1)^{k-j} C(\tau)q^{j-m}\\ \notag
&-M_1 \cdot \sum_{i=0}^{\lfloor \frac{k}{p}\rfloor} \, \, \sum_{\underset{\tau\in D_k(i,j)}{2m+i+1 \leq j \leq k}} C(\tau) q^{j/2} \\ \notag
&\geq  -d_m^m \sum_{0\leq j \leq 2m} \binom{j+k-1}{j} q^{\frac{j+1}{2}}- M_1 \cdot \sum_{i=0}^{\lfloor \frac{k}{p}\rfloor} \, \, \sum_{\underset{\tau\in D_k(i,j)}{i+1 \leq j \leq k}} C(\tau) q^{\frac{j}{2}}+\\ \notag 
&+ \sum_{i=0}^{\lfloor \frac{k}{p}\rfloor} \, \, \sum_{\underset{\tau\in D_k(i,j)}{2m+i+1 \leq j \leq k}} (-1)^{k-j} C(\tau)q^{j-m}. 
\end{align*}
By similar arguments to those presented above, we deduce that
\begin{align} \label{cota inferior de X barra con b distinto de cero}
|\overline{X}|& \geq \frac{1}{q^m}\binom{q} {k}\cdot k! -  M_1 \cdot (-1)^k \binom{-\sqrt{q}}{k} k!- q^{m+ \lfloor \frac{k}{p}\rfloor}\, 2^{k+2m+\lfloor \frac{k}{p}\rfloor+1}-d_m^m \, 2^{k+2m} \, q^{\frac{2m+1}{2}}\\ \notag
& \geq \frac{1}{q^m}\binom{q} {k}\cdot k! -  M \cdot (-1)^k \binom{-\sqrt{q}}{k} k!.
\end{align}

Then from 
 \eqref{cota superior de X barra con b distinto de cero} and \eqref{cota inferior de X barra con b distinto de cero} we obtain the estimate in the case $\boldsymbol{b}\neq \boldsymbol{0}$.
 
Let's now suppose that $\bfs{b}=0$. We recall that
 \begin{align*}
     |\overline{X}|=\sum_{\tau\in C_k}(-1)^{k-l(\tau)}C(\tau)|X_\tau|&={\sum_{\tau\in CP_k}(-1)^{k-l(\tau)}C(\tau)|X_\tau|}+\sum_{\tau\notin CP_k}(-1)^{k-l(\tau)}C(\tau)|X_\tau|,
 \end{align*}
 where $CP_k=\{\tau\in C_k: \text{  the length of each cycle of $\tau$ is divisible by $p$}\}$.
 
Since $\bfs{b}=0$, we have that $|X_\tau|=q^{l(\tau)}$  ($\forall \tau \in CP_k$). Thus,
 \begin{equation*}
|\overline{X}|=\sum_{\tau\in C_k}(-1)^{k-l(\tau)}C(\tau)|X_\tau|=\sum_{\tau\in CP_k}(-1)^{k-l(\tau)}C(\tau)q^{l(\tau)}+\sum_{\tau\notin CP_k}(-1)^{k-l(\tau)}C(\tau)|X_\tau|.
 \end{equation*}
We observe that
\begin{equation*}\sum_{\tau\in CP_k}(-1)^{k-l(\tau)}C(\tau)q^{l(\tau)}=\sum_{j=1}^k (-1)^{k-j} p(k,j) q^j=0.
\end{equation*}
Therefore, 
$$|\overline{X}|=\sum_{\tau\notin CP_k}(-1)^{k-l(\tau)}C(\tau)|X_\tau|.$$
By similar calculations as above, we deduce the bounds \eqref{cota superior de X barra con b distinto de cero} and \eqref{cota inferior de X barra con b distinto de cero} in this case.
Finally, from \eqref{cota superior de X barra con b distinto de cero} and \eqref{cota inferior de X barra con b distinto de cero}, we conclude the theorem's result.
 
\end{proof}

\begin{theorem}\label{teo: estimación Nm(k,b) p  divide a k}
Suppose that $p$ divides $k$ and $p \geq 3$.  If $ m \leq \frac{k}{20}$, $k \leq 2q^{0.9}-\sqrt{q}+1$ and $q \geq 2^{21}$, then
\[
\left |N_m(k,\boldsymbol{b})-\frac{1}{q^m}\Big({q\choose k} +(-1)^{k+k/p} v(\bfs{b}) \binom{q/p}{k/p}\Big)\right|\leq M \cdot (-1)^k \binom{-\sqrt{q}}{k},
\]
where $v(b)=q^m-1$  if $\bfs{b}=0$, $v(\bfs{b})=-1$ if $\bfs{b} \neq 0$ and $M=3^4\cdot 2^{m-1}(3+d_mm)^{k+1}.$
\end{theorem}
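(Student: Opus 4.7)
The plan is to adapt the proof of Theorem~\ref{teo: estimación Nm(k,b) p no divide a k}; the new ingredient is that when $p\mid k$ the contribution of permutations whose cycle lengths are all divisible by $p$ no longer vanishes, and it must be tracked explicitly. As before, I would start from $N_m(k,\bfs{b})=|\overline{X}|/k!$ and split
\[
|\overline{X}|=\sum_{\tau\in CP_k}(-1)^{k-l(\tau)}C(\tau)|X_\tau|+\sum_{\tau\notin CP_k}(-1)^{k-l(\tau)}C(\tau)|X_\tau|.
\]
The sum over $\tau\notin CP_k$ is handled exactly as in the $p\nmid k$ case: refine each class by the number $i$ of cycles with length divisible by $p$ and by $j=l(\tau)$, use \eqref{Caso B} when $j\leq 2m+i$, and apply Theorem~\ref{teo: estimacion ptos racionales V} in the form \eqref{Caso A} when $j\geq 2m+i+1$. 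The main term is again $q^{-m}\sum_{\tau\notin CP_k}(-1)^{k-l(\tau)}C(\tau)q^{l(\tau)}$.

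For the $CP_k$ sum I would exploit that, in characteristic $p$, grouping variables into blocks of sizes divisible by $p$ kills every power sum, so that $|X_\tau|=0$ for all $\tau\in CP_k$ when $\bfs{b}\neq 0$, while $|X_\tau|=q^{l(\tau)}$ when $\bfs{b}=0$. The key combinatorial input is the exponential formula \eqref{serie generatriz} specialized to $t_i=(-1)^{i-1}q$ on multiples of the odd prime $p\geq 3$ and $t_i=0$ otherwise:
\[
\sum_{k\geq 0}\frac{u^k}{k!}\sum_{\tau\in CP_k}(-1)^{k-l(\tau)}C(\tau)q^{l(\tau)}=\exp\!\Bigl(\frac{q}{p}\sum_{j\geq 1}\frac{(-1)^{j-1}(u^p)^j}{j}\Bigr)=(1+u^p)^{q/p},
\]
which yields $\sum_{\tau\in CP_k}(-1)^{k-l(\tau)}C(\tau)q^{l(\tau)}=k!\binom{q/p}{k/p}$ for $p\mid k$. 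Combined with \eqref{cuenta 1} this gives $\sum_{\tau\notin CP_k}(-1)^{k-l(\tau)}C(\tau)q^{l(\tau)}=k!\bigl(\binom{q}{k}-\binom{q/p}{k/p}\bigr)$. Adding the explicit $CP_k$-contribution, which is $k!\binom{q/p}{k/p}$ when $\bfs{b}=0$ and $0$ when $\bfs{b}\neq 0$, produces the announced main term $q^{-m}\binom{q}{k}+q^{-m}v(\bfs{b})\binom{q/p}{k/p}$. Since $p\geq 3$ is odd and $p\mid k$, one has $k+k/p=(k/p)(p+1)\equiv 0\pmod 2$, so $(-1)^{k+k/p}=+1$ and the sign in the statement matches the positive coefficient just obtained.

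The main obstacle is the error analysis: one must show that, under the slightly strengthened hypothesis $q\geq 2^{21}$, every auxiliary remainder is absorbed into $M\cdot(-1)^k\binom{-\sqrt q}{k}$ with $M=3^4\cdot 2^{m-1}(3+d_mm)^{k+1}$. The three pieces to control are the low-dimensional bound $d_m^m\,2^{k+2m}q^{(2m+1)/2}$ coming from the range $j\leq 2m+i$, the geometric remainder from Theorem~\ref{teo: estimacion ptos racionales V} which is bounded via \eqref{cuenta 3} and $\binom{a}{b}\leq 2^a$ as in the previous proof, and the genuinely new term $q^{m+\lfloor k/p\rfloor}\cdot 2^{k+2m+\lfloor k/p\rfloor+1}$ arising because the $CP_k$-evaluation is no longer null. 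The strengthened lower bound on $q$ together with Remarks \ref{Observacion 1} and \ref{Observacion 2} is precisely what is needed to dominate this last term by the target error. Mirroring the upper/lower bound manipulations of \eqref{cota superior de X barra con b distinto de cero}--\eqref{cota inferior de X barra con b distinto de cero}, carried out separately for $\bfs{b}=0$ and $\bfs{b}\neq 0$, then closes the estimate.
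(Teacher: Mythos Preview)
Your proposal is correct and follows essentially the same route as the paper: split off the $CP_k$ part, handle $CP_k^c$ exactly as in Theorem~\ref{teo: estimación Nm(k,b) p no divide a k}, and identify the extra main term via $\sum_{\tau\in CP_k}(-1)^{k-l(\tau)}C(\tau)q^{l(\tau)}=(-1)^{k+k/p}\binom{q/p}{k/p}k!$ (your direct derivation from the exponential formula is a self-contained alternative to the paper's citation of \cite[Lemma~3.1]{Wan2010}, and your parity check $(-1)^{k+k/p}=1$ is a welcome clarification). One small correction: the term $q^{m+\lfloor k/p\rfloor}\cdot 2^{k+2m+\lfloor k/p\rfloor+1}$ is not produced by the $CP_k$ evaluation---it already appears verbatim in the $p\nmid k$ proof and comes from the low-$j$ truncation $\sum_{i}\sum_{i+1\le j\le 2m+i}(-1)^{k-j}C(\tau)q^{j-m}$ when rewriting the main term; the $CP_k$ contribution is exact (either $0$ or $k!\binom{q/p}{k/p}$) and carries no error of its own.
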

\begin{proof}

We first consider the case $\bfs{b} \neq 0$. Following the ideas of the proof of the theorem above, we have that
\begin{align*}
|\overline{X}|=\sum_{\tau\in C_k}(-1)^{k-l(\tau)}C(\tau)|X_\tau|&={\sum_{\tau\in CP_k}(-1)^{k-l(\tau)}C(\tau)|X_\tau|}+ \sum_{i=0}^{ \frac{k}{p}} \, \, \sum_{\underset{\tau\in D_k(i,j)}{i+1 \leq j \leq k}} (-1)^{k-j} C(\tau)|X_\tau|,
 \end{align*}
 where $CP_k=\{\tau\in C_k: \text{  the length of each cycle of $\tau$ is divisible by $p$}\}$ and $D_k(i,j):=\{\tau \in C_k: \text{$\tau$ has $i$ cycles}\,  \text{with length  divisible by $p$ and}\, l(\tau)=j\}.$

 Since $\bfs{b} \neq 0$ we deduce that    $|X_\tau|=0$ ($\forall \tau \in CP_k$). Therefore,
\begin{equation*}
|\overline{X}|= \sum_{i=0}^{ \frac{k}{p}} \, \, \sum_{\underset{\tau\in D_k(i,j)}{i+1 \leq j \leq k}} (-1)^{k-j} C(\tau)|X_\tau|.
 \end{equation*}
 Following the same steps as the proof of the theorem above, we have that
\begin{align*}|\overline{X}|& \leq \frac{1}{q^m}\Big(\sum_{ \underset{ l(\tau)=c_1+ \ldots+c_k}{\sum i \dot c_i=k}} (-1)^{k-l(\tau)}C(\tau) q^{l(\tau)}\!\!- \sum_{i=0}^{\frac{k}{p}} \, \, \sum_{\underset{\tau\in D_k(i,j)}{i+1 \leq j \leq 2m+i}} (-1)^{k-j} C(\tau)q^{j}\\ \notag
& -\sum_{j=1}^k (-1)^{k-j} p(k,j) q^j\Big)+d_m^m   2^{k+2m} \, q^{\frac{2m+1}{2}}+ M_1 \cdot (-1)^k \binom{-\sqrt{q}}{k} k!.
\end{align*}
Since $p$  divides $k$, from \cite[Lemma 3.1]{Wan2010}
$$\sum_{j=1}^k (-1)^{k-j} p(k,j) q^j=(-1)^{k+k/p} \binom{q/p}{k/p} k!.$$
Then, from the equality above and \eqref{cuenta 1}  we obtain
\begin{equation}\begin{split} \label{cota superior p divide a k b distinto de cero}
|\overline{X}|&\leq \frac{1}{q^m}\Big(\binom{q}{k} k!-(-1)^{k+k/p} \binom{q/p}{k/p} k!\Big)+q^{m+  \frac{k}{p}}\, 2^{k+2m+\frac{k}{p}+1} +d_m^m   2^{k+2m} \, q^{\frac{2m+1}{2}}+\\
&+ M_1 \cdot (-1)^k \binom{-\sqrt{q}}{k} k!.
\end{split}
\end{equation}
By similar reasoning to the proof of the theorem above, we deduce the following lower bound for $|\overline{X}|$.
\begin{equation}\begin{split} \label{cota inferior p divide a k b distinto de cero}
|\overline{X}|&\geq \frac{1}{q^m}\Big(\binom{q}{k} k!-(-1)^{k+k/p} \binom{q/p}{k/p} k!\Big)-q^{m+  \frac{k}{p}}\, 2^{k+2m+\frac{k}{p}+1}- d_m^m   2^{k+2m} \, q^{\frac{2m+1}{2}}\\
&- M_1 \cdot (-1)^k \binom{-\sqrt{q}}{k} k!.
\end{split}
\end{equation}
Now, we suppose that $\bfs{b}=0$. 
We recall that
 \begin{align*}
     |\overline{X}|=\sum_{\tau\in C_k}(-1)^{k-l(\tau)}C(\tau)|X_\tau|&={\sum_{\tau\in CP_k}(-1)^{k-l(\tau)}C(\tau)|X_\tau|}+\sum_{i=0}^{ \frac{k}{p}} \, \, \sum_{\underset{\tau\in D_k(i,j)}{i+1 \leq j \leq k}} (-1)^{k-j} C(\tau)|X_\tau|.
 \end{align*}
Since $\bfs{b}=0$, $|X_\tau|=q^{l(\tau)}$  ($\forall \tau \in CP_k$). Thus,
 \begin{equation*}
|\overline{X}|=\sum_{\tau\in C_k}(-1)^{k-l(\tau)}C(\tau)|X_\tau|=\sum_{\tau\in CP_k}(-1)^{k-l(\tau)}C(\tau)q^{l(\tau)}+\sum_{i=0}^{ \frac{k}{p}} \, \, \sum_{\underset{\tau\in D_k(i,j)}{i+1 \leq j \leq k}} (-1)^{k-j}C(\tau)|X_\tau|.
 \end{equation*}
Since $p$ divides $k$, then
\begin{align*}\sum_{\tau\in CP_k}(-1)^{k-l(\tau)}C(\tau)q^{l(\tau)}
&=\sum_{j=1}^k (-1)^{k-j} p(k,j) q^j=(-1)^{k+k/p} \binom{ q/p}{k/p} k!.
\end{align*}
Then, we obtain
 \begin{equation}\label{cuenta para la suma con p divide a k}
|\overline{X}|=\sum_{\tau\in C_k}(-1)^{k-l(\tau)}C(\tau)|X_\tau|=(-1)^{k+k/p} \binom{ q/p}{k/p} k!+\sum_{\tau\notin CP_k}(-1)^{k-l(\tau)}C(\tau)|X_\tau|.
 \end{equation}
 The second term of the right hand of  \eqref{cuenta para la suma con p divide a k} is bounded in the same way as in the Theorem \ref{teo: estimación Nm(k,b) p no divide a k}.
 Finally, from \eqref{cota superior p divide a k b distinto de cero}, \eqref{cota inferior p divide a k b distinto de cero}, \eqref{cuenta para la suma con p divide a k}
 and, taking into account  Remarks  \ref{remark 7.4} and \ref{Observacion 2}, 
 we deduce the statement of the theorem.

\begin{remark}
    
Let's note that under our hypothesis, our results are an improvement with regard to \cite{Wan2010}. Indeed, our error is of the order $\mathcal{O}(p^{\frac{sk}{2}})$ while in \cite{Wan2010} is of the order $\mathcal{O}(p^{(s-1)k})$. Also, in the case that $p$ divides $k$, we have an aditional term in the theorical estimate.

Moreover we can apply our results to a more general version of the problem, as we are not constrained to consecutive exponents.
\end{remark}

\begin{remark}
Let $p \geq 3$, $m \leq \frac{k}{30}+ \frac{2}{3}$ and $k \leq 2 q^{0.9}-\sqrt{q}+1$. With similar arguments to the proofs of Theorems \ref{teo: estimación Nm(k,b) p no divide a k} and \ref{teo: estimación Nm(k,b) p  divide a k}, and using  \eqref{systems homogeneous}, we deduce that

\begin{enumerate}
\item If $p$ does not divide $k$, then
\[
\left |N_m(k,\boldsymbol{0})-\frac{1}{q^m}{q\choose k}\right|\leq (\frac{7}{2}m^2)^{k+1} \cdot (-1)^k \frac{1}{q^{\frac{m-1}{2}}}\binom{-\sqrt{q}}{k}.
\]
\item If $p$ divides $k$, then
\[
\left |N_m(k,\boldsymbol{0})-\frac{1}{q^m}\Big({q\choose k} +(-1)^{k+k/p} q^{m-1} \binom{q/p}{k/p}\Big)\right|\leq (\frac{7}{2}m^2)^{k+1} \cdot (-1)^k \frac{1}{q^{\frac{m-1}{2}}}\binom{-\sqrt{q}}{k}.
\]
\end{enumerate}

\end{remark}

\end{proof}
\subsection{$m$--th moment $k$-SSP for large $k$ and medium $m$}
 Now we aim to analyse when $N_m(k,\bfs{b})>0$ occurs. We first consider the cases for large values of $k$.



By analyzing our bound we can state the following complementing result.

\begin{theorem} \label{teo: existencia con estimación} 
Let $p \geq 3$ such that $p$ does not divide $k$ or $p$ divides $k$  and $\bfs{b}=\bfs{0}.$

If $ m \leq d_m \leq \frac{\sqrt{3}}{3} k^{0.02}$, $k \leq 2q^{0.9}-\sqrt{q}+1$ and $q>2^{20}$,
then $N_m(k,\bfs{b})>0$ for all $\bfs{b}\in\fq^m$.

\end{theorem}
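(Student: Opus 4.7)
The plan is to derive the existence result as a direct consequence of the two estimates in Theorems \ref{teo: estimación Nm(k,b) p no divide a k} and \ref{teo: estimación Nm(k,b) p divide a k}. In either admissible case ($p\nmid k$, or $p\mid k$ with $\bfs{b}=\bfs{0}$), those theorems give $N_m(k,\bfs{b})\geq A-E$, where $A$ is the explicit ``main term'' and $E=M\cdot(-1)^k\binom{-\sqrt{q}}{k}$ with $M=3^4\cdot 2^{m-1}(3+d_m m)^{k+1}$. It thus suffices to show $A>E$.

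First I would dispose of the sign issue in the case $p\mid k$, $\bfs{b}=\bfs{0}$. Writing $k=pt$ and using that $p\geq 3$ is odd, the exponent $k+k/p=t(p+1)$ is even, so $(-1)^{k+k/p}=1$. Combined with $v(\bfs{0})=q^m-1\geq 0$, the correction $\frac{1}{q^m}(-1)^{k+k/p}v(\bfs{b})\binom{q/p}{k/p}$ is non-negative, so in both admissible cases $A\geq \frac{1}{q^m}\binom{q}{k}$. Using the identity $(-1)^k\binom{-\sqrt{q}}{k}=\binom{\sqrt{q}+k-1}{k}$, it is enough to prove
\[
\binom{q}{k} > q^m\cdot M\cdot \binom{\sqrt{q}+k-1}{k}.
\]

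Next I would rewrite the ratio between the two binomials as
\[
\frac{\binom{q}{k}}{\binom{\sqrt{q}+k-1}{k}}=\prod_{i=0}^{k-1}\frac{q-i}{\sqrt{q}+i},
\]
where each factor is decreasing in $i$. Since $k-1\leq 2q^{0.9}-\sqrt{q}$, each factor is bounded below by $\frac{q-2q^{0.9}+\sqrt{q}}{2q^{0.9}}=\frac{q^{0.1}}{2}-1+\frac{1}{2q^{0.4}}$; the hypothesis $q>2^{20}$ gives $q^{0.1}\geq 4$, so this lower bound is at least $q^{0.1}/4$. Hence the ratio is at least $(q^{0.1}/4)^k$.

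Finally, $m\leq d_m\leq k^{0.02}/\sqrt{3}$ yields $d_m m\leq d_m^2\leq k^{0.04}/3$, and the degree hypothesis itself forces $k$ large enough that $3+d_m m\leq k^{0.04}$. Plugging these into $M$ reduces the goal to
\[
\Bigl(\frac{q^{0.1}}{4}\Bigr)^{\!k} > 81\cdot 2^{m-1}\cdot q^m\cdot k^{0.04(k+1)}.
\]
Taking $\log$ and comparing coefficients of $\log q$ (the left side contributes $0.1\,k\log q$, while $m\log q$ is negligible since $m\leq k^{0.02}$), and using $k\leq 2q^{0.9}$ to dominate $0.04\,k\log k$ by $0.1\,k\log q$, the inequality follows under the stated hypotheses. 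The main obstacle is bookkeeping: the exponents $0.02$, $0.04$, $0.1$, $0.9$ are chosen tightly, and one must verify that the constants coming from $81\cdot 2^{m-1}$ and from the gap between $q^{0.1}/2-1$ and $q^{0.1}/4$ really are absorbed for every admissible $q,k,m$.
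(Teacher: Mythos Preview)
Your approach is essentially the same as the paper's: apply the estimation theorems, bound the ratio $\binom{q}{k}\big/(-1)^k\binom{-\sqrt{q}}{k}$ from below using $k-1\le 2q^{0.9}-\sqrt{q}$, and then check that the resulting inequality holds under the exponent conditions. A few minor differences are worth noting. The paper pairs the numerator and denominator as $\prod_i \frac{q-i}{\sqrt{q}+k-1-i}$, which is increasing in $i$ when $k\le q-\sqrt{q}+1$, giving the cleaner lower bound $\bigl(q/(\sqrt{q}+k-1)\bigr)^k\ge (q^{0.1}/2)^k$; your pairing $\prod_i\frac{q-i}{\sqrt{q}+i}$ yields the slightly weaker $(q^{0.1}/4)^k$, which still suffices. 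The paper also absorbs the prefactors by first observing $3^4\cdot 2^{m-1}(3+d_m m)^{k+1}\le (3d_m m)^{k+1}$ and then using $q^{0.1k}/2^k\ge q^{k/20}$ (from $q>2^{20}$), reducing everything to $q^{k/20-m}>(3md_m)^{k+1}$; your route via $3+d_m m\le k^{0.04}$ is equivalent once one notes that $d_m\ge 2$ together with $d_m\le \tfrac{1}{\sqrt{3}}k^{0.02}$ already forces $k$ to be enormous. Your explicit treatment of the sign of the correction term when $p\mid k$ and $\bfs{b}=\bfs{0}$ (via $(-1)^{k+k/p}=1$ for odd $p$) is a point the paper's proof leaves implicit, so that step is a useful addition.
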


\begin{proof}

We observe that, if $m \leq k-5$ and $k \geq 7$, we obtain
$$3^4\cdot 2^{m-1}(3+d_mm)^{k+1} \leq (3 d_m m)^{k+1}.$$
Thus, we have that
\begin{align*}
    N_m(k,\bfs{b})>0\Leftrightarrow {q\choose k}\frac{1}{q^m}-\left(3md_m\right)^{k+1}(-1)^k{-\sqrt{q}\choose k}>0
\end{align*}
From Remark \ref{Observacion 1} and taking into account that $k \leq 2q^{0.9}-\sqrt{q}+1$, $q>2^{20}$ and $2 \leq m \leq \frac{k}{20}$
we have that
 $\frac{{q\choose k}}{(-1)^k{-\sqrt{q}\choose k}}\geq \frac{q^{0.1 k}}{2^k} \geq q^{\frac{1}
 {20}k}$. Then,  we only need to prove  that $q^{\frac{1}{20}k-m}-\left(3md_m\right)^{k+1}>0$, which is obtained straightforward from the conditions over 
 $k$, $q$ and $m$.

\end{proof}

\begin{theorem} \label{teo: existencia con estimación 2} 

Let $p \geq 3$ such that $p$ divides $k$ and $\bfs{b} \neq 0$.
If $ m \leq d_m \leq \frac{\sqrt{3}}{3} k^{0.02}$, $k \leq 2q^{0.9}-\sqrt{q}+1$ and $q \geq 2^{21}$,
then $N_m(k,\bfs{b})>0$ for all $\bfs{b}\in\fq^m$.
\end{theorem}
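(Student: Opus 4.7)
The strategy is to adapt the argument of Theorem~\ref{teo: existencia con estimación} so as to absorb the extra correction term that appears when $p\mid k$ and $\bfs{b}\neq\bfs{0}$. By Theorem~\ref{teo: estimación Nm(k,b) p  divide a k} applied with $v(\bfs{b})=-1$, one obtains the lower bound
\[
N_m(k,\bfs{b})\ \geq\ \frac{1}{q^m}\binom{q}{k}\ -\ \frac{1}{q^m}(-1)^{k+k/p}\binom{q/p}{k/p}\ -\ M\,(-1)^k\binom{-\sqrt{q}}{k},
\]
with $M=3^4\cdot 2^{m-1}(3+d_m m)^{k+1}$. Positivity of $N_m(k,\bfs{b})$ is then reduced to establishing
\[
\frac{1}{q^m}\binom{q}{k}\ >\ \frac{1}{q^m}\binom{q/p}{k/p}\ +\ M\,(-1)^k\binom{-\sqrt{q}}{k},
\]
and it suffices to treat the worst case $(-1)^{k+k/p}=+1$, since the opposite sign only strengthens the lower bound.

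The first step I would carry out is to show that $\binom{q/p}{k/p}$ is negligible compared to $\binom{q}{k}$. Combining the elementary bounds $\binom{q/p}{k/p}\leq (q/p)^{k/p}/(k/p)!$ and $\binom{q}{k}\geq (q-k+1)^k/k!$ with $p\geq 3$, $k\leq 2q^{0.9}-\sqrt{q}+1$ and $q\geq 2^{21}$, the ratio $\binom{q/p}{k/p}/\binom{q}{k}$ can be bounded above by an expression of the form $C^{k}(k/q)^{k(p-1)/p}$, which under our hypotheses is far smaller than $1/2$. Hence $\binom{q/p}{k/p}\leq \tfrac{1}{2}\binom{q}{k}$, and the inequality we need reduces to
\[
\frac{1}{2q^m}\binom{q}{k}\ >\ M\,(-1)^k\binom{-\sqrt{q}}{k},
\]
which, up to the harmless factor $2$, is precisely the inequality verified in the proof of Theorem~\ref{teo: existencia con estimación}.

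From here I would mimic that proof: under the assumptions $m\leq k-5$, $k\geq 7$ one has $M\leq (3\,m\,d_m)^{k+1}$, and by Remark~\ref{Observacion 1} (together with $k\leq 2q^{0.9}-\sqrt{q}+1$ and $q\geq 2^{21}$) the ratio $\binom{q}{k}/\bigl((-1)^k\binom{-\sqrt{q}}{k}\bigr)$ is at least $q^{k/20}$. The proof then boils down to the elementary inequality $q^{k/20-m}>4(3\,m\,d_m)^{k+1}$, which follows directly from $m\leq d_m\leq \tfrac{\sqrt{3}}{3}k^{0.02}$, $k\leq 2q^{0.9}-\sqrt{q}+1$ and $q\geq 2^{21}$.

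The main obstacle I anticipate is the uniform control of the correction term $(-1)^{k+k/p}\binom{q/p}{k/p}/q^m$: its sign can either help or hurt, and one has to ensure that even in the unfavorable case it is dominated by $\binom{q}{k}/q^m$ throughout the full parameter range. Once this ingredient is in place, the remainder of the proof is essentially a direct transcription of the argument of Theorem~\ref{teo: existencia con estimación}.
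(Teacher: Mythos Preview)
Your plan is correct and follows essentially the same route as the paper: absorb the correction term $\binom{q/p}{k/p}$ into a constant fraction of $\binom{q}{k}$, then reduce to the inequality already handled in Theorem~\ref{teo: existencia con estimación}. The only notable difference is in how you control $\binom{q/p}{k/p}/\binom{q}{k}$: the paper invokes Remark~\ref{observacion para el caso p divide a k} (which gives $\binom{q/p}{k/p}\le\binom{q}{k}(k/q)^{2p-2}$) and treats the cases $k>p$ and $k=p$ separately, whereas you propose the cruder pair of bounds $\binom{q/p}{k/p}\le (q/p)^{k/p}/(k/p)!$ and $\binom{q}{k}\ge (q-k+1)^k/k!$. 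Your route is perfectly valid, but be aware that the resulting inequality $2^{5k/3}/q^{k/15}\le 1/2$ (coming from $p=3$) just barely fails at $q=2^{21}$; you will need to squeeze the constants a bit more carefully (for instance by keeping the factor $p^{k/p}$ in the denominator, or by using the sharper ratio from Remark~\ref{observacion para el caso p divide a k}) to match the stated threshold exactly.
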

\begin{proof}
We have that
\begin{align*}
    N_m(k,\bfs{b})>0\Leftrightarrow \Big({q\choose k}- \binom{q/p}{k/p}\Big)\frac{1}{q^m}-\left(3 md_m\right)^{k+1}(-1)^k{-\sqrt{q}\choose k}>0.
\end{align*}
From Remark \ref{Observacion 1} and taking into account that  $k \leq 2q^{0.9}-\sqrt{q}+1$ and $q \geq 2^{21}$ we have that 
 $\frac{{q\choose k}}{(-1)^k{-\sqrt{q}\choose k}}\geq \frac{q^{0.1 k}}{2^k}$.
We first consider the case $k>p$. From Remark \ref{observacion para el caso p divide a k} we obtain that $\binom{q/p}{k/p}\leq \binom{q}{k}\frac{1}{q^{(p-1)/10}}$.
     Then, taking into account that $p \geq 3$, $ k\leq 2q^{0.9}$, $q \geq 2^{21}$ we deduce that $${q\choose k}- \binom{q/p}{k/p}\geq \binom{q}{k}\Big(1-\frac{1}{q^{(p-1)/10}}\Big)\geq \frac{q^{0.1 k}}{2^{k+1}}(-1)^k{-\sqrt{q}\choose k}\geq q^{\frac{11}{210}k-\frac{1}{21}}(-1)^k{-\sqrt{q}\choose k}.$$
 
Then,  we only need to check that $q^{\frac{11}{210}k-\frac{1}{21}-m}-\left(3 md_m\right)^{k+1}>0$.
But this inequality holds from the conditions over $k$, $q$ and $m$ which  we have assumed, the last inequality holds.
Finally with similar arguments above, we can prove  the case $k=p$ using that $\binom{q}{k}-\frac{q}{k} \geq \binom{q}{k}(1-\frac{2}{q^{0.1}}) \geq \binom{q}{k} \frac{1}{2}.$

\end{proof}
In \cite{Wan2010} the authors present the following result:

\begin{theorem} \label{existencia Wan}
For any $\varepsilon>0$, there is a constant $c_{\varepsilon}>0$ such that if $m<\varepsilon \sqrt{k}$ and $4\varepsilon^2\ln^2{q}<k<c_{\varepsilon
}q$, then $N_m(k,\bfs{b})>0$ for all $\bfs{b}\in \fq^m$.
\end{theorem}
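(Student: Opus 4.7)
The plan is to combine the Li--Wan estimate \cite[Theorem 1.3]{Wan2010}, recalled at the beginning of this paper, with elementary bounds on binomial coefficients. Since
\[
\Big|N_m(k,\bfs{b}) - \tfrac{1}{q^m}\binom{q}{k}\Big| \leq \binom{q/p + (m-1)\sqrt{q} + k-1}{k},
\]
positivity of $N_m(k,\bfs{b})$ reduces to establishing
\[
\tfrac{1}{q^m}\binom{q}{k} > \binom{q/p + (m-1)\sqrt{q} + k-1}{k}. \qquad (\star)
\]
The first step is to replace each binomial coefficient in $(\star)$ by a tractable expression. Using $\binom{q}{k}\ge (q-k+1)^k/k!$ on the left and $\binom{n}{k}\le n^k/k!$ on the right, $(\star)$ is implied by $(q-k+1)^k q^{-m} > (q/p + m\sqrt{q} + k)^k$; taking $k$-th roots and dividing by $q$, this reduces to
\[
q^{-m/k}\bigl(1-(k-1)/q\bigr) > \tfrac{1}{p} + \tfrac{m}{\sqrt{q}} + \tfrac{k}{q}. \qquad (\star\star)
\]

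Next, I would control each quantity in $(\star\star)$ by means of the hypotheses. The assumption $4\varepsilon^2\ln^2 q < k$ gives $\sqrt{k} > 2\varepsilon\ln q$; combined with $m < \varepsilon\sqrt{k}$ this yields $(m/k)\ln q < \varepsilon \ln q/\sqrt{k} < 1/2$, so $q^{-m/k} > e^{-1/2} > 0.6$. The assumption $k < c_\varepsilon q$ makes $k/q \le c_\varepsilon$ arbitrarily small, and together with $m < \varepsilon\sqrt{k}$ forces $m/\sqrt{q} < \varepsilon\sqrt{c_\varepsilon}$. Consequently, the left-hand side of $(\star\star)$ is at least $e^{-1/2}(1-c_\varepsilon)$ while the right-hand side is at most $1/p + \varepsilon\sqrt{c_\varepsilon} + c_\varepsilon$.

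The main obstacle will be the characteristic-two case, where the term $1/p = 1/2$ on the right is close to the unconditional lower bound $e^{-1/2} \approx 0.607$ on the left, leaving only a small absolute margin to absorb the terms $\varepsilon\sqrt{c_\varepsilon}+c_\varepsilon$. To close this gap one must choose $c_\varepsilon>0$ as an explicit decreasing function of $\varepsilon$, for instance $c_\varepsilon := \min\{1,\,(e^{-1/2}-1/2)^2/(100\,\varepsilon^2)\}$, so that $e^{-1/2}(1-c_\varepsilon) > 1/2 + \varepsilon\sqrt{c_\varepsilon} + c_\varepsilon$ holds uniformly in $q,k,m$ within the prescribed range. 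Once such a $c_\varepsilon$ is fixed, $(\star\star)$, hence $(\star)$, hence $N_m(k,\bfs{b})>0$ follow for every $\bfs{b}\in\fq^m$; all the remaining work is routine binomial manipulation.
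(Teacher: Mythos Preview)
This theorem is not proved in the present paper at all: it is quoted from Li--Wan \cite{Wan2010} (their Theorem~1.4), first in the introduction and again as Theorem~\ref{existencia Wan}, solely so that the authors can compare it with their own Theorems~\ref{teo: existencia con estimación}, \ref{teo: existencia con estimación 2} and \ref{teo: existencia con Brun}. There is therefore no ``paper's own proof'' to compare against; your proposal is a reconstruction of the original Li--Wan argument, and the strategy---feed the error bound from \cite[Theorem~1.3]{Wan2010} into $(\star)$ and choose $c_\varepsilon$ small---is the natural one and is essentially how the result is obtained.

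One concrete slip: your suggested constant $c_\varepsilon=\min\{1,(e^{-1/2}-1/2)^2/(100\varepsilon^2)\}$ does not work. For small $\varepsilon$ the minimum is $1$, and then the left side of $(\star\star)$ collapses to $e^{-1/2}(1-1)=0$, which is certainly not larger than $1/p+\varepsilon\sqrt{c_\varepsilon}+c_\varepsilon$. The point is that you must keep $c_\varepsilon$ bounded by an \emph{absolute} small constant as well, so that the term $(e^{-1/2}+1)c_\varepsilon$ stays below the margin $e^{-1/2}-1/2\approx 0.1065$. Something like $c_\varepsilon=\min\{10^{-2},\,(e^{-1/2}-1/2)^2/(100\varepsilon^2)\}$ does the job: then $\varepsilon\sqrt{c_\varepsilon}\le (e^{-1/2}-1/2)/10$ and $(e^{-1/2}+1)c_\varepsilon\le 1.61\cdot 10^{-2}$, and $(\star\star)$ follows. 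With this correction your outline is complete.
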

\begin{remark} Our results complements Theorem \ref{existencia Wan}. 
Indeed, from the hypothesis in \ref{existencia Wan}, is clear that $\varepsilon > \frac{m}{\sqrt{k}}$ and $4\varepsilon^2\ln^2q < k$. Consequently, $q<e^{\frac{k}{2m}}.$
Thus if $k$ and $m$ are fixed, there exists an upper bound on $q$. However, with our results given a specific $k$ and $m$, one can choose an arbitrarily large value for $q$.

On the other hand one can deduce from the hypothesis that $2m \ln{q}<k$
also indicates that, for a fixed $q$, $k$ must be sufficiently large. In our theorems, $q$ does not impose a lower bound on $k$.

Finally, we can apply our results to a more general version of the problem, as we are not constrained to consecutive exponents.

\end{remark}


\subsection{$m$--th moment $k$-SSP for medium $k$ and large $m$}
In this section we present an existence's result based on the Brun sieve in order to obtain a low bound of the number of $\fq$-rational solutions of the system with at least two equals coordinates.
The Brun sieve tell us that
$$N_m(k,\bfs{b})\geq |V(\fq)|-\binom{k}{2}|V_{1,2}(\fq)|,$$
where $V_{1,2}= V\cap \{X_1=X_2\}$.
According to Theorem \ref{teo: estimacion ptos racionales V}, if $m\leq \frac{k-2}{2}$ we have that
$$\left ||V(\mathbb{F}_q)|-q^{k-m}\right|\leq 3^3\cdot 2^{m-2}(3+d_mm)^{k+1}\cdot q^{\frac{k}{2}},$$
and $$\left ||V_{1,2}(\mathbb{F}_q)|-q^{k-m-1}\right|\leq 3^3\cdot 2^{m-2}(3+d_mm)^{k}\cdot q^{\frac{k-1}{2}}.$$
On the other hand, it is easy to show that if $m\leq k-3$  then  $3^3\cdot 2^{m-2}(3+d_mm)^{k+1}\leq  (3d_mm)^{k+1}$ and $3^3\cdot 2^{m-2}(3+d_mm)^{k}\leq  (3d_mm)^{k}$.
Then for $m\leq \frac{k-2}{2}$  we obtain that
\begin{align*}
    N_m(k,\bfs{b})\geq & q^{k-m}- \Big(3d_mm\Big)^{k+1}q^{\frac{k}{2}}-\frac{k^2}{2}\Big(q^{k-m-1}+\Big(3d_mm\Big)^{k}q^{\frac{k-1}{2}}\Big)\\
\end{align*}
Therefore, $N_m(k,\bfs{b})>0$ if  \begin{equation}\label{ec: desigualdad 1 Brun}
    q^{\frac{k-1}{2}-m}\Big(q-\frac{k^2}{2}\Big)>\Big(3d_mm\Big)^{k}\Big(3d_mm q^{\frac{1}{2}}+\frac{k^2}{2}\Big).
\end{equation}
We observe that $\frac{k-25}{50}\leq \frac{\sqrt{3}}{3} k$ for all $k \geq 1$. Therefore, if $  m \leq d_m \leq \frac{k-25}{50}$  and $k \leq q^{0.24}$ then $3m d_m \leq k^2 \leq q^{0.48}.$ Thus, we have that if $q\geq 106,$
$$3d_m m q^{1/2} +k^ 2<q.$$
Thus, in order to prove that \eqref{ec: desigualdad 1 Brun} holds, it is sufficient to ask that 
\begin{equation}\label{ec: Brun desigualdad 2}
  q^{\frac{k-1}{2}-m}>\Big(3d_mm\Big)^{k}.
\end{equation}
Observe that, if $ m \leq d_m \leq\frac{k-25}{50}$ and $k \leq q^{0.24}$  then  \eqref{ec: Brun desigualdad 2} holds.

We have proved the following result.
\begin{theorem} \label{teo: existencia con Brun}
    Let $ m\leq d_m \leq \frac{k-25}{50}$ and $ k\leq q^{0,24}$
    then $N_m(k,\bfs{b})>0$ for all $\bfs{b}\in \fq^m$.
\end{theorem}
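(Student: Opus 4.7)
The plan is to use the Brun sieve together with the rational point estimate from Theorem \ref{teo: estimacion ptos racionales V}. Write $V\subset\A^k$ for the affine variety of the system \eqref{eq: system} with exponents $(d_1,\ldots,d_m)$ where we may take $d_1=1,\ldots,d_m=m$ (or any strictly increasing sequence), and let $V_{1,2}:=V\cap\{X_1=X_2\}$. The Brun inclusion–exclusion lower bound gives
\[
N_m(k,\bfs{b})\;\geq\;|V(\fq)|-\binom{k}{2}\,|V_{1,2}(\fq)|,
\]
since every ordered tuple contributing to $V(\fq)$ either has all distinct coordinates (hence yields $k!\cdot N_m(k,\bfs{b})/k!$-type count up to permutation, which is positive when the sieve term is bounded) or lies in $V_{i,j}$ for some $i<j$, and by the $S_k$-symmetry of the system all $V_{i,j}$ have the same number of $\fq$-points as $V_{1,2}$.

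Next I would apply Theorem \ref{teo: estimacion ptos racionales V} to both varieties. Since $V_{1,2}$ is defined by the same diagonal polynomials in the $k-1$ variables $(X_1,X_3,\ldots,X_k)$ after the substitution $X_2=X_1$, the same hypotheses $m\leq (k-2)/2$ guarantee the estimate
\[
\big||V(\fq)|-q^{k-m}\big|\leq 3^3\cdot 2^{m-2}(3+d_mm)^{k+1}q^{k/2},\qquad \big||V_{1,2}(\fq)|-q^{k-m-1}\big|\leq 3^3\cdot 2^{m-2}(3+d_mm)^{k}q^{(k-1)/2}.
\]
Under the extra condition $m\leq k-3$ the constant $3^3\cdot 2^{m-2}(3+d_mm)^{k+1}$ is dominated by $(3d_mm)^{k+1}$ (and similarly for the $k$-th power). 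Combining these two bounds with the Brun inequality reduces positivity of $N_m(k,\bfs{b})$ to the single inequality
\[
q^{(k-1)/2-m}\bigl(q-\tfrac{k^2}{2}\bigr)>(3d_mm)^k\bigl(3d_mm\,q^{1/2}+\tfrac{k^2}{2}\bigr),
\]
which is exactly \eqref{ec: desigualdad 1 Brun}.

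The last step is purely a numerical verification under the hypotheses $m\leq d_m\leq (k-25)/50$ and $k\leq q^{0.24}$. First, the inequalities $3d_mm\leq k^2\leq q^{0.48}$ and a mild lower bound on $q$ (say $q\geq 106$, which follows from $k$ large enough) force $3d_mm\,q^{1/2}+k^2/2<q-k^2/2$, so it suffices to prove
\[
q^{(k-1)/2-m}\;>\;(3d_mm)^k.
\]
Taking logarithms, this is equivalent to $\bigl(\tfrac{k-1}{2}-m\bigr)\log q> k\log(3d_mm)$. The condition $m\leq (k-25)/50$ gives $\tfrac{k-1}{2}-m\geq \tfrac{12k}{25}$, while $k\leq q^{0.24}$ gives $\log q\geq \tfrac{25}{6}\log k$; together these yield $\bigl(\tfrac{k-1}{2}-m\bigr)\log q\geq 2k\log k$, which beats $k\log(3d_mm)\leq k\log(3(k/50)^2)$ with room to spare.

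I expect the main obstacle to be bookkeeping rather than a conceptual difficulty: making sure that the sieve is applied cleanly (the factor $\binom{k}{2}$ rather than a larger inclusion–exclusion tail), and verifying that Theorem \ref{teo: estimacion ptos racionales V} really applies to $V_{1,2}$ (which reduces to checking the dimension condition $m\leq (k-2)/2$, a consequence of $m\leq (k-25)/50$). Once these are in place, the final inequality falls out of elementary size comparisons between $q$, $k$ and $m$.
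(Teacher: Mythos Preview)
Your proposal is correct and follows the paper's argument essentially step for step: the Brun sieve lower bound $N_m(k,\bfs{b})\geq |V(\fq)|-\binom{k}{2}|V_{1,2}(\fq)|$, the application of Theorem~\ref{teo: estimacion ptos racionales V} to both $V$ and $V_{1,2}$ under $m\leq(k-2)/2$, the simplification of the constant to $(3d_mm)^{k+1}$ when $m\leq k-3$, the reduction to inequality~\eqref{ec: desigualdad 1 Brun}, and finally the passage to \eqref{ec: Brun desigualdad 2} via $3d_mm\,q^{1/2}+k^2<q$. Your logarithmic verification of \eqref{ec: Brun desigualdad 2} is in fact more explicit than the paper's, which simply asserts that the hypotheses suffice.
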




\begin{remark}
We compare Theorem \ref{teo: existencia con Brun} with Theorems \ref{teo: existencia con estimación} and \ref{teo: existencia con estimación 2}. We observe that both results are complementary. The goal of Theorem \ref{teo: existencia con Brun} is to establish a linear relationship between $m$ and $k$. Hence, this theorem allows us to consider a wider range of values for $m$  which ensures that $N_m(k,\bfs{b})>0$. On the other hand, Theorems \ref{teo: existencia con estimación} and \ref{teo: existencia con estimación 2} are better than Theorem \ref{teo: existencia con Brun} because $k \leq 2q^{0,9}-\sqrt{q}+1$ while in Theorem \ref{teo: existencia con Brun},  $k\leq q^{0,24}.$ 
\end{remark}

\section{Moment Subset Sums Problem with $D= \{f(x):\,\,x\in \fq\}$.}
In this section we show how our techniques can we adapted to study the $m$--th moment $k$--SSP when the evaluation set $D$ is the image set of a polynomial $f\in \fq[T]$.

Let $D\subset \mathbb{F}_q$, $m, k\in\mathbb{N}$  such that $1\leq k\leq |D|$. Let $\boldsymbol{b} :=(b_1,\ldots,b_m)\in\mathbb{F}_q^m$, recall that $N_m(k,\bfs{b},D)$ is the number of subsets $S\subset D$ with cardinal $k$ such that for $i=1,\ldots,m$, $\sum_{a\in S}a^i=b_i$.  The problem is to determine whether $N_m(k,\bfs{b}, D)$ is  positive.

Let $q=p^s$ with $p\neq 2$, $p$ does not divide $ 3\cdots (m-1)$ and $m\leq \frac{k-1}{2}$. We consider a polynomial $f\in \fq[T]$ such that $\deg(f)=n$, $f=a_nT^n+\cdots + a_2T^2$ and let $D=\{f(x):\,\,x\in \fq\}$ be the image set of $f$. We define the following set:
$$A:=\{(y_1,\ldots,y_k)\in D^k:\,\sum_{l=1}^k y_l^i=b_i\,\,, 1\leq i \leq m,\,y_h\neq y_l,\, h\neq l\},$$
then we have that $N_m(k,\bfs{b},D)=\frac{|A|}{k!}.$ On the other hand, we define the set $B$ as follows:
$$B:=\{(x_1,\ldots,x_k)\in \fq^k:\,\sum_{l=1}^k f(x_l)^i=b_i\,\,, 1\leq i \leq m,\,x_h\neq x_l,\, \text{and}\, f(x_h)\neq f(x_l),\,\,\, h\neq l\}.$$
We claim that $|A|=|B|$. Let $(y_1,\ldots,y_k)\in A$, there exist $x_i\in\fq$ such that $f(x_i)=y_i$, $1\leq i \leq k$ and, taking into account that $y_i\neq y_j$, we deduce that  $f(x_i)\neq f(x_j)$ and $x_i\neq x_j$. Hence, $(x_1,\ldots,x_k)\in B.$
On the other hand, for a given $(x_1,\ldots,x_k)\in B$, let $y_i=f(x_i)$, $1\leq i \leq k$ since $x_i\neq x_j$ and $f(x_i) \neq f(x_j)$ we conclude that $(y_1,\ldots,y_k)\in A$.
 Now, we shall estimate the number $|B|$. Let $\mathcal{S}$ be the set of $\fq$-rational solutions of the system:
\begin{equation}\label{ sistema con los f}
\left\{ \begin{array}{lcc}
             f(X_1)+f(X_2)+\cdots + f(X_k)=b_1 \\
            \hspace{50px}\vdots\\ f(X_1)^{m}+f(X_2)^{m}+\cdots + f(X_k)^{m}=b_m \\
             \end{array}
   \right.    
\end{equation}
Then we have that
$$B=\mathcal{S}\setminus \{(x_1,\ldots,x_k)\in \mathcal{S}:\,\,f(x_i)=f(x_j),\,\,1\leq i<j\leq k\}.$$
In order to estimate $|\mathcal{S}|$ we shall consider  the following polynomials 
 $g_j\in \fq[X_1, \ldots,X_k]$
 \begin{equation}\label{def: f'i}
 g_j:=f(X_1)^{j}+f(X_2)^{j}+\cdots+f(X_k)^{j}-b_j,\,\,1 \leq j \leq m.
 \end{equation}
Let $V_g:=V(g_1, \ldots,g_m)\subset \A^k$ be the $\fq$--affine variety defined by $g_1, \ldots,g_m$. We shall study some facts concerning the geometry of $V_g$. 
\begin{lemma} \label{lemma: codimension del lugar singula de Vg}
 Let $m\leq \frac{k-1}{2}$, the codimension of the singular locus of $V_g$ is at least two and the ideal $\left(g_1,\ldots,g_m\right)$ is radical.

\end{lemma}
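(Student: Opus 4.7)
The plan is to mimic the proof of Lemma \ref{lemma: codimension del lugar singula de V}, exploiting that $\partial g_j/\partial X_l = j\,f(X_l)^{j-1} f'(X_l)$ presents the Jacobian of $(g_1,\ldots,g_m)$ as a Vandermonde-type matrix in the values $f(X_l)$, with each column scaled by $f'(X_l)$. For $0 \leq i \leq m-1$, let
\[
C_i := \{\bfs{x} \in V_g : \mathrm{rank}(\partial \bfs{g}/\partial \bfs{X})|_{\bfs{x}} = i\}.
\]
The aim is to prove $\dim C_i \leq i$; combined with $\dim V_g \geq k-m$ (from \cite[Chapter 5, Theorem 3.4]{Kunz85}) and $m \leq (k-1)/2$, this yields $\mathrm{codim}\,\mathrm{Sing}(V_g) \geq 2$.

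Fix $\bfs{x} \in C_i$ and, after reordering coordinates, assume the upper-left $i \times i$ submatrix $B_i(\bfs{X})$ is nonsingular at $\bfs{x}$. Pulling $f'(X_l)$ out of column $l$ and $j$ out of row $j$, and applying the standard Vandermonde identity, gives
\[
\det B_i(\bfs{X}) = i!\prod_{l=1}^i f'(X_l)\prod_{1\leq h<l\leq i}\bigl(f(X_l)-f(X_h)\bigr),
\]
so $f'(x_l)\neq 0$ for $l \leq i$ and the values $f(x_1),\ldots,f(x_i)$ are pairwise distinct (using $p \nmid i!$). For each $j \in \{i+1,\ldots,k\}$, the $(i+1)\times(i+1)$ minor $A_j(\bfs{X})$ obtained by adjoining row $i+1$ and column $j$ must vanish at $\bfs{x}$, and the same factorization shows
\[
\det A_j(\bfs{X}) = (i+1)!\,f'(X_j)\prod_{l=1}^i f'(X_l)\prod_{1\leq h<l\leq i}\bigl(f(X_l)-f(X_h)\bigr)\prod_{l=1}^i\bigl(f(X_j)-f(X_l)\bigr).
\]
Defining $Q_j(\bfs{X}) := f'(X_j)\prod_{l=1}^i(f(X_j)-f(X_l))$, we conclude $\bfs{x} \in V(Q_{i+1},\ldots,Q_k)$.

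Under graded lex with $X_k > \cdots > X_1$, the leading term of $f(X_j)-f(X_l)$ (for $l < j$) is $a_n X_j^n$ and that of $f'(X_j)$ is a nonzero scalar multiple of a pure power of $X_j$; hence $\mathrm{LT}(Q_j)$ is a nonzero pure power of $X_j$ and the leading terms $\mathrm{LT}(Q_{i+1}),\ldots,\mathrm{LT}(Q_k)$ are pairwise coprime. Thus $\{Q_{i+1},\ldots,Q_k\}$ is a Gröbner basis of the ideal they generate (\cite[\S 2.9, Proposition 4]{CLO92}) and a regular sequence by \cite[Proposition 15.15]{Eisenbud95}, so $V(Q_{i+1},\ldots,Q_k)$ is a complete intersection of dimension $i$ and $\dim C_i \leq i$. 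For the radicality, we proceed as in Theorem \ref{teo: V es interseccion completa}: in each irreducible component of $V_g$ a point with Jacobian of full rank $m$ must exist, which forces $\dim T_{\bfs{p}} V_g \leq k-m$ and hence pure dimension $k-m$ for $V_g$; so $g_1,\ldots,g_m$ form a regular sequence and Theorem \ref{theorem: eisenbud 18.15} gives that $(g_1,\ldots,g_m)$ is radical.

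The main obstacle is the Vandermonde-style determinant bookkeeping for $B_i$ and $A_j$: the chain rule introduces the column factors $f'(X_l)$ and substitutes the values $f(X_l)$ for the variables, so one must verify that the substitution preserves both the nonvanishing of the diagonal minor and the pure-power structure of the leading terms of $Q_j$. The characteristic hypotheses $p \neq 2$ and $p \nmid 3\cdots (m-1)$ are invoked precisely to keep the factorials $i!$ and $(i+1)!$ nonzero in $\fq$ for $i \leq m-1$; once this is handled, the remainder of the argument is a direct transcription of Lemma \ref{lemma: codimension del lugar singula de V}.
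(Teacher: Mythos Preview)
Your proof is correct and follows essentially the same approach as the paper's: stratify the rank-deficiency locus as $\bigcup_i C_i$, factor the relevant minors via the Vandermonde identity in the values $f(X_l)$ (with column factors $f'(X_l)$), extract the polynomials $Q_j(\bfs{X})=f'(X_j)\prod_{l=1}^i(f(X_j)-f(X_l))$, and use the coprimality of leading terms to conclude $\dim V(Q_{i+1},\ldots,Q_k)=i$. If anything, your write-up is slightly more explicit than the paper's in two places: you spell out the role of the hypothesis $p\nmid 2\cdot 3\cdots(m-1)$ in keeping the factors $i!$ and $(i+1)!$ invertible, and for radicality you correctly import the argument of Theorem~\ref{teo: V es interseccion completa} to first establish that $g_1,\ldots,g_m$ form a regular sequence before invoking Theorem~\ref{theorem: eisenbud 18.15}; the paper's own proof of Lemma~\ref{lemma: codimension del lugar singula de Vg} simply refers back to Lemma~\ref{lemma: codimension del lugar singula de V} at that step.
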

\begin{proof} The proof is similar to the proof of Lemma \ref{lemma: codimension del lugar singula de V}.
 The critical point is to study the dimension of the following set:
\[
\left\{\boldsymbol{x}\in V_g:rank\left(\frac{\partial g}{\partial \boldsymbol{X}}\right)\Big |_{\boldsymbol{x}} <m\right\}=\bigcup_{i=0}^{m-1} C_i,
\]

where  $\left(\frac{\partial g}{\partial \boldsymbol{X}}\right)$ is the Jacobian $(m\times k)$--matrix of $g_1,\ldots,g_m$ with respect to $X_1,\ldots,X_k$, and     $C_i=\left\{\boldsymbol{x}\in V_g: rank\left(\frac{\partial g}{\partial \boldsymbol{X}}\right)\Big |_{\boldsymbol{x}}=i\right\}.$

Given an $\boldsymbol{x}\in C_i$, there is a minor of $\left(\frac{\partial g}{\partial \boldsymbol{X}}\right)|_{\boldsymbol{x}}$  of size $i\times i$ which is not zero. 
More precisely, without loss of generality, we can suppose that
 the $i\times i$--submatrix $B_i(\boldsymbol{X})$ which consists of the first $i$ rows and the first $i$
columns of $\left(\frac{\partial g}{\partial \boldsymbol{X}}\right)$, 
namely, $(B_i(\bfs{X}))_{l,j}=lf(X_j)^{l-1}f'(X_j)$, $1\leq j,l\leq i$,
and  $\det(B_i(\boldsymbol{x}))\neq 0$ holds.
Then $f'(x_l)\neq 0$ for $l=1,\ldots, i$ and $f(x_l)\neq f( x_j)$ if $l\neq j$.

On the other hand, we consider the $(i+1)\times(i+1)$--matrices $A_j(\boldsymbol{X})$ define by
\[A_j(\boldsymbol{X})=
\begin{pmatrix}
    &  & & \vline & f'(X_j)\\
     & B_i(\boldsymbol{X}) & & \vline & \vdots\\
    & & & \vline  & if(X_j)^{i-1}f'(X_j)\\
    \hline
    (i+1)f(X_1)^{i} f'(X_1)& \cdots & (i+1)f(X_i)^{i} f'(X_i) & \vline  & (i+1)f(X_j)^{i} f'(X_j)
\end{pmatrix}
\]
which satisfy that $\det(A_j(\boldsymbol{x}))=0$ for $j=i+1,\ldots,k$. Then, $\boldsymbol{x}$ vanishes in $Q_j= f'(x_j)\prod_{l=1}^i(f(x_j)-f(x_l))$ for $j=i+1,\ldots,k$. Taking $X_k>\cdots > X_1$ for a graded lexicographic order it turns out that $LT(Q_j(\boldsymbol{X}))=a_n^{i+1}n X_j^{n(i+1)-1}$ for $j=i+1,\ldots,k$.
Thus $LT(Q_j(\boldsymbol{X}))$, $i+1\leq j\leq k,$ are relatively prime and therefore, they form a
Gröbner basis of the ideal $J$ that they generate (see, e.g., \cite[\S 2.9, Proposition 4]{CLO92}).

Then
the initial of the ideal $J$ is generated by $LT(Q_{i+1}(\boldsymbol{X})),\ldots,LT(Q_k(\boldsymbol{X}))$, which form a regular sequence
of $\fq[X_1,\ldots,X_k]$. Therefore, by \cite[Proposition 15.15]{Eisenbud95}, the polynomials $Q_{i+1}(\boldsymbol{X}),\ldots,Q_k(\boldsymbol{X})$ form
a regular sequence of $\fq[X_1,\ldots,X_k]$. We conclude that $V(Q_{i+1}(\boldsymbol{X}),\ldots,Q_{k}(\boldsymbol{X}))$ is a set  theoretic complete intersection of $\A^k$ of dimension $i$. 
The rest of the proof is similar to the proof of Lemma \ref{lemma: codimension del lugar singula de V}. 
\end{proof}
In order to study the projective closure of $V_g$ we observe that for $1\leq j \leq m$ there exists $h_j\in \fq[X_0,\ldots,X_k]$ of degree $r_j$ such that 
$$g_j^{h}=a_n(X_1^{nj}+\cdots+ X_k^{nj})+X_0^{nj-r_j}h_j.$$
With the same arguments of Theorems \ref{teo: props de la clausura en el infinito}  and \ref{teo: pcl V interseccion completa}
we conclude that $\mathrm{pcl}(V_g) \subset \Pp^k$ is a complete intersection of dimension $k-m$, which singular locus has codimension at least $2$ and  $\mathrm{pcl}(V_g)\cap \{X_0=0\}\subset \Pp^{k-1}$ is a complete intersection of dimension $k-m-1$ and its singular locus has codimension  at least $2$. Then, we obtain the following estimate on the number of $\fq$-the rational solutions of the system \eqref{ sistema con los f}
$$\left ||\mathcal{S}|-q^{k-m}\right|\leq (3 m^2 n)^{k+1}\cdot q^{\frac{k}{2}}.$$
Now we have to estimate the number of $\fq$-the rational solutions of the system \eqref{ sistema con los f} for which there exists at least $i\neq j$ with $f(x_i)=f(x_j)$. Suppose that $i=1$ and $j=2$. In this case we have to consider the following system
\begin{equation}\label{ sistema con los f dos iguales}
\left\{ \begin{array}{lcc}
             2f(X_1)+f(X_3)+\cdots + f(X_k)=b_1 \\
            \hspace{50px}\vdots\\ 2f(X_1)^{m}+f(X_3)^{m}+\cdots + f(X_k)^{m}=b_m \\
             \end{array}.
   \right.    
\end{equation}
If $\mathcal{S}'$ is the set of $\fq$-rational solutions of \eqref{ sistema con los f dos iguales} with similar arguments as above, we can prove that
$$\left ||\mathcal{S}'|-q^{k-m-1}\right|\leq (3 m^2 n)^{k}\cdot q^{\frac{k-1}{2}}.$$
Now we can obtain a lower bound of $|B|$, in fact, we have that:
$$|B|\geq q^{k-m}-(3m^2 n)^{k+1}\cdot q^{\frac{k}{2}}-\binom{k}{2}(q^{k-m-1}+(3m^2 n)^{k}\cdot q^{\frac{k-1}{2}}).$$

From \eqref{ec: desigualdad 1 Brun} and \eqref{ec: Brun desigualdad 2} with $d_m=m\cdot n$ we obtain the following results.

\begin{theorem} \label{teo: existencia con Brun 2}
    Let $ m\cdot n <\frac{k-25}{50}$ and $k\leq q^{0,24}$
    then $N_m(k,\bfs{b},D)>0$ for all $\bfs{b}\in \fq^m$
    .
\end{theorem}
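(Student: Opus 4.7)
My plan is to reduce the statement to a positivity bound on $|B|$ and then mimic the argument for Theorem \ref{teo: existencia con Brun} with the substitution $d_m \leftarrow m\cdot n$. Since $N_m(k,\bfs{b},D) = |B|/k!$, it suffices to establish $|B|>0$. The inequality
\[
|B| \geq q^{k-m} - (3m^2 n)^{k+1}\, q^{k/2} - \binom{k}{2}\Bigl(q^{k-m-1} + (3m^2 n)^{k}\, q^{(k-1)/2}\Bigr)
\]
has already been derived immediately before the statement, using the estimates coming from the geometry of $V_g$ (Lemma \ref{lemma: codimension del lugar singula de Vg} plus the analog of Theorems \ref{teo: props de la clausura en el infinito} and \ref{teo: pcl V interseccion completa}), together with the Brun sieve applied to the ``collision'' loci $\{f(X_i)=f(X_j)\}$.

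Next I would rewrite the positivity of $|B|$ as the analog of \eqref{ec: desigualdad 1 Brun}, namely
\[
q^{(k-1)/2 - m}\bigl(q - k^2/2\bigr) > (3m^2 n)^{k}\Bigl(3m^2 n\, q^{1/2} + k^2/2\Bigr),
\]
which is exactly \eqref{ec: desigualdad 1 Brun} with $d_m$ replaced by $m\cdot n$. The strategy is then identical to the one in the proof of Theorem \ref{teo: existencia con Brun}: split the inequality into two side conditions. First, under $mn < (k-25)/50$ and $k \leq q^{0.24}$, we have $3m^2 n = 3m(mn) < 3k^2/2500 \leq q^{0.48}$, so for $q$ sufficiently large (the threshold $q\geq 106$ used in the earlier proof suffices a fortiori given $k\leq q^{0.24}$) we get $3m^2 n\, q^{1/2} + k^2 < q$. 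This handles the side factors exactly as before.

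The only remaining task is the dominant inequality, which reduces to \eqref{ec: Brun desigualdad 2} with $d_m = mn$:
\[
q^{(k-1)/2 - m} > (3m^2 n)^{k}.
\]
Taking $\log_q$ on both sides, this is $(k-1)/2 - m > k\log_q(3m^2 n)$; since $mn \leq (k-25)/50$ implies $3m^2 n \leq 3k(mn)/50 \leq 3k(k-25)/2500$ and $k\leq q^{0.24}$, the right--hand side is bounded by roughly $0.48\, k$ while the left--hand side exceeds $(k-1)/2 - (k-25)/50 = 12k/25 + (\text{const})$. The constants in the hypothesis $mn < (k-25)/50$ are calibrated precisely so that the original proof of Theorem \ref{teo: existencia con Brun} goes through verbatim with $d_m = mn$.

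I don't foresee a real obstacle: the geometric input is already in place via Lemma \ref{lemma: codimension del lugar singula de Vg} and the remark on $\mathrm{pcl}(V_g)$, the Brun bound on $|B|$ is displayed just above, and the numerical step is the same as in Theorem \ref{teo: existencia con Brun}. The only mild subtlety to watch is that here the error factor $3m^2 n$ arises as $m\cdot d_m$ with $d_m=mn$, so one must consistently use $d_m\leftarrow m n$ (rather than $d_m\leftarrow n$) when invoking \eqref{ec: desigualdad 1 Brun} and \eqref{ec: Brun desigualdad 2}, which is exactly the identification the authors record in the sentence preceding the statement.
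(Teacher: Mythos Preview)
Your proposal is correct and follows essentially the same approach as the paper: the authors simply state that the result follows from \eqref{ec: desigualdad 1 Brun} and \eqref{ec: Brun desigualdad 2} with the substitution $d_m = m\cdot n$, which is exactly what you carry out. Your identification of the error factor $3m^2n$ as $3m\,d_m$ with $d_m=mn$ is the key point, and you correctly note that the condition $m\leq d_m$ is automatic since $n\geq 1$.
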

 Some families of polynomials for which we can apply Theorem \ref{teo: existencia con Brun 2}.
 \begin{itemize}
     \item If $q\geq \frac{149p}{p-1}$, $f=X^{p}-X^{p-1}$, $|D|=q-\frac{q}{p}$.
     \item If $q\geq 148, \,\,\gcd(n,q-1)+1$, $f=X^{n}$, $|D|= 1+ \frac{q-1}{\gcd(n,q-1)}$.
     \item If $q=2^s$, $s\geq 2$, $n$ an even number, let  $D_n(X,a)$ be the Dickson's polynomial of  degree $n$ and  parameter $a\in \fq$ and  $|D|=\frac{q-1}{2 \gcd(n,q-1)}+\frac{q+1}{2\gcd(n, q+1)}$.
     \item $p>2$, $q=p^s$, $n$ an odd number  with $p|n$ let $D_n(X,a)$ be the Dickson's polynomial of degree $n$ and parameter $a\in \fq$ and  $|D|=\frac{q-1}{2 \gcd(n,q-1)}+\frac{q+1}{2\gcd(n, q+1)}$.
 \end{itemize}

\subsection{Moment subset sums problem when $D=\{x^n: x\in \fq\}$}
In what follows we shall consider the Moment Subset Sums Problem when $D=\{x^n:\,\,x\in \fq\}$, $n$ and $q-1$ are relatively prime. In this case, we can provide an estimate on the number $N_m(k,\bfs{b}, D)$.
Observe that, since $q-1$ and $n$ are relatively prime,  the function $f:\fq\rightarrow \fq\/,\,f(x)=x^n$ is surjective. Therefore $N_m(k,\bfs{b}, D)>0$ if and only if there exists at least one $\fq$-rational solution  with distinct coordinates of the system:
\begin{equation}\label{eq subset sum problem}
\left\{ \begin{array}{lcc}
             a_1^{n}+a_2^{n}+\cdots + a_k^{n}=b_1 \\
            \hspace{50px}\vdots\\ a_1^{mn}+a_2^{mn}+\cdots + a_k^{mn}=b_m .\\
             \end{array}
   \right.    
\end{equation}

Then, we obtain we obtain analogous versions of Theorems \ref{teo: estimación Nm(k,b) p no divide a k} and \ref{teo: estimación Nm(k,b) p divide a k} taking  $d_m=m \cdot n$.

In \cite{Marino2020} the authors obtain an estimate on the number $N_m(k,\bfs{b},D)$ when $D=\{g(x):\,\, x\in \fq\}$ with $g(x)=x^n$ or $g$ is a Dickson's polynomial. More precisely, in the case $g(x)=x^n$, with $\gcd(n,q-1)=1$ ,they have that 
\begin{equation} \label{estimacion marino}
\left |N_m(k,\bfs{b}, D)-\frac{1}{q^m}{q\choose k}\right|\leq \binom{0.013 q +k+\frac{q}{p}}{k}.
   \end{equation} 
This estimate holds if $mn+1<0.013q^{\frac{1}{2}}$.

Observe that Theorems \ref{teo: estimación Nm(k,b) p no divide a k} and \ref{teo: estimación Nm(k,b) p divide a k} improves \eqref{estimacion marino} in several aspects. Indeed, on one hand, we determine one more term in the asymptotic development in terms of $q$, when $p$ divides $k$. On the other hand, if $q>>mn$ then, we obtain a better error term  than the one provided in \eqref{estimacion marino} in the sense that we shall explain. Namely, we prove that $N_m(k,\bfs{b}, D)=\frac{1}{q^m}{q\choose k}+\frac{v(\bfs{b})}{q^m} (-1)^{k+\frac{k}{p}} \binom{ q/p} { k/p  } + \mathcal{O}(p^{\frac{s}{2}k})$ instead of $N_m(k,\bfs{b}, D)= \frac{1}{q^m}{q\choose k}+\mathcal{O}(p^{sk})$.

Theorems  \ref{teo: existencia con estimación} and \ref{teo: existencia con estimación 2}
provide conditions over $m$, $n$, $k$ and $q$ which imply that $N_m(k,\bfs{b}, D)>0$ when the evaluation set is $D=\{x^n: x\in \fq\}$, $d_m=mn$ and $n$ and $q-1$ are relatively prime.
In \cite{Marino2020} the authors provide conditions which imply that $N_m(k,\bfs{b}, D)>0$. More precisely, when the evaluation set is $D=\{x^n: x\in \fq\}$ the authors obtain the following result.
\begin{theorem} \cite[Theorem 8, Corollary 2, Theorem 10]{Marino2020} \label{Teo: existencia marino cojunto evaluacion un monomio} $N_m(k,\bfs{b}, D)>0$ if  any of these conditions hold:
 \begin{itemize}
    \item If $k \leq 3m+1$ and $1 \leq m\leq q-1$.
    \item If  $2n(mn+1)<q^{1/6}$ and $3m+1< k < q^{5/12}$.
    \item If $p>2$, $mn+1 \leq 0.013 q^{1/2}$ and $6m \ln(q) \leq k \leq q/2$.
\end{itemize}
\end{theorem}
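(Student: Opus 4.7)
The plan is to exploit the machinery already developed in Section~5 (moment SSP for $D = \{f(x):x\in\fq\}$) specialized to the monomial $f(T) = T^n$. Writing $a_l = x_l^n$, the condition $N_m(k,\bfs{b},D) > 0$ is equivalent to finding $(x_1,\ldots,x_k) \in \fq^k$ with $x_1^n,\ldots,x_k^n$ pairwise distinct and satisfying the diagonal system
\begin{equation*}
\sum_{l=1}^{k} x_l^{in} = b_i, \qquad 1 \leq i \leq m.
\end{equation*}
Let $V_g \subset \A^k$ denote the affine variety defined by this system. The extension of Theorem~\ref{teo: estimacion ptos racionales V} carried out in the preamble to Theorem~\ref{teo: existencia con Brun 2} (with $d_m = mn$) yields, for $m \leq (k-1)/2$,
\begin{equation*}
\bigl||V_g(\fq)| - q^{k-m}\bigr| \;\leq\; (3m^2 n)^{k+1}\, q^{k/2},
\end{equation*}
and an analogous bound for the subvariety $V_g \cap \{X_1 = X_2\}$. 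This is the geometric input that I would feed into three different combinatorial setups, one for each of the three regimes in the statement.

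For the medium range (regime 2, $3m+1 < k < q^{5/12}$ with $2n(mn+1) < q^{1/6}$), I would run the Brun sieve exactly as in the proof of Theorem~\ref{teo: existencia con Brun 2}, except that the "bad" locus consists of the $\binom{k}{2}$ subvarieties $\{f(X_i) = f(X_j)\}$ rather than $\{X_i = X_j\}$. Because $T^n - c$ has at most $n$ roots in $\fq$, this adds only a polynomial factor of $n$ to the correction, and the main term $q^{k-m}$ continues to dominate the sum of the error terms under the hypothesis $2n(mn+1) < q^{1/6}$; this is an explicit calculation of exactly the same flavour as the derivation of \eqref{ec: Brun desigualdad 2}.

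For the small range (regime 1, $k \leq 3m+1$ with $1 \leq m \leq q-1$) the variety is too low-dimensional for Brun sieve to yield anything, so a direct construction seems necessary. My approach would be to fix $k - m$ of the coordinates arbitrarily inside $D$, turning the system into $m$ power-sum equations in the remaining $m$ unknowns; by Newton's identities (as in the Homogeneous Case subsection), this is equivalent to exhibiting a monic polynomial of degree $m$ over $\fq$ with prescribed elementary symmetric functions and with $m$ distinct roots in $D$ avoiding the fixed coordinates. The hypothesis $k \leq 3m+1$ ensures enough freedom in the choice of the fixed coordinates to make this polynomial separable and to keep its roots inside $D$; this is where the bound $m \leq q-1$ enters.

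The main obstacle is regime~3, $6m \ln q \leq k \leq q/2$ with $p > 2$ and $mn+1 \leq 0.013\, q^{1/2}$. Here both the geometric error $(3m^2 n)^{k+1} q^{k/2}$ and the Brun correction $\binom{k}{2}q^{k-m-1}$ become comparable to the nominal main term $q^{k-m}$, and the crude variety-count approach breaks down. One must instead estimate $N_m(k,\bfs{b},D)$ directly via the character-sum machinery underlying \cite[Thm.~1.3]{Wan2010}, isolating a main term of size $\binom{|D|}{k}/q^m$ and exploiting Weil-type cancellation for the resulting exponential sums; the restriction of the sum to $D = \{x^n\}$ introduces multiplicative characters of order dividing $\gcd(n,q-1)$, and obtaining a square-root saving for the corresponding hybrid sum is, in my view, the technical heart of the proof and the part where our purely geometric toolbox is not sufficient.
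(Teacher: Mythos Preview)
This theorem is not proved in the paper at all: it is quoted verbatim from \cite{Marino2020} (Theorem~8, Corollary~2, Theorem~10 there) and serves only as a benchmark against which the paper's own Theorems~\ref{teo: existencia con estimación}, \ref{teo: existencia con estimación 2} and \ref{teo: existencia con Brun 2} are compared. There is therefore no ``paper's proof'' for your proposal to be measured against; any proof you supply would be a reconstruction of arguments from \cite{Marino2020}, not of anything in the present paper.

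That said, your sketch does not succeed as an independent proof either. For regime~1 ($k\le 3m+1$) your reduction via Newton's identities to finding a degree-$m$ polynomial with distinct roots in $D$ is circular: prescribing the elementary symmetric functions fixes the polynomial completely, so there is no ``freedom in the choice of the fixed coordinates'' left to force separability or membership in $D$. The actual argument in \cite{Marino2020} for small $k$ is algorithmic and specific to the monomial/Dickson structure, not a dimension count. For regime~3 you correctly identify that the variety-count machinery of this paper breaks down and that one needs the Li--Wan sieve with character-sum input; but this is precisely what \cite{Marino2020} does, and you have not supplied the argument. Only your treatment of regime~2 is close in spirit to something the paper could carry out, and even there the constants in the hypothesis $2n(mn+1)<q^{1/6}$ do not obviously match what the Brun inequality \eqref{ec: desigualdad 1 Brun} would give.
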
 
We observe that Theorem \ref{Teo: existencia marino cojunto evaluacion un monomio} cannot guarantee that $N_m(k, \bfs{b}, D)>0$ for $k \in (q^{5/12}, 6m \ln(q))$ when $q$ satisfies $\frac{1}{q^{5/12}}\ln(q)>\frac{1}{6m}$. In contrast, Theorems \ref{teo: existencia con estimación} and  \ref{teo: existencia con estimación 2}
with $d_m=mn$  provide an answer for $m$ under the conditions of that result. 

On the other hand, if 
$q^{0.24} \in (3m+1, q^{5/12})$ and $k\in (3m+1,q^{0.24}]$ Theorem \ref{teo: existencia con Brun 2} let us to consider a wider range of $m$ than Theorem \ref {Teo: existencia marino cojunto evaluacion un monomio} such that $N_m(k,\bfs{b}, D)>0$.  Indeed, from Theorem \ref{teo: existencia con Brun 2}, we can say that $N_m(k,\bfs{b}, D)>0$  if  $m< \frac{k-25}{50n}<\frac{q^{0.24}}{50n}-\frac{25}{50n}$, while, from Theorem \ref{Teo: existencia marino cojunto evaluacion un monomio},  $N_m(k,\bfs{b}, D)>0$ holds if $m<\frac{q^{1/6}}{2n^2}-\frac{1}{n}$.



\section{Appendix A.}
The purpose of this appendix is to collect some results related to  combinatorial numbers which have been necessary in different proofs  of Section \ref{aplicaciones}. 
\begin{remark}\label{cota inferior de combinatorio}
If $k \leq q-\sqrt{q}+1$, then $$\Big(\frac{q}{\sqrt{q}+k-1}\Big)^k \leq \frac{\binom{q}{k}}{(-1)^k\binom{-\sqrt{q}}{k}} \leq q^{k/2}.$$  Indeed, it is well known that 
$$\binom{x}{y}=\frac{\Gamma(x+1)}{\Gamma(y+1)\Gamma(x-y+1)}, \quad \Gamma(x+1)=x\Gamma(x),$$ where $\Gamma$ is the \textit{Gamma function} and $x,y$ are complex numbers. 
Then we deduce that 
\begin{equation*}
    \binom{-\sqrt{q}}{k}=\frac{(-1)^k \sqrt{q}(\sqrt{q}+1)\cdots (\sqrt{q}+k-1)}{k!}.
\end{equation*}
Hence,
\begin{equation}
\frac{\binom{q}{k}}{(-1)^k\binom{-\sqrt{q}}{k}}=\frac{q(q-1) \cdots (q-(k-1))}{(\sqrt{q}+k-1)(\sqrt{q}+k-2) \cdots \sqrt{q} }.
\end{equation}
Then, the bounds of the  statement follows from the above equality and  from the fact that if $k \leq q-\sqrt{q}+1$,  $\frac{q-i}{\sqrt{q}+k-i-1}\geq \frac{q-j}{\sqrt{q}+k-j-1}$ holds for $0\leq j\leq i\leq k-1.$
\end{remark}


In the following remark, we shall provide a comment on the bounds of Theorems \ref{teo: estimación Nm(k,b) p no divide a k} and \ref{teo: estimación Nm(k,b) p  divide a k}.

\begin{remark} \label{Observacion 1}
Let $q \geq 2^{20}$, $m \leq \frac{k}{20}$ and $k\leq 2 q^{0.9}-\sqrt{q}+1$ then, from Remark  \ref{cota inferior de combinatorio}, we have that 
    \begin{equation}
        \frac{\binom{q}{k}}{(-1)^k\binom{-\sqrt{q}}{k}} \geq \frac{q^k}{(\sqrt{q}+k-1)^k} \geq \frac{q^{0.1k}}{2^k} \geq q^m
    \end{equation}
    holds.
    
\end{remark}

\begin{remark}\label{observacion para el caso p divide a k}
Suppose that $p$ divides $k$ and $k>p$. From simple calculations we derive that  
$$ \binom{q/p}{k/p} = \binom{q}{k}\cdot \frac{(k-1)}{(q-1)} \cdots \frac{(k-(p-1))}{(q-(p-1))} \cdot \frac{(k-(k-p+1))}{(q-(k-p+1))} \cdots \frac{(k-(k-1))}{(q-(k-1))}.$$
Since $\frac{k-i}{q-i}>\frac{k-j}{q-j}$ holds for $j>i$, we conclude that
$$\binom{q/p}{k/p} \leq \binom{q}{k} \Big(\frac{k-1}{q-1}\Big)^{2p-2} \leq \binom{q}{k}\Big(\frac{k}{q}\Big)^{2p-2} .$$
\end{remark}

\begin{remark} \label{remark 7.4} Suppose that $p$ divides $k$ and $p\geq 3$. If  $k \leq 2q^{0.9}-\sqrt{q}+1$, $q \geq  2^{21}$ and  $m \leq \frac{k}{20}$, then from Remarks \ref{Observacion 1} and \ref{observacion para el caso p divide a k} and by elementary calculation we can prove that 
\begin{equation*}
\frac{1}{q^m}\Big({q\choose k} +(-1)^{k+k/p} v(\bfs{b}) \binom{q/p}{k/p}) >(-1)^k \binom{-\sqrt{q}}{k}.
\end{equation*}
\end{remark}
Through elementary calculations, the following remark is obtained.
\begin{remark} 
\label{Observacion 2}
If $k<p$ and $m \leq \frac{k-\lfloor \frac{k}{p}\rfloor-1}{2}$ or $k>p$ and $m \leq \frac{k}{2}-\lfloor \frac{k}{p}\rfloor$, we have that $M_1 \cdot (-1)^k \binom{-\sqrt{q}}{k} k!  \geq d_m^m \, 2^{k+2m}\, q^{\frac{2m+1}{2}}$ and $M_1 \cdot (-1)^k \binom{-\sqrt{q}}{k}k!  \geq q^{m+\lfloor \frac{k}{p}\rfloor} \, 2^{k+2m+\lfloor \frac{k}{p}\rfloor+1},$ where $M_1=3^3 2^{m-1} (3+d_m m)^{k+1}.$
\end{remark}

\end{document}